\newcommand{\eps}{\varepsilon}
\newcommand{\al}{\alpha}
\newcommand{\Nat}{\mathbb{N}}
\newcommand{\Real}{\mathbb{R}}
\newcommand{\ex}[1]{\mathbb{E}\left[#1\right]}
\newcommand{\F}{{\mathcal F}}
\newcommand{\M}{{\mathcal M}}
\newcommand{\C}{{\mathcal C}}
\newcommand{\eul}{\mathrm{e}}
\newcommand{\Var}[1]{\mathrm{Var}\left(#1\right)}
\newtheorem{firsttheorem}{Proposition}
\newtheorem{theorem}[firsttheorem]{Theorem}
\newtheorem{lemma}[firsttheorem]{Lemma}
\newtheorem{corollary}[firsttheorem]{Corollary}
\newtheorem{example}[firsttheorem]{Example}
\numberwithin{equation}{section}
\numberwithin{firsttheorem}{section}
\numberwithin{secondtheorem}{section}
\newcounter{parentlemma}
\newcommand{\pr}[1]{\mathrm{Pr}\left[#1\right]}
\newcommand{\bigO}[1]{\mathcal{O}\left(#1\right)}
\newcommand{\iu}{\mathrm{i}\mkern1mu}
\newcommand{\e}[1]{\exp\left\{#1\right\}}
\DeclarePairedDelimiter\floor{\lfloor}{\rfloor}
\begin{document}
\title{Cluster Statistics in Expansive Combinatorial Structures} 
\author{Konstantinos Panagiotou\thanks{Department of Mathematics, Ludwigs-Maximilians-Universit\"at M\"unchen. E-mail: kpanagio@math.lmu.de. Funded by the European Research Council, ERC Grant Agreement 772606-PTRCSP}\, and Leon Ramzews\thanks{Department of Mathematics, Ludwigs-Maximilians-Universit\"at M\"unchen. E-mail: ramzews@math.lmu.de. Funded by the Deutsche Forschungsgemeinschaft (DFG, German Research Foundation), Project PA 2080/3-1.}}
\date{\today}
\maketitle

\begin{abstract}
We develop a simple and unified approach to investigate several aspects of the cluster statistics of random expansive (multi-)sets.
In particular, we  determine the limiting distribution of the size of the smallest and largest clusters, we establish all moments of the distribution of the number of clusters, and we prove a local limit theorem for that distribution.
Our proofs combine effectively two simple ingredients: an application of the saddle-point method through the well-known framework of $H$-admissibility, and an ingenious idea by Erd\H{o}s and Lehner that utilizes the elementary inclusion/exclusion principle.


\end{abstract}

\tableofcontents
\newpage
\section{Introduction \& Main Results}

\paragraph{Sets and Multisets}


In a particular setting of the models that we consider we are given a set $\C$ -- the so-called \emph{clusters} or \emph{components} -- whose elements have each a specific size, that is, we are given in addition a mapping $|\cdot|:\C\to\Nat$ that assigns to each cluster its size.
Then we study objects that are composed out of elements in $\C$ and whose structure can be described in terms of elements in the set
\begin{align*}
    \Omega_n :=
    \Big\{
        (N_1,\dots,N_n)\in\Nat_0^n:
        \sum_{1\le k\le n}kN_k = n
    \Big\}.
\end{align*}
Using this notation, we say that a (compound) object $O$ has cluster structure $(N_1,\dots,N_n) \in \Omega_n$, if there are $N_k$ clusters of size $k$ in $O$ for $1\le k\le n$. Naturally, in that case we say that $O$ has (total) size $n$ and we write $|O| = n$.

The first broad class of models that admit such a description are \emph{sets} or \emph{assemblies} on the label set
$[n] := \{1, 2, \dots , n\}$. This class includes the prominent examples of permutations (where the clusters are cycles),  graphs that have some appropriate property (where the clusters are the connected graphs with that property) and the  Maxwell-Boltzmann model
of ideal gas from statistical physics (where the clusters are particles at some energy level).
Let us write $C_k$ for the number of objects of size $k$ in $\C$, $k\in \Nat$.
Then, in this class of models the number of compound objects (that is, sets) with cluster structure $(N_1, \dots,N_n) \in \Omega_n$ equals
\begin{equation}
\label{eq:setswithclusterstrucure}
    \frac{n!}{\prod_{1\le i \le n}i!^{N_i}} \cdot
    \prod_{1\le i\le n} \frac{C_i^{N_i}}{N_i!}
    = n! \prod_{1\le i\le n} \frac{(C_i/i!)^{N_i}}{N_i!};
\end{equation}
the first term is a multinomial coefficient that partitions the $n$ labels to the $N_i$ objects of size $i$, and the second term selects
a set of $N_i$ objects from the objects of size $i$ in $\C$, for all $1\le i \le n$. Actually, this formula can also be used if we replace $C_i$ with an arbitrary non-negative real number (with no combinatorial interpretation); this is the case, for example, in coagulation-fragmentation processes, where~\eqref{eq:setswithclusterstrucure} is related to the equilibrium state of the system. 
We refer to \cite{Granovsky2013} for an excellent overview and many detailed examples, and to the extensive work~\cite{Joyal1981} and the excellent books~\cite{Flajolet2009,Bergeron1998} that treat the combinatorial settings. 

From now on we assume that the $C_k$'s are non-negative real numbers.
If we sum up~\eqref{eq:setswithclusterstrucure} over all elements in~$\Omega_n$, we obtain the \emph{partition function} of the model, which in the combinatorial setting is just the number of $\C$-sets of total size $n$. Then, a fundamental fact is, see~\cite{Flajolet2009,Bergeron1998}, that if we set $c_k = C_k/k!$, the partition function equals $n!$ times the $n$-th coefficient in a power series $S$ that satisfies the beautiful and simple identity
\begin{align}
    \label{eq:ogf_set}
    C(x) := \sum_{k\ge 1} c_k x^k
    = \sum_{k\ge 1} \frac{C_k}{k!} x^k
    \quad
    \text{and}
    \quad
    S(x) := e^{C(x)}.
\end{align}
A further important class of models are \emph{multisets}, where we may add each cluster several times. This class includes -- most prominently -- partitions of an integer (where the clusters are the natural numbers) and unlabeled objects, for example graphs (where the clusters are unlabeled connected graphs) as well as the Bose-Einstein model of ideal gas. In this class of models the number of compound objects (that is, multisets) with cluster structure $(N_1, \dots,N_n) \in \Omega_n$ equals
\begin{equation}
\label{eq:multisetswithclusterstrucure}
    \prod_{1\le i\le n} \binom{C_i + N_i - 1}{N_i},
\end{equation}
where the binomial coefficient accounts for the number of ways to choose with repetition $N_i$ elements from~$\C_i, 1\le i \le n$. As before,~\eqref{eq:multisetswithclusterstrucure} makes also sense if the $C_k$'s are arbitrary non-negative reals, which happens for example in another model of coagulation-fragmentation processes. Again~\cite{Granovsky2006} is a great source of examples for that matter. Moreover, summing up~\eqref{eq:multisetswithclusterstrucure} over all elements in~$\Omega_n$, we obtain the partition function of that model, which in the combinatorial setting equals the number of $\C$-multisets of total size $n$. Then, if we set $c_k = C_k$, the partition function equals the $n$-th coefficient of a power series $G$ that satisfies the (again, beautiful) identity
\begin{align}
    \label{eq:ogf_mset}
    C(x) := \sum_{k\ge 1}c_k x^k,
    \quad´
    \text{and}
    \quad
    G(x) := \exp\left\{\sum_{j \ge 1} C(x^j)/j \right\}.
\end{align}
If we consider structures as the ones just described, then two questions immediately pop up. First, there is the counting problem: how do the coefficients of $C$ relate to the coefficients of $S$ resp.~$G$, in other words, how many $\C$-(multi-)sets of a given size are there? This question alone is mathematically challenging and interesting, but further it is essential to investigate a second question: what can we say about the ``typical'' cluster structure in a (multi-)set? More specifically, let $\mathsf{S}^{(n)}=(\mathsf{S}^{(n)}_1,\dots,\mathsf{S}^{(n)}_n) \in \Omega_n$ and $\mathsf{G}^{(n)} = (\mathsf{G}^{(n)}_1,\dots,\mathsf{G}^{(n)}_n)\in\Omega_n$ denote the cluster structures of an element drawn uniformly at random from all $\C$-sets and $\C$-multisets of (total) size~$n$.
Denote by $[x^n]F(x)$ the coefficient of $x^n$ in a power series $F$.
By definition and using~\eqref{eq:setswithclusterstrucure} and~\eqref{eq:multisetswithclusterstrucure} we  readily obtain
\begin{align} \label{eq:distribution_cluster_set}
	\pr{\mathsf{S}^{(n)}=(N_1,\dots,N_n)} 
	&= \frac{1}{[x^n]S(x)} \cdot \prod_{1\le i\le n}\frac{c_i^{N_i}}{N_i!} \cdot \mathbbm{1}_{(N_1,\dots,N_n)\in\Omega_n}, \\ \label{eq:distribution_cluster_multiset}
	\pr{\mathsf{G}^{(n)}=(N_1,\dots,N_n)} 
	&= \frac{1}{[x^n]G(x)} \cdot \prod_{1\le i\le n}\binom{c_i+N_i-1}{N_i}
	\cdot \mathbbm{1}_{(N_1,\dots,N_n)\in\Omega_n}.
\end{align}
Let $\mathsf{F}^{(n)} = (\mathsf{F}^{(n)}_1,\dots,\mathsf{F}^{(n)}_n)$ be either $\mathsf{S}^{(n)}$ or $\mathsf{G}^{(n)}$.
Central objects of interest are the distribution of the \emph{total} number of clusters
\[
    \kappa(\mathsf{F}^{(n)}) 
    := \sum_{1\le k\le n} \mathsf{F}_k^{(n)}
\]
and the distribution of the size of the smallest and largest clusters
\begin{align*}
	\M(\mathsf{F}^{(n)}) := \min \left\{1\le k \le n:  \mathsf{F}^{(n)}_k>0 \right\}
	\quad\text{and}\quad 
	\mathcal{L}(\mathsf{F}^{(n)}) := \max \left\{1\le k \le n:  \mathsf{F}^{(n)}_k>0\right\}.
\end{align*}

\paragraph{State of the art} Let $h:[1,\infty) \to [0,\infty)$ be an eventually positive, continuous and slowly varying function, which means that
\[
    \lim_{x\to\infty} \frac{h(\lambda x)}{h(x)} = 1, \quad \text{for }\lambda>0.    
\]
Then a quite common and rather general assumption in the study of (multi-)sets is that the counting sequence for $\C$ fulfils
\begin{equation}
\label{eq:cn}
	c_n = h(n)\cdot n^{\al-1}\cdot \rho^{-n},\quad \al\in\Real, ~ 0<\rho\le1.
\end{equation}
Under this assumption the prominent problem of determining $[x^n]S(x)$ and $[x^n]G(x)$ is treated in several papers. The state of the art is:~\cite{Stufler2020,Stufler2018} treat the \emph{convergent} case $\al<0$,~\cite{Arratia2003} the \emph{logarithmic} case $\al=0$, \cite{Granovsky2006,Freiman2002} the \emph{expansive} case $\al>0$ and~\cite{Granovsky2008} a specific ``quasi-expansive'' class of sequences that satisfy an analytical set of conditions implying $\al>0$ and $\rho=1$, the \emph{Meinardus scheme of conditions}. We see that from today's viewpoint the partition functions are pretty well understood. 

Regarding the statistical properties of (multi-)sets the picture in the convergent case ($\alpha < 0$) is also rather complete. It is known that the number $\kappa(\mathsf{F}^{(n)})$ of components converges (without scaling) in distribution, see~\cite{Bell2000}, and the tails are determined completely in~\cite{Panagiotou2020}. In this case, removing a -- the --  largest cluster from $\mathsf{F}^{(n)}$ leaves an object that converges in distribution  to a limit given by the Boltzmann model, cf.~\cite{Stufler2020,Stufler2018,Barbour2005}, implying that $n - \mathcal{L}(\mathsf{F}^{(n)})$ has a limiting distribution.

The expansive case, on the other side, is quite different from that. In general, $\kappa(\mathsf{F}^{(n)})$ is unbounded with high probability in this case. 
This is already reflected in the special case of integer partitions, which are multisets of natural numbers;  the classical work by~\citet{Erdoes1941} determines the proper normalization so that the number of parts in a random integer partition converges in distribution.
However, in the general expansive setting, the precise picture is not completely understood and rather fragmented. 
More specifically, the number of $\C$-multisets of size $n$ and $N$ clusters, which is obviously related to $\Pr[\kappa(\mathsf{G}^{(n)}) = N]$, was studied in~\cite{Panagiotou2022} when $\rho < 1$.
The distribution of $\kappa(\mathsf{G}^{(n)})$ was studied in the quasi-expansive case:~\cite{Stark2021} describes parts of the distribution, and~\cite{Mutafchiev2011} finds the proper normalization leading to a central limit theorem, which is Gaussian only for parts of the parameter range. Determining the proper scaling for $\kappa(\mathsf{G}^{(n)})$ in the expansive case is still an open problem.
Regarding sets, the proper normalization that yields a local and central Gaussian limit theorem for $\kappa(\mathsf{S}^{(n)})$ is established in~\cite{Erlihson2004} in the expansive case and when $h(n)$ is constant; treating general $h$ has not been achieved so far.


There are also many results about the smallest and largest clusters of $\mathsf{F}^{(n)}$. Here integer partitions are also well-understood, as the representation of integer partitions into Ferres diagrams shows that the distribution of the number of clusters and of the largest clusters are identical.
In the case of multisets, \citet{Mutafchiev2013} establishes that after appropriate normalization $\mathcal{L}(\mathsf{G}^{(n)})$ converges to an extreme value distribution in the quasi-expansive case. Regarding the expansive case we are not aware of any detailed results concerning $\mathcal{M}(\mathsf{G}^{(n)})$ and $\mathcal{L}(\mathsf{G}^{(n)})$.
For the case of $\C$-sets,~\cite{Freiman2005} proved that there is a threshold $n^{1/(\al+1)}$ for $\mathcal{L}(\mathsf{S}^{(n)})$, meaning that the probability of the event $\{\mathcal{L}(\mathsf{S}^{(n)}) \le n^{\beta}\}$ tends to $0$ if $\beta<1/(\al+1)$ and to $1$ if $\beta>1/(\al+1)$. In their work also the limiting distribution of the minimal cluster size is determined.  still unknown.

\paragraph{Our contribution} In this paper we perform a thorough study of the expansive case, that is, we make an assumption that is at least as general as~\eqref{eq:cn} with $\alpha > 0$ -- see below -- and we derive a rather complete picture: we determine the limiting distribution of the size of the smallest and largest clusters, we establish all moments of the distribution of the number of clusters, and we prove a local limit theorem for that distribution for both $\C$-sets and $\C$-multisets. 

On the methodological side, our findings are based on a simple, yet far-reaching observation, that somehow has escaped notice so far. The proofs in the state of the art works, for example~\cite{Freiman2002,Granovsky2006}, are conducted by studying the partition functions $[x^n]S(x)$ and $[x^n]G(x)$ with the help of the powerful \emph{Khinchin's probabilistic method},  see~\cite{Freiman2005} for a great overview and some historical perspective, to reduce the analytical problem to a probabilistic one; but \emph{in heart} the authors apply the well-known saddle-point method to the Cauchy integral representing the coefficients of the series at hand.
For that they basically prove that $S(x)$ and $G(x)$ possess a property called \emph{$H$-admissibility}, see Section~\ref{sec:H-admissibility}, but without calling it that way. Now, the great advantage in realising that a series $F$ is $H$-admissible is that by existing results one is able to compute/estimate $[x^n]F(x)$ and $[x^{n-k}]F(x)/[x^n]F(x)$ systematically for virtually \emph{any} $0<k\le n$ as $n\to\infty$.
This has powerful consequences, as the studies of the cluster statistics always rely at some point on determining such a fraction. So, knowing that~$S$,~$G$ -- and for that effect, many related series that we construct explicitly -- are $H$-admissible opens us the path for obtaining incredibly detailed results. In particular, by combining our observation with the elementary methods developed in~\cite{Erdoes1941} in the context of integer partitions, we are able to find the proper scaling under which  $\mathcal{L}(\mathsf{S}^{(n)})$, $\mathcal{L}(\mathsf{G}^{(n)})$ and  $\mathcal{M}(\mathsf{S}^{(n)})$, $\mathcal{M}(\mathsf{G}^{(n)})$, 
converge and we identify the limit.

\paragraph{Plan of the paper}
In the remainder of this section we present our main results. In Section~\ref{sec:H-admissibility} we recall from the existing literature the concept of $H$-admissibility, where we also present the main tool,  Lemma~\ref{lem:hayman}, that provides  coefficient extraction results for $H$-admissible functions. Section~\ref{sec:proofs} contains all  proofs. There we establish in Section~\ref{sec:proof_h_admissibility} the result that $S,G$ and some  related and directly relevant power series are indeed $H$-admissible. Subsequently, the main results are proven in Section~\ref{sec:proof_main}.

\subsection{Main Results}
\label{sec:main_results}

This section is structured as follows. We begin with introducing \emph{oscillating expansive} sequences, that constitute a setting more general than~\eqref{eq:cn} when $\alpha > 0$. In our first result Theorem~\ref{thm:coeff_eC(x)_C(x)^k}, which is of \emph{technical} nature, we show that we can extract the coefficients of several generating series related to $S(x)$ and $G(x)$. 
As a first application we determine in Theorem~\ref{thm:largest_comp_hayman} and Corollary~\ref{coro:smallest_comp_hayman} the distribution of the largest and smallest components in (multi-)sets drawn uniformly at random.
Moreover, as a rather straightforward consequence of Theorem~\ref{thm:coeff_eC(x)_C(x)^k}, we determine the moments of the number of components in uniform \hbox{(multi-)}sets in Corollary~\ref{coro:mean_sec_moment_hayman}. 
Finally, we establish a local limit law for the number of components in Theorem~\ref{thm:LLT_number_comp_hayman}, a fact whose proof utilizes the aforementioned enumeration results~\cite{Panagiotou2022} and the acquainted knowledge about the moments of the distribution. 

We will use the following notation. A non-negative sequence $(c_k)_{k\in\Nat}$ is said to be in the set $\F(\al_1,\al_2,\rho)$ for $0<\al_1<\al_2$ and $\rho>0$ if there are constants $0<A_1<A_2$ such that for all sufficiently large $n$ 
\begin{align}
    \label{eq:oscilatting_expansive}
	A_1\cdot n^{\al_1-1}\cdot \rho^{-n} \le c_n \le A_2\cdot n^{\al_2-1} \cdot \rho^{-n}.
\end{align}
Such sequences are referred to as \emph{oscillating expansive} with parameters $\al_1,\al_2,\rho$. 
This setting is rather broad, and it also includes models that are not expansive in the sense that
\[
	c_n
	= h(n)\cdot n^{\al-1}\cdot \rho^{-n},
	\quad h\text{ is eventually positive and slowly varying},\,\al>0,\,0<\rho\le1,\,n\in\Nat,
\]
for example ideal gases; we refer for a general introduction and examples to see~\cite{Granovsky2013}.

\paragraph{Remark: Real-valued sequences.} As we already remarked, our upcoming results all hold for arbitrary non-negative real-valued sequences $(c_k)_{k\in\Nat}$ that do not necessarily have a combinatorial interpretation. This is possible since $S(x)$ and $G(x)$ can be directly defined by~\eqref{eq:ogf_set} and~\eqref{eq:ogf_mset} without being seen as the generating series of sets and multisets. Moreover, by viewing the measures in~\eqref{eq:distribution_cluster_set} and~\eqref{eq:distribution_cluster_multiset} as the \emph{definition} of $\mathsf{S}^{(n)}$ and $\mathsf{G}^{(n)}$, these random variables can be investigated detached from uniform sets and multisets.
Nevertheless, in the rest of the paper, whenever we talk about the ``set'' setting, we always refer to $S(x)$ and $\mathsf{S}^{(n)}$ and similarly, when we talk about ``multisets'', we will refer to $G(x)$ and $\mathsf{G}^{(n)}$.

\paragraph{Coefficient Extraction and Counting}

In our first main result we determine asymptotically the coefficients of $S,G$ and a wide class of related power series in the oscillating expansive setting. 
\begin{theorem}
\label{thm:coeff_eC(x)_C(x)^k}
For $\al>0$, $0<\rho\le 1$ and $0<\eps<\al/3$ let $(c_k)_{k\in\Nat}\in\F(2\al/3+\eps,\al,\rho)$. Let $z_n$ be the solution to $z_nC'(z_n)=n$. Then, for $\ell\in\Nat_0$ and as $n\to\infty$,
\begin{align}
    \label{eq:coeff_egf_set}
    [x^n]S(x)\cdot C(x)^\ell
    \sim \frac{S(z_n)C(z_n)^\ell}{\sqrt{2\pi C''(z_n)}} \cdot z_n^{-n-1}.
\end{align}
Let $\ell\in\Nat_0, (p_1,\dots,p_\ell)\in\Nat_0^\ell$. If $0<\rho<1$, then as $n\to\infty$,
\begin{align}
    \label{eq:coeff_ogf_muliset_rho<1}
    [x^n]G(x) \prod_{1\le i\le \ell}\sum\nolimits_{j\ge 1}j^{p_i}C(x^j)
    \sim \e{\sum\nolimits_{j\ge 2}C(\rho^j)/j} 
    \cdot  \frac{S(z_n)C(z_n)^\ell}{\sqrt{2\pi C''(z_n)}} \cdot z_n^{-n-1}.
\end{align}
If $\rho=1$, let $q_n$ be the solution to $\sum_{j\ge 1} q_n^jC'(q_n^j) = n$. Then, as $n\to\infty$,
\begin{align}
    \label{eq:coeff_ogf_muliset_rho=1}
    [x^n]G(x) \prod_{1\le i\le \ell}\sum\nolimits_{j\ge 1}j^{p_i}C(x^j)
    \sim  \frac{G(q_n) \prod_{1\le i\le \ell}\sum\nolimits_{j\ge 1}j^{p_i}C(q_n^j)}{\sqrt{2\pi \sum\nolimits_{j\ge 1}jq_n^{2j}C''(q_n^j)}} \cdot q_n^{-n}.
\end{align}
\end{theorem}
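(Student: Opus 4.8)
\emph{Strategy.} All three estimates are applications of the saddle point method, accessed through the $H$-admissibility of $S$ (in the set case and the multiset case with $\rho<1$) and of $G$ (in the multiset case with $\rho=1$), both established in Section~\ref{sec:proof_h_admissibility}, together with the coefficient-extraction results of Lemma~\ref{lem:hayman}. In each case I would first read off the base asymptotics of $[x^n]S(x)$ or $[x^n]G(x)$, recognising $z_n$ (resp.\ $q_n$) as the relevant Hayman center and simplifying the Hayman width, and then incorporate the extra power-series factors --- $C(x)^\ell$, the products $\prod_i\sum_{j\ge1}j^{p_i}C(x^j)$, and, when $\rho<1$, the analytic correction $\e{\sum_{j\ge2}C(x^j)/j}$ --- through the following mechanism: if $F$ is $H$-admissible with Hayman centers $r_n$ and $g(x)=\sum_k g_kx^k$ has non-negative coefficients, the same radius of convergence $r$ as $F$, and $g_k$ of order $k^\beta r^{-k}$ for some $\beta>-1$, then writing $[x^n]F(x)g(x)=\sum_{k\ge0}g_k\,[x^{n-k}]F(x)$ and splitting at a threshold $K_n$ with $(r-r_n)^{-1}=o(K_n)$ and $K_n=o(\sqrt{b_F(r_n)})$ (where $b_F$ is the Hayman width), the head is $\sim[x^n]F(x)\sum_{k\le K_n}g_kr_n^k$ by Lemma~\ref{lem:hayman} and the tail is an incomplete-Gamma remainder of $g(r_n)$, so that $[x^n]F(x)g(x)\sim g(r_n)[x^n]F(x)$. (Here $z_n$ and $q_n$ exist uniquely for large $n$ because $zC'(z)$ and $\sum_{j\ge1}q^jC'(q^j)$ are continuous and increase to $+\infty$ on $(0,\rho)$, using $c_k>0$ for large $k$ by \eqref{eq:oscilatting_expansive}.)

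\emph{The set case \eqref{eq:coeff_egf_set}.} Since $S=\e{C}$ one has $zS'(z)/S(z)=zC'(z)$, so the Hayman center of $S$ at order $n$ is precisely $z_n$, and Lemma~\ref{lem:hayman} gives $[x^n]S(x)\sim\frac{S(z_n)z_n^{-n}}{\sqrt{2\pi b_S(z_n)}}$ with $b_S(z)=z^2C''(z)+zC'(z)$. A soft step shows $b_S(z_n)\sim z_n^2C''(z_n)$: the probability measure on $\Nat$ proportional to $kc_kz_n^k$ has mean $1+z_nC''(z_n)/C'(z_n)$, and since $\sum_k kc_k\rho^k=\infty$ this measure escapes every fixed window as $z_n\to\rho$, so $C''(z_n)\gg C'(z_n)$ and hence $n=z_nC'(z_n)=o(C''(z_n))$. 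This already yields \eqref{eq:coeff_egf_set} for $\ell=0$, and the mechanism above with $F=S$ and $g=C^\ell$ (whose $k$-th coefficient is, up to the oscillation window, of order $k^{\ell\al-1}\rho^{-k}$, a convolution of $\ell$ copies of a sequence of order $k^{\al-1}\rho^{-k}$) gives $[x^n]S(x)C(x)^\ell\sim C(z_n)^\ell[x^n]S(x)$, i.e.\ \eqref{eq:coeff_egf_set}.

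\emph{The multiset cases \eqref{eq:coeff_ogf_muliset_rho<1} and \eqref{eq:coeff_ogf_muliset_rho=1}.} When $\rho<1$, the key observation is the factorisation $G(x)=S(x)\Phi(x)$ with $\Phi(x)=\e{\sum_{j\ge2}C(x^j)/j}$: each $C(x^j)$ with $j\ge2$ has radius $\rho^{1/j}\ge\rho^{1/2}>\rho$, so $\Phi$, and likewise every $\sum_{j\ge2}j^pC(x^j)$, is analytic on a disc strictly containing $[0,\rho]$. Hence $\sum_{j\ge1}j^pC(x^j)=C(x)+O(1)$ as $x\to\rho^-$, so $\prod_{1\le i\le\ell}\sum_{j\ge1}j^{p_i}C(x^j)\sim C(x)^\ell$ and $\Phi(x)\prod_i\sum_{j\ge1}j^{p_i}C(x^j)\sim\Phi(\rho)C(x)^\ell=\e{\sum_{j\ge2}C(\rho^j)/j}\,C(x)^\ell$; applying the mechanism with $F=S$ and $g=\Phi\cdot\prod_i\sum_{j\ge1}j^{p_i}C(x^j)$ (a non-negative series of radius $\rho$) gives \eqref{eq:coeff_ogf_muliset_rho<1}. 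When $\rho=1$ this factorisation is unavailable since every $C(x^j)$ is singular at $x=1$, so I would use the $H$-admissibility of $G=\e{D}$, $D(x)=\sum_{j\ge1}C(x^j)/j$, directly. From $xD'(x)=\sum_{j\ge1}x^jC'(x^j)$ the Hayman center of $G$ at order $n$ is $q_n$, and Lemma~\ref{lem:hayman} gives $[x^n]G(x)\sim\frac{G(q_n)q_n^{-n}}{\sqrt{2\pi b_G(q_n)}}$ with $b_G(q)=\sum_{j\ge1}jq^jC'(q^j)+\sum_{j\ge1}jq^{2j}C''(q^j)$; as $q_n\to1$ the same measure-escapes argument gives $C''(q_n)\gg C'(q_n)$, so $\sum_jjq_n^jC'(q_n^j)$ has order $n$ while the second sum is of strictly larger order, whence $b_G(q_n)\sim\sum_{j\ge1}jq_n^{2j}C''(q_n^j)$. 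Finally the mechanism with $F=G$ and $g=\prod_i\sum_{j\ge1}j^{p_i}C(x^j)$ (now a genuine radius-$1$ series, coefficients of polynomial order) yields \eqref{eq:coeff_ogf_muliset_rho=1}.

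\emph{Main obstacle.} With the $H$-admissibility of $S$ and $G$ in hand, identifying the saddle points and running the head/tail mechanism are routine; the delicate point is to guarantee that the two scales in the split are genuinely separated, i.e.\ $(\rho-z_n)^{-1}=o(\sqrt{b_S(z_n)})$ (resp.\ $(1-q_n)^{-1}=o(\sqrt{b_G(q_n)})$), together with the uniform control of the coefficients of $C(x)^\ell$, $\prod_i\sum_{j\ge1}j^{p_i}C(x^j)$ and $\Phi(x)$ over the oscillation window. Unwinding the bounds on $C,C',C''$ implied by \eqref{eq:oscilatting_expansive}, these amount to an inequality comparing $\al_1$ and $\al_2$ (of the form $\al_1(\al_1+3)>2\al_2$; informally, $\al_1$ may not be much smaller than $\al_2$), which is exactly what the hypothesis $(c_k)_{k\in\Nat}\in\F(2\al/3+\eps,\al,\rho)$ with $0<\eps<\al/3$ secures --- and it is also the regime in which $S$ and $G$ are shown to be $H$-admissible in Section~\ref{sec:proof_h_admissibility}, which is why the lower exponent $2\al/3$ is essential.
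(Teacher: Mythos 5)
Your proposal takes a genuinely different route from the paper, and it is in essence correct.

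The paper proves $H$-admissibility of the \emph{entire products} $S(x)C(x)^\ell$ and $G(x)\prod_{i}\sum_{j\ge1}j^{p_i}C(x^j)$ (Lemma~\ref{lem:h_admissible_gen_series}) and then applies Lemma~\ref{lem:hayman} directly to those products, evaluated at the nearby point $z_n$ (resp.\ $q_n$). Since $z_n$ is the Hayman center of $S$ but not of $S\cdot C^\ell$, the paper must verify that the residual Gaussian factor $\exp\{-(a_\ell(z_n)-n)^2/(2b_\ell(z_n))\}$ is $1+o(1)$, which follows from $a_\ell(z_n)-n=\ell A_1(z_n)/A_0(z_n)=\Theta(\eta_n^{-1})$ and $b_\ell(z_n)=\Theta(\eta_n^{-(\al+2)})$ up to a slowly varying factor, so the exponent is $O(\eta_n^\al)=o(1)$; a similar check is done in the multiset case via Lemma~\ref{lem:A_s_t_properties_hayman_proof}. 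You instead use $H$-admissibility of $S$ (resp.\ $G$) alone and fold the extra factor $g$ in through the convolution $[x^n]F\cdot g=\sum_k g_k[x^{n-k}]F$, splitting at a threshold $K_n$ and using the ratio property of $H$-admissible functions from Lemma~\ref{lem:hayman}. Both work. The paper's route is more economical given that Lemma~\ref{lem:h_admissible_gen_series} is proved in full generality anyway (and is needed elsewhere, e.g.\ in Corollary~\ref{coro:smallest_comp_hayman}); your route minimizes the admissibility checking to the $\ell=0$ case and is closer in spirit to the paper's own treatment of Theorem~\ref{thm:largest_comp_hayman}, where exactly this kind of ratio-plus-convolution argument (\eqref{eq:set_cluster_lower_bound_1}--\eqref{eq:set_cluster_lower_bound_2}) appears. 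Your factorisation $G=S\cdot\Phi$ for $\rho<1$ is the same analytic mechanism the paper captures via $A_{s,t}(r)=A_s(r)+O(1)$ in \eqref{eq:A_s_t_0<rho<1_proof_hayman}, just repackaged.

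One point you should be careful to spell out if you write this up: for the tail $\sum_{k>K_n}g_k[x^{n-k}]F(x)$ the trivial Cauchy bound $[x^{n-k}]F\le F(r_n)r_n^{-(n-k)}$ is too crude (it loses a factor $\sqrt{b_F(r_n)}$ relative to $[x^n]F$); you need the uniform-in-$k$ consequence of Lemma~\ref{lem:hayman}, namely $[x^{n-k}]F(x)\le(1+o(1))[x^n]F(x)\,r_n^k$, which holds because $\sup_m|\eps_m|\to0$. With that bound in place your stated threshold condition $(\rho-r_n)^{-1}=o(K_n)$ does suffice, since it makes the incomplete-Gamma tail of $g(r_n)$ into $o(g(r_n))$. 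Also, your ``soft step'' identifying $b_S(z_n)\sim z_n^2C''(z_n)$ via the escaping probability measure is a nice elementary substitute for the paper's appeal to the Karamata/oscillating-bound estimates in Corollary~\ref{lem:A_s_properties_hayman_proof}, and it is correct.
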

The proof is in Section~\ref{sec:coro:coeff_eC(x)_C(x)^k}.
A natural application of Theorem~\ref{thm:coeff_eC(x)_C(x)^k} is solving the counting problem, that is, determining the numbers $s_n/n!:=[x^n]S(x)$ and $g_n:=[x^n]G(x)$ of $\C$-sets and $\C$-multisets of size $n$ asymptotically. That is, if $(c_k)_{k\in\Nat}\in\F(2\al/3+\eps,\al,\rho)$ for $\al>0,0<\eps<\al/3$ and $0<\rho\le 1$ we recover (though, due to the notational discrepancy this is not immediately obvious) streamlined versions of the results in~\cite{Freiman2002,Granovsky2006} 
\[
    s_n
    \sim \frac{S(z_n)}{\sqrt{2\pi z_n^2C''(z_n)}} \cdot z_n^{-n} \cdot n!, \quad z_n\text{ solves }z_nC'(z_n)=n
\]
and
\begin{align*}
    g_n
    \sim \begin{dcases*}
        \exp\bigg\{\sum_{j\ge 2}C(\rho^j)/j\bigg\} \frac{\e{C(z_n)}}{\sqrt{2\pi z_n^2C''(z_n)}} \cdot z_n^{-n}, &$0<\rho<1$ and $z_n$ solves $z_nC'(z_n)=n$ \\
        \frac{G(q_n)}{\sqrt{2\pi \sum_{j\ge 1}jq_n^{2j}C''(q_n^j)}}\cdot q_n^{-n} , &$\rho = 1$ and $q_n$ solves $\sum_{j\ge 1}q_n^jC'(q_n^j) = n$
    \end{dcases*}.
\end{align*}

\paragraph{The Distribution of the Largest and Smallest Clusters}
Theorem~\ref{thm:coeff_eC(x)_C(x)^k} is not only helpful to obtain counting results, but it can also be applied to get fine grained information about the cluster statistics of random (multi-)sets. Let $\mathsf{S}^{(n)}$ and $\mathsf{G}^{(n)}$ be the random cluster structures from~\eqref{eq:distribution_cluster_set} and~\eqref{eq:distribution_cluster_multiset}, respectively.
The next statements are concerned with the distribution of the extreme (in both directions of the spectrum) cluster sizes in $\mathsf{S}^{(n)}$ and $\mathsf{G}^{(n)}$. Recall that for $\mathsf{F}^{(n)}\in\{\mathsf{S}^{(n)},\mathsf{G}^{(n)}\}$
\[
	\M(\mathsf{F}^{(n)}) 
	:= \min\left\{1\le k\le n:\mathsf{F}^{(n)}_k>0\right\}
	\quad \text{and}\quad 
	\mathcal{L}(\mathsf{F}^{(n)})
	:= \max\left\{1\le k\le n:\mathsf{F}^{(n)}_k>0\right\}.
\]	
First we treat the largest clusters. \citet{Freiman2005} show that if $(c_k)_{k\in\Nat}$ is expansive, then the coarse threshold for the size of the largest cluster in the $\C$-set $\mathsf{S}^{(n)}$ is given by $n^{1/(\al+1)}$, in the sense that
\begin{align*}
    \lim_{n\to\infty}\pr{\mathcal{L}(\mathsf{S}^{(n)}) \le n^\beta}
    = \begin{dcases*}
    0, &$\beta<1/(\al+1)$ \\
    1, &$\beta>1/(\al+1)$
    \end{dcases*}.
\end{align*}
However, the question about the actual order of magnitude, and even more, the limiting distribution, remained an open problem. \citet{Mutafchiev2013} treated the $\C$-multiset $\mathsf{G}^{(n)}$ in a specific setting -- under the aforementioned Meinardus scheme of conditions -- that in addition to several assumptions requires $\rho=1$. He established that for specific functions functions $f(n),g(n)$
\[
	\lim_{n\to\infty} \pr{\mathcal{L}(\mathsf{G}^{(n)}) \le g(n) + t f(n)}
	= \eul^{-\eul^{-t}}, \quad t\in\Real,
\]
a fact that was known for integer partitions from the classical work~\cite{Erdoes1941}. In our next result we show that, after appropriate normalization, this behavior is universal for expansive (multi-)sets.
\begin{theorem}
\label{thm:largest_comp_hayman}
Let $(c_k)_{k\in\Nat}$ be expansive. For $t\in\Real$ and $\beta>0$ set
\[
    s(t,\beta)
    := \beta^{-1} \big(\ln X + t\big),
    \quad \text{where} \quad 
    X = \Gamma(\al)^{-1} C(\rho\eul^{-\beta}) (\ln C(\rho\eul^{-\beta}))^{\al-1} \frac{h(\beta^{-1}\ln C(\rho\eul^{-\beta}))}{h(\beta^{-1})}.
\]
Let $z_n=\rho\eul^{-\eta_n}$ be the solution to $z_nC'(z_n) = n$ and $q_n=\rho\eul^{-\xi_n}$ the solution to $\sum_{j\ge 1}q_n^jC'(q_n^j)=n$.
Then
\begin{align*}
    \lim_{n\to\infty}\pr{\mathcal{L}(\mathsf{F}^{(n)})\le s(t,\beta_n)} 
    = \eul^{-\eul^{-t}},
     \text{ where }\beta_n =\begin{dcases*}
    \eta_n, &$\mathsf{F}^{(n)}=\mathsf{S}^{(n)}\text{ or }\mathsf{F}^{(n)}=\mathsf{G}^{(n)},0<\rho<1$ \\
    \xi_n, &$\mathsf{F}^{(n)}=\mathsf{G}^{(n)},\rho=1$
    \end{dcases*},
    \quad t\in\Real.
\end{align*}
\end{theorem}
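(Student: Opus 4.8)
The plan is to follow the Erdős--Lehner inclusion/exclusion philosophy, converting the event $\{\mathcal{L}(\mathsf{F}^{(n)})\le s\}$ into a ratio of coefficients of modified generating series that Theorem~\ref{thm:coeff_eC(x)_C(x)^k} lets us evaluate. Write $s=s_n:=s(t,\beta_n)$ and observe that $\{\mathcal{L}(\mathsf{F}^{(n)})\le s\}$ is precisely the event that no cluster of size exceeding $s$ appears. For the set case, \eqref{eq:distribution_cluster_set} gives
\[
    \pr{\mathcal{L}(\mathsf{S}^{(n)})\le s}
    = \frac{[x^n]\exp\{C_{\le s}(x)\}}{[x^n]S(x)},
    \qquad C_{\le s}(x):=\sum_{1\le k\le s}c_kx^k,
\]
and similarly for multisets one divides $[x^n]\exp\{\sum_{j\ge1}C_{\le s}(x^j)/j\}$ by $[x^n]G(x)$. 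The first step is therefore to expand $\exp\{C_{\le s}(x)\}=S(x)\exp\{-C_{>s}(x)\}=S(x)\sum_{m\ge 0}(-1)^m C_{>s}(x)^m/m!$, where $C_{>s}:=C-C_{\le s}$, so that
\[
    \pr{\mathcal{L}(\mathsf{S}^{(n)})\le s}
    = \sum_{m\ge 0}\frac{(-1)^m}{m!}\cdot\frac{[x^n]\,S(x)\,C_{>s}(x)^m}{[x^n]S(x)}.
\]
The key point is that $C_{>s}(x)^m$ is a polynomial-type tail whose contribution, when $s=s_n$ is calibrated correctly, makes the $m$-th summand converge to $\lambda^m/m!$ with $\lambda=\eul^{-t}$, producing $\sum_m(-\lambda)^m/m!=\eul^{-\eul^{-t}}$ in the limit.

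The second step is to make this precise by truncating: for each fixed $m$, $C_{>s}(x)^m$ is a finite (in the relevant range, bounded-degree contributions dominate) combination of monomials $x^{k_1+\dots+k_m}$ with all $k_i>s$, and we need
\[
    \frac{[x^n]S(x)C_{>s}(x)^m}{[x^n]S(x)}
    = \sum_{k_1,\dots,k_m>s} c_{k_1}\cdots c_{k_m}\,\frac{[x^{n-k_1-\dots-k_m}]S(x)}{[x^n]S(x)}\;\longrightarrow\;\lambda^m.
\]
Here Theorem~\ref{thm:coeff_eC(x)_C(x)^k} with $\ell=0$ enters through the ratio $[x^{n-r}]S(x)/[x^n]S(x)$: using the saddle point $z_n=\rho\eul^{-\eta_n}$ and $H$-admissibility, this ratio is asymptotically $z_n^{r}$ times lower-order corrections, uniformly for $r$ in the relevant range (the terms with $r$ too large are negligible because $s_n$ is chosen near the threshold $n^{1/(\al+1)}$, so $c_k z_n^k$ decays). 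Substituting $c_k\sim h(k)k^{\al-1}\rho^{-k}$ and $z_n=\rho\eul^{-\eta_n}$ turns $\sum_{k>s_n}c_k z_n^k$ into a sum $\sum_{k>s_n}h(k)k^{\al-1}\eul^{-\eta_n k}$, which by a Laplace/Euler--Maclaurin estimate and the definition of $X$ and $s(t,\beta)$ evaluates asymptotically to $\eul^{-t}=\lambda$. Raising to the $m$-th power (the cross terms factorize since the constraint is just $k_i>s_n$) gives $\lambda^m$. One then needs a dominated-convergence style argument to exchange the limit with $\sum_m$, which follows from a crude uniform bound $[x^n]S(x)C_{>s}(x)^m/([x^n]S(x)\,m!)\le (2\lambda)^m/m!$ valid for all large $n$, obtained from the same coefficient estimates.

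The third step is to port everything to the multiset case. When $0<\rho<1$ the series $\sum_{j\ge2}C_{\le s}(\rho^j)/j$ differs from $\sum_{j\ge 2}C(\rho^j)/j$ by an amount that tends to $0$ (the tail over $k>s_n$ of $\sum_{j\ge2}$ is exponentially small since $\rho^{jk}\le\rho^{2k}$ decays), so the $j=1$ part is all that matters and the analysis reduces to exactly the set computation with $z_n$ — this is why $\beta_n=\eta_n$ there. When $\rho=1$ one works with $\exp\{\sum_{j\ge1}C_{\le s}(x^j)/j\}$, writes it as $G(x)\exp\{-\sum_{j\ge1}C_{>s}(x^j)/j\}$, and the dominant correction again comes from $j=1$; the saddle point is now $q_n=\rho\eul^{-\xi_n}$ solving $\sum_j q_n^jC'(q_n^j)=n$, and Theorem~\ref{thm:coeff_eC(x)_C(x)^k} in form \eqref{eq:coeff_ogf_muliset_rho=1} provides the ratio $[x^{n-r}]G(x)/[x^n]G(x)\sim q_n^{r}$, so the whole computation repeats verbatim with $\eta_n\mapsto\xi_n$.

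\textbf{Main obstacle.} The delicate part is the uniformity of the coefficient ratio $[x^{n-r}]F(x)/[x^n]F(x)$ over the full range of $r=k_1+\dots+k_m$ that contributes to $C_{>s_n}(x)^m$. Theorem~\ref{thm:coeff_eC(x)_C(x)^k} is an asymptotic for \emph{fixed} shifts, whereas here $r$ grows with $n$ (each $k_i$ is of order $s_n\asymp n^{1/(\al+1)}$, and $m$ ranges over all of $\Nat_0$); one must extract from $H$-admissibility a quantitative statement — essentially that $[x^{n-r}]F(x)=[x^n]F(x)\cdot z_n^{r}\cdot(1+o(1))$ uniformly for $r\le n^{1-\delta}$ or so, with an explicit error — and simultaneously show the contribution of large $r$ (and large $m$) is negligible. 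Controlling the slowly varying factor $h$ inside the Laplace-type sum $\sum_{k>s_n}h(k)k^{\al-1}\eul^{-\eta_n k}$, so that it produces exactly the correction $h(\beta_n^{-1}\ln C(\rho\eul^{-\beta_n}))/h(\beta_n^{-1})$ appearing in $X$, is the second technical hotspot and requires the standard representation theorem and uniform convergence properties of slowly varying functions.
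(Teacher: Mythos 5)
Your plan follows the same Erd\H{o}s--Lehner philosophy as the paper, but via a slightly different route. You expand the exact identity
\[
    \pr{\mathcal{L}(\mathsf{S}^{(n)})\le s}
    =\sum_{m\ge 0}\frac{(-1)^m}{m!}\,\frac{[x^n]S(x)C_{>s}(x)^m}{[x^n]S(x)},
\]
show each $m$-term tends to $\lambda^m/m!$ with $\lambda=\eul^{-t}$, and then interchange the limit with the (for each $n$ finite) alternating sum by dominated convergence. The paper instead performs the inclusion/exclusion directly on the events $\{\mathsf{F}^{(n)}_{k}\ge1\}$ for $k>s_n$: with $\Omega_{n,\mathrm{k}}$ and $I_\ell=\sum_{s<k_1<\cdots<k_\ell,\,\sum k_i\le n}\pr{\mathsf{F}^{(n)}\in\Omega_{n,\mathrm{k}}}$ it uses the Bonferroni ``sandwich''
\[
    1+\sum_{\ell\le 2M-1}(-1)^\ell I_\ell\le\pr{\mathcal{L}(\mathsf{F}^{(n)})\le s}\le 1+\sum_{\ell\le 2M}(-1)^\ell I_\ell
\]
for a fixed large $M$, and then only needs $I_\ell\to\lambda^\ell/\ell!$ for each fixed $\ell$. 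This is the only real structural difference. The paper's route buys you the luxury of never having to control the full alternating series uniformly in the truncation index; a fixed finite $M$ suffices. Your route is more compact as an identity, but it forces you to justify the interchange, which is where your ``crude uniform bound $J_m\le(2\lambda)^m/m!$'' lives. That bound is actually attainable: Lemma~\ref{lem:hayman} gives $[x^{n-r}]S(x)/[x^n]S(x)\le(1+\eps)z_n^r$ uniformly in $0<r\le n-K$ once $n$ is large, and for $n-K<r\le n$ the ratio is $\bigO{\sqrt{b(z_n)}\,z_n^n/S(z_n)}=o(z_n^r)$ because $S(z_n)=\eul^{C(z_n)}$ dominates every polynomial in $\eta_n^{-1}$; combined with $\sum_{r}[x^r]C_{>s_n}(x)^m z_n^r=C_{>s_n}(z_n)^m\le(2\eul^{-t})^m$ for $n$ large this yields the summable majorant. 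So the gap is fillable, but it is a genuine extra lemma that your plan only gestures at. A second, minor difference: the paper works with the probabilities $\pr{\mathsf{F}^{(n)}\in\Omega_{n,\mathrm{k}}}$ (``at least one cluster of each size $k_i$''), and must prove the two-sided bounds~\eqref{eq:F_n_k_i_as_F_and_c_i_sim_hayman} and~\eqref{eq:F_n_k_i_as_F_and_c_i_le_hayman} relating them to the products $\prod c_{k_i}w_n^{k_i}$; your $J_m$ is the $m$-th ``ordered $m$-tuple expectation'' and skips that comparison at the cost of the repeated-index terms, which you then have to show are negligible --- the paper carries out exactly this via the set $B_=$ in~\eqref{eq:set_max_order_of_sums_hayman}. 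Finally, in the multiset--$\rho=1$ case, be aware that what you need uniformly is not ``$[x^{n-r}]G(x)/[x^n]G(x)\sim q_n^r$'' as a one-off asymptotic, but the uniform-in-$r$ statement with the Gaussian correction $\exp\{-(A_{1,1}(q_n)-(n-r))^2/(2A_{2,2}(q_n))\}$ shown to be $1+o(1)$ for $r=\bigO{m\,s_n}$; the paper checks this in~\eqref{eq:set_max_exponent_o(1)_mset_rho=1_hayman}, and you will need the analogous computation. With those gaps filled your argument works and gives the same limit.
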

The proof, which is in Section~\ref{sec:thm:largest_comp_hayman}, is based on a combination of $H$-admissibility together with the inclusion/exclusion principle as exploited in~\cite{Erdoes1941}.
For an application of Theorem~\ref{thm:largest_comp_hayman} we have to determine $C(\rho\eul^{-\beta})$ and $\rho\eul^{-\beta} C'(\rho\eul^{-\beta})$ accurately enough as $\beta\to 0$. Here is a particular example where this works out nicely, the proof of which is performed after the proof of Theorem~\ref{thm:largest_comp_hayman}.
\begin{example}
\label{example:largest_comp} 
Let $c_n=n^{\al-1}\rho^{-n}$ for $\al>0,0<\rho\le 1$ and $n\in\Nat$. Define 
\[
    f(n) := \left(\frac{n}{\Gamma(\al+1)}\right)^{1/(\al+1)}
    \quad\text{and}\quad \tilde{f}(n)
    :=\left(\frac{n}{\Gamma(\al+1)\zeta(\al+1)}\right)^{1/(\al+1)}.
\]
Then, if $z_n=\rho\eul^{-\eta_n}$ is the solution to $z_nC'(z_n)=n$,
\begin{align*}
    \eta_n=f(n)^{-1} + o(n^{-1}) \quad\text{and}\quad 
    \ln X 
    = \al\ln f(n) + (\al-1)\ln\ln f(n) + (\al-1)\ln \al+ o(1)
\end{align*}
and, if $q_n=\rho\eul^{-\xi_n}$ is the solution to $\sum_{j\ge 1}q_n^jC'(q_n^j)=n$,
\[
    \xi_n = \tilde{f}(n)^{-1} + o(n^{-1}) \quad\text{and}\quad 
    \ln X = \al\ln\tilde{f}(n) + (\al-1) \ln\ln \tilde{f}(n) + (\al-1)\ln\al + o(1).
\]
This delivers the exact scaling for the distribution of the largest cluster in $\mathsf{S}^{(n)}$ and $\mathsf{G}^{(n)}$ for all $0<\rho\le 1$ .
\end{example}
Next we consider the smallest clusters. In the next result we determine the distribution of $\mathcal{M}(\mathsf{S}^{(n)})$, $\mathcal{M}(\mathsf{G}^{(n)})$. The authors of~\cite{Freiman2005} determine the limiting distribution of $\mathcal{M}(\mathsf{S}^{(n)})$ in the expansive case. We extend their result to the oscillating expansive case and further to $\mathcal{M}(\mathsf{G}^{(n)})$.
\begin{corollary}
\label{coro:smallest_comp_hayman}
For $\al>0$, $0<\rho\le 1$ and $0<\eps<\al/3$ let $(c_k)_{k\in\Nat}\in\F(2\al/3+\eps,\al,\rho)$. Then, for $s\in\Nat$,
	\[
		\lim_{n\to\infty}\pr{\mathcal{M}(\mathsf{F}^{(n)}) > s}
		= 
		\begin{dcases*}
		\exp\bigg\{-\sum_{1\le k\le s}c_k\rho^k\bigg\}, &$\mathsf{F}^{(n)} = \mathsf{S}^{(n)}$ \\
		\exp\bigg\{-\sum_{j\ge 1}\sum_{1\le k\le s}c_k\rho^{jk}/j\bigg\}, &$\mathsf{F}^{(n)}=\mathsf{G}^{(n)}$
		\end{dcases*}.
	\]
	Moreover, $\lim_{n\to\infty}\pr{\mathcal{M}(\mathsf{F}^{(n)})> s_n}=0$ for any $s_n\to\infty$ and $\mathsf{F}^{(n)}\in\{\mathsf{S}^{(n)},\mathsf{G}^{(n)}\}$.
\end{corollary}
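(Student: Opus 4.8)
The plan is to rewrite the event $\{\M(\mathsf F^{(n)})>s\}$ as ``$\mathsf F^{(n)}$ has no cluster of size $\le s$'', which on the level of generating functions means deleting the first $s$ terms of the counting sequence; the resulting probability is a ratio of two coefficients, each of which is handled by Theorem~\ref{thm:coeff_eC(x)_C(x)^k}. Concretely, put $P_s(x):=\sum_{1\le k\le s}c_kx^k$. Summing~\eqref{eq:distribution_cluster_set} over the $(N_1,\dots,N_n)\in\Omega_n$ with $N_1=\dots=N_s=0$ and using $\prod_{k>s}\e{c_kx^k}=\e{C(x)-P_s(x)}$, and likewise summing~\eqref{eq:distribution_cluster_multiset} and using $\sum_{m\ge0}\binom{c_k+m-1}{m}x^{km}=(1-x^k)^{-c_k}$ together with the identity $G(x)=\prod_{k\ge1}(1-x^k)^{-c_k}$, one obtains
\[
  \pr{\M(\mathsf S^{(n)})>s}=\frac{[x^n]\,S(x)\,\e{-P_s(x)}}{[x^n]S(x)}
  \qquad\text{and}\qquad
  \pr{\M(\mathsf G^{(n)})>s}=\frac{[x^n]\,G(x)\prod_{1\le k\le s}(1-x^k)^{c_k}}{[x^n]G(x)}.
\]

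Next I would recognise the numerators as objects of the same type. Let $\tilde C(x):=C(x)-P_s(x)=\sum_{k>s}c_kx^k$ be the $C$-series of the sequence $(c_k')_{k\in\Nat}$ with $c_k'=c_k\mathbbm{1}_{k>s}$. Since $(c_k')$ agrees with $(c_k)$ for all large $k$ it again lies in $\F(2\al/3+\eps,\al,\rho)$, and $S(x)\e{-P_s(x)}=\e{\tilde C(x)}$, $G(x)\prod_{k\le s}(1-x^k)^{c_k}=\e{\sum_{j\ge1}\tilde C(x^j)/j}$ are precisely the set- and multiset-generating series of $(c_k')$; hence Theorem~\ref{thm:coeff_eC(x)_C(x)^k} with $\ell=0$ yields asymptotics for all four coefficients, and it remains to compare two saddle-point expressions. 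Writing $z_n\to\rho$ (for $\mathsf S^{(n)}$, and for $\mathsf G^{(n)}$ when $\rho<1$), resp.\ $q_n\to\rho$ (for $\mathsf G^{(n)}$ when $\rho=1$), for the denominator saddle point and $y_n\to\rho$, resp.\ $\tilde q_n\to\rho$, for that of the numerator, I would argue that the polynomial perturbation $P_s$ — whose derivatives stay bounded near $\rho$, whereas $C',C''$ and their multiset analogues diverge there — displaces the saddle point by a self-correcting amount: its effect on the exponential factor cancels its effect on $y_n^{-n}$, resp.\ $\tilde q_n^{-n}$, to the relevant order, while the fluctuation prefactor $\sqrt{C''(\cdot)}$, resp.\ $\sqrt{\sum_jj(\cdot)^{2j}C''(\cdot)}$, is undisturbed because $(c_k)\in\F$ entails regular variation. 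What survives in the ratio is exactly the perturbing factor evaluated at the limiting saddle point, i.e.
\[
  \pr{\M(\mathsf S^{(n)})>s}\longrightarrow\e{-\sum_{1\le k\le s}c_k\rho^k},
  \qquad
  \pr{\M(\mathsf G^{(n)})>s}\longrightarrow\prod_{1\le k\le s}(1-\rho^k)^{c_k}=\e{-\sum_{j\ge1}\sum_{1\le k\le s}c_k\rho^{jk}/j};
\]
in the multiset case with $\rho=1$ the last expression is $\prod_{k\le s}(1-q_n^k)^{c_k}\to0$ because $q_n\to1$, and here it is enough to observe that the accompanying prefactor ratio stays bounded rather than to compute its limit.

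When $0<\rho<1$ the saddle comparison can be bypassed altogether: the perturbing factor is analytic on a disc of radius strictly larger than $\rho$, so expanding it as $\sum_\ell a_\ell x^\ell$ and combining the coefficient-ratio estimate $[x^{n-\ell}]F(x)/[x^n]F(x)\to\rho^\ell$ — a consequence of $H$-admissibility (Lemma~\ref{lem:hayman}) — with the uniform control on $[x^{n-\ell}]F/[x^n]F$ that $H$-admissibility supplies, one gets the limit $\sum_\ell a_\ell\rho^\ell$ by a dominated-convergence argument. Finally, for the last assertion: for each fixed $s$ and every $n$ with $s_n\ge s$ we have $\pr{\M(\mathsf F^{(n)})>s_n}\le\pr{\M(\mathsf F^{(n)})>s}$, hence $\limsup_n\pr{\M(\mathsf F^{(n)})>s_n}\le\lim_n\pr{\M(\mathsf F^{(n)})>s}$; since $\al>0$ forces $\sum_{k\ge1}c_k\rho^k=C(\rho)=\infty$ (and a fortiori $\sum_{j\ge1}\sum_{1\le k\le s}c_k\rho^{jk}/j\to\infty$), letting $s\to\infty$ drives the right-hand side to $0$.

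The hard part will be the saddle comparison for multisets with $\rho=1$: the perturbing factor $\prod_{k\le s}(1-x^k)^{c_k}$ vanishes precisely at $x=1=\rho$, so the ``analytic multiplier'' shortcut is unavailable and one must genuinely track how the finite perturbation $P_s$ moves $q_n$ and verify that the two competing effects — on $\e{\sum_j\tilde C(q^j)/j}$ and on $q^{-n}$ — cancel, leaving only the honest factor $\prod_{k\le s}(1-q_n^k)^{c_k}\to0$; showing that the variance-type quantity $\sum_jjq_n^{2j}C''(q_n^j)$ is essentially unaffected by the perturbation is exactly the place where membership of $(c_k)$ in $\F$, and the regular variation it entails, is used.
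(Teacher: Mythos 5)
Your reduction is the same as the paper's: rewrite $\pr{\M(\mathsf F^{(n)})>s}$ as the ratio $[x^n]F_{>s}(x)/[x^n]F(x)$, where the numerator is the (multi-)set generating series for the truncated sequence $c_k\mathbbm 1_{k>s}$, and note that this sequence still lies in $\F(2\al/3+\eps,\al,\rho)$. The limiting expressions you write down and the elementary argument for the $s_n\to\infty$ part (monotonicity plus $C(\rho)=\infty$ since $\al>0$) are all correct; the paper in fact leaves the $s_n\to\infty$ claim implicit.

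Where your proposal diverges from the paper is in how the ratio of coefficients is estimated. You propose to compute \emph{two different saddle points} — one for $F$ and one for $F_{>s}$ — and then argue that the displacement of the saddle produces compensating effects on the exponential factor and on $r^{-n}$, with the variance prefactor unaffected. You correctly flag that this is delicate, especially for $\mathsf G^{(n)}$ at $\rho=1$, where the multiplier $\prod_{k\le s}(1-x^k)^{c_k}$ vanishes at the singularity. The paper sidesteps this entirely. The key observation, already advertised right after Lemma~\ref{lem:hayman}, is that Hayman's formula does not require $r$ to be the exact saddle of the function whose coefficient one is extracting: for \emph{any} $r\to\rho$ it yields $[x^n]F_{>s}(x)\sim \frac{F_{>s}(r)}{\sqrt{2\pi b_{>s}(r)}}r^{-n}\exp\{-(a_{>s}(r)-n)^2/(2b_{>s}(r))\}$. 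Plugging the \emph{same} $w_n$ (the saddle of $F$, so $a(w_n)=n$) into both numerator and denominator, the $r^{-n}$ and $\sqrt{b}$ factors cancel up to $1+o(1)$, and the two exponential corrections are each $o(1)$ since $b(w_n)\to\infty$ much faster than $(a(w_n)-a_{>s}(w_n))^2$ (for $\rho=1$ multisets $a-a_{>s}$ is $\Theta(\xi_n^{-1})$, not bounded, but $b(w_n)=\Omega(\xi_n^{-(2\al/3+\eps+2)})$ dominates). What survives is exactly $F_{>s}(w_n)/F(w_n)\to\exp\{-P_s(\rho)\}$ (set case) or $\exp\{-\sum_{j\ge1}P_s(\rho^j)/j\}$ (multiset case), uniformly across $\rho\le1$, with no saddle comparison at all. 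Your route would work but it re-derives for a special case the kind of saddle-tracking the paper reserves for the genuinely bivariate Theorem~\ref{thm:LLT_number_comp_hayman}; the paper's single-saddle trick is the reason this statement is only a corollary.

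One smaller remark: your $\rho<1$ fallback via expanding the analytic multiplier and using $[x^{n-\ell}]F/[x^n]F\to\rho^\ell$ with dominated convergence is a legitimate alternative and close in spirit to the paper's argument, but you would still need to produce an integrable dominating sequence (here the uniform bound $[x^{n-\ell}]F/[x^n]F\le(1+\eps)\rho^\ell$ from $H$-admissibility, as used in the proof of Theorem~\ref{thm:largest_comp_hayman}), and — as you note — it does not touch the $\rho=1$ multiset case.
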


\paragraph{The Cluster Distribution}

In this section we study the distribution of the number of clusters in $\mathsf{S}^{(n)}$ and $\mathsf{G}^{(n)}$. Recall that for $\mathsf{F}^{(n)}\in\{\mathsf{S}^{(n)},\mathsf{G}^{(n)}\}$ the number of components is defined by
\[
	\kappa(\mathsf{F}^{(n)}) 
	:= \sum_{1\le k\le n}\mathsf{F}^{(n)}_k.
\]
Extending the generating series $S(x)$ and $G(x)$ to the bivariate version where the additional variable $y$ tags the number of clusters in a (multi-)set yields, see~\cite{Flajolet2009,Bergeron1998},
\[
    S(x,y) 
    =\e{yC(x)}
     \quad\text{and}\quad 
    G(x,y)
    =\e{\sum_{j\ge 1}\frac{C(x^j)y^j}{j}}.
\]
Then for $k\in\Nat$
\begin{align*}
	\pr{\kappa(\mathsf{S}^{(n)})=k} = \frac{[x^ny^k]S(x,y)}{[x^n]S(x)}
	\quad\text{and}\quad
	\pr{\kappa(\mathsf{G}^{(n)})=k} = \frac{[x^ny^k]G(x,y)}{[x^n]G(x)}.
\end{align*}
For $n,\ell\in\Nat$ let $(n)_\ell$ denote the falling factorial $(n)_\ell := n (n-1) \cdots (n-\ell+1)$.
By a straightforward computation we obtain the well-known relations
\begin{align}
	\label{eq:number_comp_falling_factorial_hayman}
    \ex{(\kappa(\mathsf{S}^{(n)}))_\ell} 
    = \frac{[x^n]d^\ell/(dy^\ell)S(x,y)\big|_{y=1}}{[x^n]S(x)} 
    \quad \text{and}\quad 
    \ex{(\kappa(\mathsf{G}^{(n)}))_\ell} 
    = \frac{[x^n]d^\ell/(dy^\ell)G(x,y)\big|_{y=1}}{[x^n]G(x)},
    \quad \ell\in\Nat.     
\end{align}
For the specific case $c_n\sim cn^{\al-1}\rho^{-n}$ and $c,\al>0,0<\rho\le 1$ the results in~\citet{Erlihson2008} can be used to obtain the moments of $\kappa(\mathsf{S}^{(n)})$; the following result completes the picture for both sets and multisets in the oscillating expansive case.
\begin{corollary}
	\label{coro:mean_sec_moment_hayman}
    For $\al>0$, $0<\rho\le 1$ and $0<\eps<\al/3$ let $(c_k)_{k\in\Nat}\in\F(2\al/3+\eps,\al,\rho)$. Let $z_n$ be the solution to $z_nC'(z_n)=n$. Then
    \begin{align*}
    	\ex{\kappa(\mathsf{F}^{(n)})^\ell}
    	\sim C(z_n)^\ell
    	\quad\text{for}\quad
    	\mathsf{F}^{(n)}
    	=
    	\begin{dcases*} 
    	\mathsf{S}^{(n)}, &$0<\rho\le 1$ \\
    	\mathsf{G}^{(n)}, &$0<\rho<1$
    	\end{dcases*}.
	\end{align*}
    If $\rho=1$, let $q_n$ be the solution to $\sum_{j\ge 1}q_n^jC'(q_n^j)=n$. Then
    \[
    	\ex{\kappa(\mathsf{G}^{(n)})}
    	\sim \sum_{j\ge 1}C(q_n^j) \quad\text{and}\quad 
    	\ex{\kappa(\mathsf{G}^{(n)})^2}
    	\sim \Big(\sum_{j\ge 1}C(q_n^j)\Big)^2 + \sum_{j\ge 1}jC(q_n^j).
    \]
\end{corollary}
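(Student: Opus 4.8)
The plan is to derive Corollary~\ref{coro:mean_sec_moment_hayman} directly from the coefficient extraction in Theorem~\ref{thm:coeff_eC(x)_C(x)^k} via the moment formulas~\eqref{eq:number_comp_falling_factorial_hayman}. First I would note that the falling factorials of $\kappa$ and the ordinary powers are interchangeable for the purposes of an asymptotic equivalence, since $\kappa(\mathsf{F}^{(n)})\to\infty$ in probability in the expansive regime and the relevant quantities $C(z_n)$, $\sum_j C(q_n^j)$ both tend to infinity; concretely, $(\kappa)_\ell = \kappa^\ell + O(\kappa^{\ell-1})$, so once the leading term of $\ex{(\kappa)_\ell}$ is pinned down, the lower-order corrections are swept into the $\sim$. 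So it suffices to compute $\ex{(\kappa(\mathsf{F}^{(n)}))_\ell}$ asymptotically.

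For the set case, differentiating $S(x,y)=\e{yC(x)}$ exactly $\ell$ times in $y$ and evaluating at $y=1$ gives a finite linear combination $\sum_{j=1}^{\ell} \stirlingsecond{\ell}{j} C(x)^j \cdot S(x)$ (Stirling numbers of the second kind), hence by~\eqref{eq:number_comp_falling_factorial_hayman} and~\eqref{eq:coeff_egf_set},
\[
    \ex{(\kappa(\mathsf{S}^{(n)}))_\ell}
    = \sum_{j=1}^{\ell} \stirlingsecond{\ell}{j}\,\frac{[x^n]S(x)C(x)^j}{[x^n]S(x)}
    \sim \sum_{j=1}^{\ell} \stirlingsecond{\ell}{j}\, C(z_n)^j
    \sim C(z_n)^\ell,
\]
the last step because $C(z_n)\to\infty$ so the top-degree term dominates. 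For multisets with $0<\rho<1$ one differentiates $G(x,y)=\e{\sum_{j\ge1}C(x^j)y^j/j}$; each $y$-derivative brings down a factor $\sum_{j\ge1}j^{p}C(x^j)$ for suitable powers $p$ (together with lower-order products), producing exactly the kind of series whose coefficients are controlled by~\eqref{eq:coeff_ogf_muliset_rho<1}. The crucial point here is that the prefactor $\e{\sum_{j\ge2}C(\rho^j)/j}$ in~\eqref{eq:coeff_ogf_muliset_rho<1} is \emph{independent of $\ell$ and of the $p_i$}, so it cancels in the ratio $[x^n](\cdots)/[x^n]G(x)$, leaving $\prod_i \sum_{j\ge1}j^{p_i}C(z_n^j)$; since $\sum_{j\ge1}C(z_n^j)\sim C(z_n)\to\infty$ (the $j=1$ term dominates as $z_n\to\rho<1$) while $\sum_{j\ge1}j^p C(z_n^j)$ stays bounded for $p\ge1$, only the terms arising from pure first derivatives survive asymptotically, yielding $C(z_n)^\ell$ again.

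For the boundary case $\rho=1$, the same differentiation of $G(x,y)$ is used, but now~\eqref{eq:coeff_ogf_muliset_rho=1} is the relevant extraction result and the saddle point is $q_n$ with $\sum_{j\ge1}q_n^jC'(q_n^j)=n$. Here $\sum_{j\ge1}C(q_n^j)$ does \emph{not} reduce to a single term, so I would carry out the bookkeeping only up to $\ell=2$: the first derivative gives $\sum_{j\ge1}C(x^j)\cdot G(x)$, so $\ex{\kappa(\mathsf{G}^{(n)})}\sim\sum_{j\ge1}C(q_n^j)$ by~\eqref{eq:coeff_ogf_muliset_rho=1} with $\ell=1,p_1=0$; the second derivative of $\e{\sum_j C(x^j)y^j/j}$ at $y=1$ equals $\big(\sum_{j\ge1}C(x^j)\big)^2 + \sum_{j\ge1}jC(x^j)$ times $G(x)$ (the first piece from differentiating the exponent twice, the second from differentiating the $y^j/j$ factor once more and picking up the extra $j$), and applying~\eqref{eq:coeff_ogf_muliset_rho=1} to each summand and using $\ex{\kappa^2}=\ex{(\kappa)_2}+\ex{\kappa}$ with $\ex{\kappa}=o\big((\sum_j C(q_n^j))^2\big)$ gives the stated second-moment formula. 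I do not expect a genuine obstacle here — the argument is essentially a substitution of Theorem~\ref{thm:coeff_eC(x)_C(x)^k} into~\eqref{eq:number_comp_falling_factorial_hayman} — but the one point requiring care, and the place where an error would most likely creep in, is the combinatorial identity for the $\ell$-th $y$-derivative of $G(x,y)$ at $y=1$ and the verification that in the $\rho<1$ case every term other than the pure-$C(z_n)$ product is asymptotically negligible; I would present that via Faà di Bruno (or a direct induction) and a clean statement that $\sum_{j\ge1}j^{p}C(z_n^j)=O(1)$ for $p\ge1$ and $\rho<1$.
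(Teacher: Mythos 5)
Your plan coincides with the paper's: differentiate the bivariate generating function, extract coefficients via Theorem~\ref{thm:coeff_eC(x)_C(x)^k}, and pass from falling\-/factorial moments to ordinary moments. But the ``clean statement'' you flag as the point requiring care, namely that $\sum_{j\ge1}j^{p}C(z_n^j)=O(1)$ for $p\ge1$ and $\rho<1$, is false: the $j=1$ summand is $C(z_n)\to\infty$, and indeed by~\eqref{eq:A_s_t_0<rho<1_proof_hayman} the whole sum is $\sim C(z_n)$ for every fixed $p$. What is true is that the $y$-derivatives of the exponent $g(y):=\sum_{j\ge1}C(x^j)y^j/j$ of $G(x,y)$ at $y=1$, namely $g^{(k)}(1)=\sum_{j\ge1}(j-1)(j-2)\cdots(j-k+1)C(z_n^j)$, are $O(1)$ for $k\ge 2$ because the falling factorial annihilates the $j=1$ term; but that cancellation does not survive if you expand them into monomials $\sum_j j^p C(z_n^j)$ as you propose. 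The paper does not need boundedness at all: it observes that the non-leading Bell-polynomial terms in $\partial_y^\ell G(x,y)\big|_{y=1}$ are products of at most $\ell-1$ factors $A_{0,k}(x)\sim C(z_n)$, hence $O(C(z_n)^{\ell-1})=o(C(z_n)^\ell)$, so the leading term $A_{0,1}(z_n)^\ell\sim C(z_n)^\ell$ dominates regardless. Either repair works, but the lemma as you stated it cannot.

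Two smaller miscomputations along the way: in the set case $\partial_y^\ell e^{yC(x)}\big|_{y=1}=C(x)^\ell S(x)$ exactly -- no Stirling numbers appear in the derivative. The Stirling numbers of the second kind enter only when you reconstruct $\ex{\kappa^\ell}$ from the falling-factorial moments $\ex{(\kappa)_j}$, and your displayed formula is in fact the one for $\ex{\kappa^\ell}$, mislabeled as $\ex{(\kappa)_\ell}$. And for $\rho=1$ you wrote $g''(1)=\sum_j j C(x^j)$ whereas it equals $\sum_j(j-1)C(x^j)$. Both slips are self-correcting here -- the top-degree term dominates, and adding back $\ex{\kappa}$ absorbs the extra $\sum_j C(q_n^j)$ -- but they are genuine errors in the route you describe, and the first two would matter if one tried to make the argument rigorous as written.
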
 
The proof is in Section~\ref{sec:coro:mean_sec_moment_hayman}. Some further comments are in place.
First, note that in Corollary~\ref{coro:mean_sec_moment_hayman} we consider only the first and second moment of $\kappa(\mathsf{G}^{(n)})$ in the case $\rho = 1$. We actually may compute \emph{any} moment also in that case by determining carefully the derivatives in~\eqref{eq:number_comp_falling_factorial_hayman} and then making use of Theorem~\ref{thm:coeff_eC(x)_C(x)^k}.
However, already for the case $\ell=2$ the second term in the asymptotic formula for $\ex{\kappa(\mathsf{G}^{(n)})^2}$ can be of the same order as the first term. To see this, consider following example. Let $c_n \sim n^{\al-1}$ for some $0<\al<1$. Set $q_n=\eul^{-\xi_n}$. Then, as we will show later in Lemma~\ref{lem:sum_k^a/(1-x)^b_different_regimes_proof_hayman}, we have that $\sum_{j\ge 1}C(q_n^j) = \Theta(\xi_n^{-1})$ and $\sum_{j\ge 1}jC(q_n^j)=\Theta(\xi_n^{-2})$. For this reason it seems out of reach to get a ``nice'' formula for $\ex{\kappa(\mathsf{G}^{(n)})^\ell}$ for general $\ell$. So, we are content with only presenting the cases $\ell=1,2$.
As a second remark, we want to mention that in general we cannot compute the variance in any of the cases that we consider, as we do not know (and in the generality considered here, cannot obtain) the second asymptotic order of the expressions at hand.

Under the stronger assumption that $(c_k)_{k\in\Nat}$ is expansive we can actually say much more. In this direction, \citet{Erlihson2004} derived a local and a central limit theorem for $\kappa(\mathsf{S}^{(n)})$ under the condition $c_n=cn^{\al-1}\rho^{-n}$ for $c,\al>0,0<\rho\le1$ and $n\in\Nat$; note the ``='' sign. Here we establish the validity of a local limit theorem in the expansive case for sets and multisets. 
\begin{theorem}
\label{thm:LLT_number_comp_hayman}
Let $(c_k)_{k\in\Nat}$ be expansive. Then for any $K>0$, as $n\to\infty$
\begin{align*}
	\pr{\kappa(\mathsf{S}^{(n)}) = \floor{C(z_n) + t\sqrt{C(z_n)/(\al+1)}}} 
	\sim \eul^{-t^2/2}\cdot \frac{1}{\sqrt{2\pi C(z_n)/(\al+1)}}, \quad t\in[-K,K].
\end{align*}
If $0<\rho<1$ we further get that for any $t\in\Real$, as $n\to\infty$,
\begin{align*}
	\pr{\kappa(\mathsf{G}^{(n)}) = \floor{C(z_n) + t\sqrt{C(z_n)/(\al+1)}}} 
	\sim \eul^{-t^2/2}\cdot \frac{1}{\sqrt{2\pi C(z_n)/(\al+1)}}.
\end{align*}
\end{theorem}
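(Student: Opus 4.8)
I begin with $\C$-sets. Since $S(x,y)=\e{yC(x)}$, extracting the coefficient of $y^k$ gives
\[
  \pr{\kappa(\mathsf{S}^{(n)})=k}=\frac{[x^n]C(x)^k}{k!\,[x^n]S(x)},
\]
so the whole problem reduces to controlling $[x^n]C(x)^k$ as $k$ ranges over a window $W_n=\{k\in\Nat:|k-C(z_n)|\le K'\sqrt{C(z_n)}\}$ for a suitable $K'>K$; the denominator is handled by Theorem~\ref{thm:coeff_eC(x)_C(x)^k} with $\ell=0$. For the numerator the plan is a saddle-point argument: for $k\in W_n$ let $w=w_{n,k}\in(0,\rho)$ be the solution of $k\,wC'(w)/C(w)=n$, which exists and is unique because $w\mapsto wC'(w)/C(w)$ increases strictly from $1$ to $\infty$ on $(0,\rho)$, and note $w_{n,C(z_n)}=z_n$. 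I expect that $C(x)^k$ is $H$-admissible with estimates \emph{uniform} in $k\in W_n$ — this should follow from the $H$-admissibility machinery of Section~\ref{sec:proof_h_admissibility} carried through with the growing exponent $k$ — so that, uniformly on $W_n$,
\[
  [x^n]C(x)^k\sim\frac{C(w_{n,k})^k}{w_{n,k}^{\,n}\sqrt{2\pi k\,b_C(w_{n,k})}},\qquad b_C(w):=\frac{wC'(w)+w^2C''(w)}{C(w)}-\Big(\frac{wC'(w)}{C(w)}\Big)^2 .
\]

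Feeding this and Theorem~\ref{thm:coeff_eC(x)_C(x)^k} into the ratio and using Stirling for $k!$ reduces everything to the behaviour on $W_n$ of
\[
  \Psi_n(k):=k\log C(w_{n,k})-n\log\frac{w_{n,k}}{z_n}+k-k\log k-C(z_n).
\]
The envelope identity $\tfrac{d}{dk}\big(k\log C(w_{n,k})-n\log w_{n,k}\big)=\log C(w_{n,k})$, valid because $w_{n,k}$ extremises that bracket, yields $\Psi_n'(k)=\log\big(C(w_{n,k})/k\big)$, so $\Psi_n$ is maximised at the $k$ with $C(w_{n,k})=k$; combined with the saddle equation this forces $w_{n,k}=z_n$, $k=C(z_n)$, and $\Psi_n(C(z_n))=0$. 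Differentiating once more and using the definition of $w_{n,k}$ gives
\[
  \Psi_n''(C(z_n))=-\frac{n+z_n^2C''(z_n)}{C(z_n)\big(n+z_n^2C''(z_n)\big)-n^2}.
\]
Here the \emph{expansive} hypothesis enters decisively: writing $c_k=h(k)k^{\al-1}\rho^{-k}$ with $h$ slowly varying, the Abelian asymptotics for regularly varying coefficients — the same ones used in Section~\ref{sec:proof_main}, cf.\ Lemma~\ref{lem:sum_k^a/(1-x)^b_different_regimes_proof_hayman} — give, with $z_n=\rho\eul^{-\eta_n}\to\rho$,
\[
  C(z_n)\sim\Gamma(\al)h(\eta_n^{-1})\eta_n^{-\al},\quad n=z_nC'(z_n)\sim\Gamma(\al+1)h(\eta_n^{-1})\eta_n^{-\al-1},\quad z_n^2C''(z_n)\sim\Gamma(\al+2)h(\eta_n^{-1})\eta_n^{-\al-2}.
\]
Substituting these, the last two displays collapse to $b_C(z_n)\sim\al\eta_n^{-2}$ and $\Psi_n''(C(z_n))\sim-(\al+1)/C(z_n)$, and a routine bound showing $\Psi_n'''$ is of smaller order on $W_n$ then gives, for $k=\floor{C(z_n)+t\sqrt{C(z_n)/(\al+1)}}$ with $|t|\le K$, that $\Psi_n(k)=-t^2/2+o(1)$ while the accumulated prefactors assemble (again using the three asymptotics above) to exactly $\big(2\pi C(z_n)/(\al+1)\big)^{-1/2}$; the floor is immaterial since the scale $\sqrt{C(z_n)/(\al+1)}$ diverges. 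This establishes the statement for $\mathsf{S}^{(n)}$.

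For $\C$-multisets with $0<\rho<1$ I would argue through the enumeration results of~\cite{Panagiotou2022} instead of repeating the saddle point. Here $G(x,y)=\e{yC(x)}\cdot P(x,y)$ with $P(x,y)=\e{\sum_{j\ge2}C(x^j)y^j/j}$, and since $\rho<1$ the factor $P$ is analytic on a bidisc strictly containing $\{|x|\le\rho\}\times\{|y|\le1\}$; hence the joint enumeration of multisets by size and number of clusters is only a bounded perturbation of the set situation, and~\cite{Panagiotou2022} in fact supplies asymptotics for $[x^ny^k]G(x,y)$ uniform over the whole relevant range of $k$, which is precisely why the multiset statement holds for every $t\in\Real$. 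Dividing those asymptotics by $[x^n]G(x)$ from Theorem~\ref{thm:coeff_eC(x)_C(x)^k}, and combining with the moment asymptotics $\ex{\kappa(\mathsf{G}^{(n)})^\ell}\sim C(z_n)^\ell$ of Corollary~\ref{coro:mean_sec_moment_hayman} to locate and scale the peak, one reads off the same Gaussian as in the set case; intuitively $\kappa(\mathsf{G}^{(n)})$ equals $\kappa(\mathsf{S}^{(n)})$ plus a tight correction coming from the $j\ge2$ part of $G$, so the local limit law is inherited.

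The heart of the matter — and the point where \emph{expansive} cannot be weakened to \emph{oscillating expansive} — is the variance, i.e.\ the appearance of the constant $\al+1$: one genuinely has to differentiate the Abelian asymptotics of $C$ twice, and those second-order estimates are unavailable in the oscillating regime, exactly as observed after Corollary~\ref{coro:mean_sec_moment_hayman}. The remaining technical hurdle is the \emph{uniformity} of the saddle-point estimate for $[x^n]C(x)^k$ over the window $W_n$, that is, an $H$-admissibility statement whose constants do not deteriorate as the exponent $k\to\infty$; I expect this to come out of the arguments of Section~\ref{sec:proof_h_admissibility}, but it needs to be carried through with the dependence on $k$ made explicit.
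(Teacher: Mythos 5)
Your set-case argument is a legitimate alternative to the paper's, but you have placed the entire difficulty into a step you leave as an expectation. You want a saddle-point estimate for $[x^n]C(x)^k$ with a $k$-dependent saddle $w_{n,k}$, uniform over the window $W_n$; this is not a consequence of the $H$-admissibility results in Section~\ref{sec:proof_h_admissibility}, which establish $H$-admissibility of $S(x)C(x)^\ell$ and of the multiset analogues for \emph{fixed} $\ell$. The function $C(x)^k$ with $k\to\infty$ is a different beast, and the paper never proves such a uniform $H$-admissibility statement. Instead, the paper keeps the saddle \emph{fixed} at $z_n$, writes
\[
[x^n]C(x)^{N'}=z_n^{-n}C(z_n)^{N'}\cdot\pr{S_{N'}=n},
\]
where $S_p$ is a sum of $p$ iid variables with probability generating function $C(z_n x)/C(z_n)$, and invokes the local limit theorem of Lemma~\ref{lem:enumeration_paper_LLT_lemma_hayman} (imported from~\cite{Panagiotou2022}) at the point $n=\nu_{N'}-\big(t\sqrt{\al/(\al+1)}+o(1)\big)\sigma_{N'}$, which lies away from the mean of $S_{N'}$. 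The Gaussian factor then assembles from two sources: the off-centre evaluation of the iid LLT and Stirling applied to $N'!$. This cleanly dodges the uniformity problem your route faces, so while your curvature calculation $\Psi_n''(C(z_n))\sim-(\al+1)/C(z_n)$ and the Abelian asymptotics are all correct, the proof is not complete as written: you would have to supply the uniform $H$-admissibility with growing exponent, which is a nontrivial, separate piece of work that is exactly as hard as proving Theorem~\ref{thm:bivariate_counting_set_hayman} from scratch.

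The multiset sketch has a genuine gap. You cannot ``read off the Gaussian'' from Corollary~\ref{coro:mean_sec_moment_hayman}; that corollary gives $\ex{\kappa(\mathsf{G}^{(n)})^\ell}\sim C(z_n)^\ell$, which only pins down the location $C(z_n)$ and says nothing about the variance scale $C(z_n)/(\al+1)$ --- indeed, the paper explicitly remarks after Corollary~\ref{coro:mean_sec_moment_hayman} that the variance is \emph{not} computable from those estimates, and after Theorem~\ref{thm:LLT_number_comp_hayman} that the LLT itself is what \emph{suggests} (but does not prove) the variance asymptotics. Likewise, the intuition that $\kappa(\mathsf{G}^{(n)})$ equals $\kappa(\mathsf{S}^{(n)})$ plus a tight correction is plausible but is not an argument: the bivariate generating functions couple the $y$-marking to the $j\ge2$ terms, so the perturbation $P(x,y)$ being analytic on a larger bidisc does not immediately transfer a local limit law. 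What the paper actually does in Section~\ref{sec:thm:bivariate_counting_set_hayman} is combine the asymptotics of $[x^ny^{N'}]G(x,y)$ from~\cite[Thm.~1(I)]{Panagiotou2022} with a delicate second-order expansion of the bivariate saddle point $(x_n,y_n)$ around $(z_n,1)$ --- parametrizing $x_n=z_n\eul^{\delta_n}$, showing $\delta_n=o(\eta_n)$, then tracking $\delta_n$ and $y_n-1$ to order $L/n$ and $L/C(z_n)$ respectively, and finally cancelling several $\eul^{L}$-type and $\eul^{t^2}$-type terms to leave $\eul^{-t^2/2}$. None of that is present in your sketch, and it is precisely where the constant $\al+1$ is produced. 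So the multiset part of your proposal, as written, does not constitute a proof.
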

The proof can be found in Section~\ref{sec:thm:bivariate_counting_set_hayman}. Let us make two more remarks. First, this theorem of course \emph{strongly suggests} that the variance of $\kappa(\mathsf{S}^{(n)})$ and $\kappa(\mathsf{G}^{(n)})$ is  $\sim C(z_n)/(\al+1)$; we leave it as an open problem to establish that. Moreover, note that in Theorem~\ref{thm:LLT_number_comp_hayman} the statement about $\kappa(\mathsf{S}^{(n)})$ implies a central limit theorem, whereas the statement about $\kappa(\mathsf{G}^{(n)})$ is weaker and in general not sufficient to obtain a central limit theorem.

In the proof we will, among other ingredients, exploit knowledge about the (asymptotic) number of~$\C$-(multi-)sets of total size $n$ and with $N$ clusters. In the paper~\cite{Panagiotou2022} we determined the asymptotic number of such multisets, which is nothing else than the coefficient $[x^ny^N]G(x,y)$, for a wide range (but not all) of~$N$. Here we solve, again by using tools from~\cite{Panagiotou2022}, the analogous problem for the set case; the proof is in Section~\ref{sec:thm:bivariate_counting_set_hayman}. 
\begin{theorem}
\label{thm:bivariate_counting_set_hayman}
Let $(c_k)_{k\in\Nat}$ be expansive. Let $N=N_n$ be such that $N,n/N\to\infty$ as $n\to\infty$. Set $r_n$ to be the solution to $r_nC'(r_n)/C(r_n)=n/N$.
Then, as $n\to\infty$,
\[
    [x^ny^{N}]S(x,y)
    \sim  \frac{1}{N!}\cdot \frac{C(r_n)^{N}}{\sqrt{2\pi N r_n^2 C''(r_n)/((\al+1)C(r_n))}} \cdot r_n^{-n}.
\]
\end{theorem}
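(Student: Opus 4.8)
The plan is to extract the coefficient $[x^n y^N] S(x,y) = [x^n y^N] \exp\{yC(x)\} = \frac{1}{N!}[x^n]C(x)^N$ via the saddle-point method in the single variable $x$, treating $N = N_n$ as a slowly-varying (but unbounded) parameter. The key point is that $C(x)^N = \exp\{N \ln C(x)\}$, so we are extracting the $n$-th coefficient of $\exp\{N\ln C(x)\}$, and the natural saddle point is the $r_n$ solving $r_n \frac{d}{dr_n}\big(N\ln C(r_n)\big) = n$, i.e. $r_n C'(r_n)/C(r_n) = n/N$ — exactly the definition given. First I would set up the Cauchy integral $[x^n]C(x)^N = \frac{1}{2\pi}\int_{-\pi}^{\pi} C(r_n e^{i\theta})^N r_n^{-n} e^{-in\theta}\, d\theta$ and verify that the function $x \mapsto C(x)^N$, or rather the relevant scaled version, is $H$-admissible (or directly amenable to the Hayman-type estimates), so that Lemma~\ref{lem:hayman} applies. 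This should follow from the work in Section~\ref{sec:proof_h_admissibility} where $S$, $G$ and related series are shown $H$-admissible; here the relevant series has the "exponent" $N\ln C(x)$, and since $(c_k)\in\F$ is expansive one expects $\ln C(x) \sim$ a power-log type singularity as $x \to \rho$, whose $N$-fold multiple still lies in the admissible class uniformly in $N$ as long as $N \to \infty$ not too fast — but crucially $n/N \to \infty$ keeps $r_n$ bounded away from $0$ and $\rho$ in a controlled way.

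The main computation is the local expansion around $\theta = 0$. Writing $\phi(\theta) := N\ln C(r_n e^{i\theta}) - in\theta$, one has $\phi'(0) = 0$ by the choice of $r_n$, and $\phi''(0) = -N\big(r_n^2 (\ln C)''(r_n) + r_n(\ln C)'(r_n)\big)$; using $r_n(\ln C)'(r_n) = n/N$ this becomes $\phi''(0) = -\big(N r_n^2 C''(r_n)/C(r_n) - n^2/N + n/N\big)\cdot(\pm 1)$, and the point is to show the dominant term is $N r_n^2 C''(r_n)/C(r_n)$ divided by the expansiveness-induced factor $(\al+1)$ — this is where the $(\al+1)$ in the denominator of the statement comes from, mirroring how $C''(z_n)$ and $C(z_n)/(\al+1)$ are related in Theorem~\ref{thm:LLT_number_comp_hayman}. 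Concretely, for expansive sequences $x C'(x)/C(x) \sim (\al+1)^{-1} \cdot \big(\text{something}\big)$... more precisely one uses the asymptotic relations $C(x) \sim h(\cdot)\Gamma(\al)(\cdots)$, $xC'(x)\sim \al \cdot$ (that) and $x^2 C''(x) \sim \al(\al+1)\cdot$(that)/(something), so that $r_n^2 C''(r_n) \sim (\al+1)\, r_n C'(r_n) \cdot (n/N)/(\text{lower order})$; carefully bookkeeping gives the "effective variance" $\sigma_n^2 = N r_n^2 C''(r_n)/\big((\al+1)C(r_n)\big)$ appearing in the claim. Then the standard saddle-point estimate yields $[x^n]C(x)^N \sim C(r_n)^N r_n^{-n} / \sqrt{2\pi\sigma_n^2}$, and dividing by $N!$ gives the result.

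The routine but necessary remaining steps are: (i) the tail estimate, showing $\int_{|\theta|>\delta_n} |C(r_n e^{i\theta})|^N\, d\theta$ is negligible compared to the main term — here $H$-admissibility (or a direct argument that $|C(r_n e^{i\theta})/C(r_n)|$ is bounded away from $1$ for $|\theta| \ge \delta_n$, raised to the power $N\to\infty$) does the work; (ii) controlling the error in the quadratic approximation on $|\theta| \le \delta_n$, i.e. that the cubic term $\phi'''(0)\delta_n^3$ is $o(1)$ after the Gaussian rescaling $\theta = u/\sqrt{|\phi''(0)|}$, which requires choosing $\delta_n$ in the usual window and using that $|\phi'''(0)|$ is not too large relative to $|\phi''(0)|^{3/2}$ — again governed by the regular-variation estimates for $C, C', C'', C'''$ near the radius of convergence.

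The hard part, I expect, will not be any individual estimate but establishing the required bounds \emph{uniformly in} $N = N_n$ over the whole range $N\to\infty$, $n/N\to\infty$: one must verify that the admissibility/saddle-point machinery of Section~\ref{sec:proof_h_admissibility} applies with the parameter $N$ present, i.e. that the function $C(x)^N$ genuinely behaves like an $H$-admissible function with the stated saddle $r_n$ and that the error terms degrade gracefully as $N$ grows. The cleanest route is probably to reduce to the results already proven for $S$ and $C(x)^\ell$ in Theorem~\ref{thm:coeff_eC(x)_C(x)^k} by a change of perspective — but since there $\ell$ is fixed while here $N\to\infty$, one instead mimics the proof of that theorem with $N$ carried along, and the crux is that $n/N\to\infty$ forces $r_n\to\rho$, putting us back in the expansive regime where all the asymptotic equivalences for $C$ and its derivatives hold and combine to produce the factor $(\al+1)$.
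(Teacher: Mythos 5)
Your plan is sound in spirit: the saddle point $r_n C'(r_n)/C(r_n) = n/N$ is exactly right, the quantity $\phi''(0)$ you compute is precisely the variance that appears, and you correctly trace the factor $(\al+1)$ back to the regular-variation asymptotics $r^2 C''(r) + rC'(r) - (rC'(r))^2/C(r) \sim r^2 C''(r)/(\al+1)$ (which follows from $C, rC', r^2C''$ scaling as $\Gamma(\al), \Gamma(\al+1), \Gamma(\al+2)$ times a common factor). However, your route and the paper's diverge at exactly the spot you flag as ``the hard part.'' The paper does \emph{not} extend the $H$-admissibility machinery to $C(x)^N$ with $N$ as a varying parameter; instead it rewrites $[x^n]C(x)^N = C(r_n)^N r_n^{-n}\cdot \Pr[S_N = n]$, where $S_N$ is a sum of $N$ i.i.d.\ nonnegative integer random variables with p.g.f.\ $C(r_n x)/C(r_n)$, and then invokes a pre-established local limit theorem (Lemma~\ref{lem:enumeration_paper_LLT_lemma_hayman}, imported from~\cite{Panagiotou2022}) which is already proved uniformly in the regime $p\to\infty$, $\chi\to 0$. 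Since the choice of $r_n$ makes $n$ the mean of $S_N$, the LLT immediately gives $\Pr[S_N = n] \sim 1/(\sqrt{2\pi}\,\sigma_N)$ with $\sigma_N^2 \sim N r_n^2 C''(r_n)/((\al+1) C(r_n))$, and the theorem drops out after dividing by $N!$.

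So what each approach buys: your direct Cauchy-integral argument is self-contained in the sense that it avoids a black-box citation, but it genuinely requires re-running the admissibility analysis (locality + decay conditions, with $\theta_0$ and all error bounds now depending on both $\chi$ and $N$), which Section~\ref{sec:proof_h_admissibility} does not do since $\ell$ is held fixed there — you cannot ``reduce to Theorem~\ref{thm:coeff_eC(x)_C(x)^k}'' as you tentatively suggest, and you would have to supply those uniform-in-$N$ estimates from scratch. The paper's probabilistic reformulation absorbs exactly that uniformity issue into an LLT for a triangular array that has already been proved elsewhere, which is why its proof is only a few lines. Both are ``saddle-point arguments in heart,'' but the probabilistic packaging is what makes the double limit $N\to\infty$, $n/N\to\infty$ painless.
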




\section{Preliminaries}
\label{sec:preliminaries}
In this section we first introduce the concept of $H$-admissibility and state  Lemma~\ref{lem:hayman}, which is a central result about coefficient extraction. Subsequently, we collect various estimates for power series that are  needed in the forthcoming proofs in Section~\ref{sec:power_series_hayman}.

\subsection{$H$-admissibility}
\label{sec:H-admissibility}
We start by reviewing the concept of $H$-admissibility, which is a general set of conditions on a function $F(x)$ with radius of convergence $0<\rho\le \infty$ under which $[x^n]F(x)$ can be  computed asymptotically. This theory was initiated in the seminal paper~\cite{Hayman1956} and has seen numerous extensions and applications. As a general reference we recommend~\cite[Ch.~VIII.5]{Flajolet2009}.

Set $F(x)=\eul^{f(x)}$.
By applying Cauchy's coefficient formula and switching to polar coordinates we obtain for some $0<r<\rho$ \begin{align}
    \label{eq:cauchy_integral_hayman_def}
    [x^n]F(x)
    = \frac{r^{-n}}{2\pi}\int_{-\pi}^\pi 
	F(r\eul^{\iu\theta}) \eul^{-n\iu\theta}d\theta.
\end{align}
To get a grip on this expression we expand $F(r\eul^{\iu\theta})$ at $\theta=0$, so that for some $\lvert\xi\rvert \le \lvert\theta\rvert \le \pi$
\begin{align}
    \label{eq:h_taylor_hayman_def}
    F(r\eul^{\iu\theta})\eul^{-n\iu\theta}
    = F(r)\cdot \e{\iu \theta (a(r)-n) - \frac{\theta^2}{2} b(r) + \frac{(\iu\theta)^3}{6} c(r\eul^{\iu\xi})}
\end{align}
for functions $a,b$ and $c$ given by
\begin{align*}
    \iu a(x)
    := \frac{\partial }{\partial \theta} f(x\eul^{\iu\theta})\bigg|_{\theta=0},
    \quad -b(x)
    := \frac{\partial^2}{\partial \theta^2}f(x\eul^{\iu\theta})\bigg|_{\theta=0}
    \quad\text{and}\quad 
    \iu^3 c(x) 
    := \frac{\partial^3}{\partial \theta^3}f(x\eul^{\iu\theta})\bigg|_{\theta=0}.
\end{align*}
In particular,
\begin{align}
    \label{eq:a,b,c_hayman_def}
    a(x)
    = xf'(x), \quad 
    b(x) = x^2f''(x) + xf'(x) 
    \quad\text{and}\quad 
    c(x) = x^3f'''(x) + 3x^2f''(x) + xf'(x).
\end{align}
With these definitions at hand we (informally) say that $F(x)$ is $H$-admissible, if it is possible to split up~\eqref{eq:cauchy_integral_hayman_def} into a dominant part, where $f(r\eul^{\iu\theta})-n\iu\theta = f(r) + \iu\theta (a(r)-n) - \theta^2 b(r)/2 + o(1)$, and another integral that is negligible. Then by choosing $a(r)$ to be (close to) $n$ the asymptotic value of the dominant integral can be retrieved, as it is of ``Gaussian'' type. The following three conditions formalize this idea, where $F$ is assumed to be a function with radius of convergence $0<\rho\le\infty$ which is positive on some interval $(R_0,\rho)\subseteq(0,\rho)$.
\begin{enumerate}[label=($H_{\arabic*}$)]
    \item~[Capture Condition]\label{item:hayman_capture} $a(r)$ and $b(r)$ tend to infinity as $r\to\rho$.
    \item~[Locality Condition]\label{item:hayman_locality} For some function $\theta_0:(R_0,\rho)\to \Real^+$ we have as $r\to\rho$ uniformly in $\lvert\theta\rvert \le \theta_0(r)$
    \[
        F(r\eul^{\iu\theta})
        \sim F(r) \cdot \e{\iu \theta a(r) - {\theta^2}b(r)/2}.
    \]
    \item~[Decay Condition]\label{item:hayman_decay} As $r\to\rho$ uniformly in $\theta_0(r) \le\lvert \theta\rvert <\pi$
    \[
        F(r\eul^{\iu\theta})
        = o\big( b(r)^{-1/2}{F(r)}\big).
    \]
\end{enumerate}
We call $F(x)$ $H$-admissible if it has the three proprties~\ref{item:hayman_capture}--\ref{item:hayman_decay}.
The following statement, which originates in~\cite[Thm.~1]{Hayman1956}, provides a useful tool for determining the $n$-th coefficient of a $H$-admissible function, see also~\cite[Prop.~VIII.5]{Flajolet2009}.
\begin{lemma}
\label{lem:hayman}
Suppose that $F(x)$ is $H$-admissible. Then as $r\to\rho$ 
\[
    [x^n]F(x)
    =  \frac{F(r)}{\sqrt{2\pi b(r)}} \cdot r^{-n} \cdot \left( \e{ -\frac{(a(r)-n)^2}{2b(r)}} + \eps_n\right),
\]
where $\lim_{r\to\rho} \sup_{n\in\Nat} \lvert\eps_n\rvert = 0 $.
\end{lemma}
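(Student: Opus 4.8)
\textbf{Proof plan for Lemma~\ref{lem:hayman}.}
The plan is to start from the Cauchy representation~\eqref{eq:cauchy_integral_hayman_def}, which is valid for every $0<r<\rho$, and to split the circle into the ``central arc'' $|\theta|\le\theta_0(r)$ and the ``tail'' $\theta_0(r)\le|\theta|<\pi$, exactly along the divide furnished by the Locality and Decay conditions. For the tail, the Decay condition~\ref{item:hayman_decay} gives $F(r\eul^{\iu\theta})=o(b(r)^{-1/2}F(r))$ uniformly, so integrating over an interval of length at most $2\pi$ contributes at most $o(b(r)^{-1/2}F(r))$ to the integral; after multiplying by $r^{-n}/(2\pi)$ this is absorbed into the $\eps_n$ error term, uniformly in $n$. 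This is the easy half.

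For the central arc I would substitute the Locality approximation $F(r\eul^{\iu\theta})\sim F(r)\e{\iu\theta a(r)-\theta^2 b(r)/2}$, so that
\begin{align*}
    \frac{r^{-n}}{2\pi}\int_{|\theta|\le\theta_0(r)} F(r\eul^{\iu\theta})\eul^{-n\iu\theta}\,d\theta
    \sim \frac{F(r)\,r^{-n}}{2\pi}\int_{|\theta|\le\theta_0(r)} \e{\iu\theta(a(r)-n)-\theta^2 b(r)/2}\,d\theta.
\end{align*}
The remaining task is to evaluate this Gaussian integral. Completing the square in the exponent yields a factor $\e{-(a(r)-n)^2/(2b(r))}$ times $\int \e{-b(r)(\theta-\iu(a(r)-n)/b(r))^2/2}\,d\theta$; extending the (shifted) contour to the whole real line introduces an error that, after a change of variables $u=\theta\sqrt{b(r)}$, is controlled by the tail of the standard Gaussian beyond $\theta_0(r)\sqrt{b(r)}$. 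The Capture condition~\ref{item:hayman_capture} forces $b(r)\to\infty$; together with the fact that $\theta_0(r)\sqrt{b(r)}\to\infty$ (which is part of what ``Locality + Decay at scale $\theta_0$'' must guarantee, and can be extracted from the admissibility set-up, e.g.\ because the Locality window has to be wide enough for the Gaussian approximation to see the full mass) this tail is $o(1)$, and $\int_{\Real}\eul^{-u^2/2}\,du=\sqrt{2\pi}$ produces the claimed $\sqrt{2\pi b(r)}$ in the denominator. One must also track the effect of the imaginary shift: when $|a(r)-n|$ is large compared with $\sqrt{b(r)}$ the prefactor $\e{-(a(r)-n)^2/(2b(r))}$ is already tiny and dominates the (also small) remainder, while when $|a(r)-n|=O(\sqrt{b(r)})$ the shift is $O(1)$ and harmless; in both regimes the discrepancy between the truncated shifted Gaussian and $\sqrt{2\pi}$ is $o(1)$, and this $o(1)$ is what gets folded into $\eps_n$.

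Assembling the two pieces gives
\[
    [x^n]F(x)
    = \frac{F(r)}{\sqrt{2\pi b(r)}}\,r^{-n}\Big(\e{-(a(r)-n)^2/(2b(r))}+\eps_n\Big),
\]
and the key point is that every error bound above was uniform in $n$ (the tail estimate from~\ref{item:hayman_decay} does not see $n$ at all, and the Gaussian-truncation error is bounded by a quantity depending only on $r$), so $\sup_{n\in\Nat}|\eps_n|\to 0$ as $r\to\rho$. The main obstacle I anticipate is the bookkeeping in the central arc: one has to argue carefully that the single Locality estimate, which is an asymptotic equivalence $F(r\eul^{\iu\theta})\sim F(r)\e{\iu\theta a(r)-\theta^2b(r)/2}$ holding uniformly in $|\theta|\le\theta_0(r)$, is strong enough to be integrated — i.e.\ that the multiplicative $(1+o(1))$ there does not blow up after multiplication by the (possibly large) Gaussian weight and integration — and simultaneously that the truncation length $\theta_0(r)\sqrt{b(r)}$ diverges so that no Gaussian mass is lost. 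Handling the imaginary translation of the contour uniformly in $n$, rather than for a fixed $n$, is the one genuinely delicate estimate; everything else is routine.
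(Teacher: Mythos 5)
The paper does not prove this lemma: it is quoted verbatim from the literature (Hayman~\cite{Hayman1956}, Thm.~1; see also~\cite[Prop.~VIII.5]{Flajolet2009}), so there is no in-paper argument to compare against. Your plan reproduces the classical proof correctly: split the Cauchy integral at $\theta_0(r)$, dispose of $\theta_0(r)\le|\theta|<\pi$ via~\ref{item:hayman_decay}, and evaluate the central arc via~\ref{item:hayman_locality} followed by a Gaussian computation. Uniformity in $n$ holds for exactly the reason you give: $n$ enters only through the phase $\eul^{-\iu n\theta}$ of modulus one, so every modular bound — the tail bound from~\ref{item:hayman_decay}, the $o(1)$ from~\ref{item:hayman_locality}, and the Gaussian truncation — is $n$-free.

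Two small clarifications. First, the auxiliary fact $\theta_0(r)\sqrt{b(r)}\to\infty$, which you rightly flag as needed, is not an extra hypothesis but a consequence of the stated conditions: evaluating~\ref{item:hayman_locality} and~\ref{item:hayman_decay} at $\theta=\theta_0(r)$ gives $\eul^{-\theta_0(r)^2 b(r)/2}\sim |F(r\eul^{\iu\theta_0})|/F(r)=o(b(r)^{-1/2})$, forcing $\theta_0(r)^2 b(r)\to\infty$. Second, the ``imaginary shift'' worry has an exact algebraic resolution that makes the case split on $|a(r)-n|$ unnecessary: writing $c:=(a(r)-n)/b(r)$, the truncation error $\int_{|\theta|>\theta_0}\e{-\tfrac{b(r)}{2}(\theta-\iu c)^2}\,d\theta$ has modulus at most $\e{b(r)c^2/2}\int_{|\theta|>\theta_0}\e{-b(r)\theta^2/2}\,d\theta$, and the prefactor $\e{-(a(r)-n)^2/(2b(r))}=\e{-b(r)c^2/2}$ from completing the square exactly cancels the blow-up $\e{b(r)c^2/2}$, leaving $o(b(r)^{-1/2})$ uniformly in $n$. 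Even simpler is to avoid the contour shift altogether: write the central integral as the full-line Gaussian Fourier transform $\int_{\Real}\e{\iu\theta(a(r)-n)-\theta^2 b(r)/2}\,d\theta=\sqrt{2\pi/b(r)}\,\e{-(a(r)-n)^2/(2b(r))}$ minus the tail $\int_{|\theta|>\theta_0}$, whose modulus is at most $\int_{|\theta|>\theta_0}\e{-b(r)\theta^2/2}\,d\theta=o(b(r)^{-1/2})$ with no contour deformation at all.
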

In particular, by choosing any $r$ such that $(a(r)- n)^2/b(r) = \bigO{1}$ we get the first asymptotic order of $[x^n]F(x)$. This allows us to compare \emph{different} coefficients $[x^{n-k}]F(x)$ and $[x^n]F(x)$ using the \emph{identical} saddle-point $r$, a simple yet impactful fact we will use numerous times later.
\subsection{Asymptotics and Estimates for Power Series} 
\label{sec:power_series_hayman}

First we state the following auxiliary lemma which will help us to compute asymptotic bounds for the sum $\sum_{j\ge 1}j^{\beta-1} r^{\gamma j}C^{(\gamma)}((\eul^{-\chi})^j)$ for $\beta\in\Nat,\gamma\in\Nat_0$ and as $\chi\to 0$. In the proof we use the Euler-MacLaurin summation formula (and the computations are inspired by~\cite[Appendix A]{McKinley2020_preprint}). 
\begin{lemma}
\label{lem:sum_k^a/(1-x)^b_different_regimes_proof_hayman}
Let $\beta,\gamma\in\Real_0^+$. Then, as $\chi\to 0$,
\begin{align*}
    \sum_{k\ge 1} \frac{k^\gamma \eul^{-\chi k}}{(1-\eul^{-\chi k})^\beta} 
    \sim \begin{dcases*}
        d_1 \cdot \chi^{-(\gamma+1)}
        , &$\beta<1+\gamma$ \\ 
        \chi^{-(\gamma+1)}\ln( \chi^{-1}), & $\beta= 1 + \gamma $ \\
        d_2\cdot \chi^{-\beta}, & $\beta> 1 + \gamma $
    \end{dcases*},
\end{align*}
where, letting $\zeta$ denote the Zeta-function,
\begin{align*}
    d_1 = \int_0^\infty \frac{t^\gamma\eul^{-t}}{(1-\eul^{-t})^\beta} dt \quad\text{and} \quad 
    d_2 = \zeta(\beta-\gamma).
\end{align*}
\end{lemma}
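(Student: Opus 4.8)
The natural tool here is the Euler--Maclaurin summation formula applied to the function $g(t) = t^\gamma \eul^{-\chi t}/(1-\eul^{-\chi t})^\beta$, together with a change of variables $u = \chi t$ that turns the sum into (approximately) $\chi^{-(\gamma+1)}$ times a Riemann sum for $\int_0^\infty u^\gamma \eul^{-u}(1-\eul^{-u})^{-\beta}\,du$. The whole issue is whether that integral converges, and this is exactly what splits the statement into three regimes: near $u = 0$ the integrand behaves like $u^\gamma \cdot u^{-\beta} = u^{\gamma-\beta}$, so the integral converges at the origin iff $\gamma - \beta > -1$, i.e.\ $\beta < \gamma + 1$; it diverges logarithmically when $\beta = \gamma+1$; and it diverges polynomially when $\beta > \gamma+1$. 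In the last case the sum is dominated by the small-$k$ terms (where $1-\eul^{-\chi k}\approx \chi k$), which is where the $\zeta(\beta-\gamma)$ comes from.

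\textbf{Case $\beta < 1+\gamma$.} Here I would write $\sum_{k\ge 1} g(k)$ and compare it to $\int_0^\infty g(t)\,dt = \chi^{-(\gamma+1)} d_1$ via Euler--Maclaurin. The point is that $g$ is eventually monotone decreasing and the integral is finite (the integrand is integrable at $0$ by the condition $\beta < \gamma+1$ and at $\infty$ because of the exponential decay, at least once $\chi$ is small so $\eul^{-\chi t}$ wins). The Euler--Maclaurin correction terms involve $g(0^+)$ and derivatives of $g$; one checks these are of lower order than $\chi^{-(\gamma+1)}$ as $\chi\to 0$ (they scale like $\chi^{-\beta}$ or $\chi^{-\gamma}$ or are bounded, all dominated by $\chi^{-(\gamma+1)}$ since $\beta < \gamma+1$ and $\gamma < \gamma+1$). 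Alternatively, and perhaps more cleanly, a dominated-convergence argument on the rescaled sum $\chi^{\gamma+1}\sum_{k\ge 1} g(k) = \sum_{k\ge1} \chi\cdot (\chi k)^\gamma \eul^{-\chi k}(1-\eul^{-\chi k})^{-\beta}$, recognizing the right-hand side as a Riemann sum with mesh $\chi$ converging to $\int_0^\infty u^\gamma\eul^{-u}(1-\eul^{-u})^{-\beta}\,du = d_1$, works: one needs an integrable dominating function for the summand uniformly in small $\chi$, using $1-\eul^{-v}\ge c\min(v,1)$ to control the behaviour near $0$.

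\textbf{Case $\beta = 1+\gamma$ and case $\beta > 1+\gamma$.} For $\beta = 1+\gamma$ the integral $\int_0^\infty u^{\gamma-\beta}\cdots\,du$ diverges logarithmically at $0$, so I would split the sum at $k \asymp 1/\chi$: for $k \lesssim 1/\chi$ use $1-\eul^{-\chi k}\asymp \chi k$, giving a contribution $\asymp \sum_{1\le k\lesssim 1/\chi} k^\gamma (\chi k)^{-\beta} = \chi^{-\beta}\sum_{1\le k \lesssim 1/\chi} k^{\gamma-\beta} = \chi^{-\beta}\sum k^{-1} \asymp \chi^{-\beta}\ln(1/\chi) = \chi^{-(\gamma+1)}\ln(1/\chi)$; the tail $k\gtrsim 1/\chi$ is $O(\chi^{-(\gamma+1)})$ by comparison with a convergent integral, hence lower order. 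For $\beta > 1+\gamma$ the same splitting shows the head $\sum_{k\le 1/\chi} k^\gamma(\chi k)^{-\beta}$ is now a convergent series $\chi^{-\beta}\sum_{k\ge1} k^{\gamma-\beta} = \chi^{-\beta}\zeta(\beta-\gamma)$ up to lower-order corrections from replacing $1-\eul^{-\chi k}$ by $\chi k$ (the relative error is $O(\chi k)$ on each term, summable to something of order $\chi^{-\beta+1} = o(\chi^{-\beta})$) and from the tail. So $d_2 = \zeta(\beta-\gamma)$, which makes sense as $\beta - \gamma > 1$ so the $\zeta$ value is finite.

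\textbf{Main obstacle.} The real work is making the error estimates in Euler--Maclaurin (or the uniform integrable domination in the Riemann-sum approach) rigorous and checking they are genuinely lower-order in \emph{each} regime simultaneously — in particular handling the singularity of $g$ and its derivatives at $t=0$ carefully, since $g(t)\sim t^{\gamma-\beta}$ there and the derivatives are even more singular. The cited computation in \cite[Appendix A]{McKinley2020_preprint} presumably handles a similar estimate, so I would follow that template: apply Euler--Maclaurin starting from a fixed integer cutoff (say $k_0$) rather than from $0$ to sidestep the singularity, bound $\sum_{1\le k<k_0} g(k)$ separately (it is $O(\chi^{-\beta})$, the correct order or smaller in every case), and then control $\sum_{k\ge k_0} g(k)$ against $\int_{k_0}^\infty g(t)\,dt$ plus a remainder governed by $\int_{k_0}^\infty |g'(t)|\,dt$, tracking the $\chi$-dependence of each piece.
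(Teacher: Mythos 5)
Your Riemann-sum/dominated-convergence argument for $\beta<1+\gamma$ is exactly the paper's, so that case needs no comment. For the other two regimes you propose a genuinely different and more elementary route. The paper stays within the Euler--Maclaurin framework: it evaluates $\int_1^\infty g(t)\,dt$ by dominated convergence, tracks $g(1)/2$ and the remainder $\int_1^\infty g'P_1$, and then -- to identify the constant $\zeta(\beta-\gamma)$ -- invokes a \emph{second} Euler--Maclaurin identity for $\sum_{k\ge1}k^{-(\beta-\gamma)}$ so as to evaluate $\int_1^\infty t^{-(\beta-\gamma+1)}P_1(t)\,dt$. Your approach -- replacing $1-\eul^{-\chi k}$ by $\chi k$ on the head $k\lesssim 1/\chi$ and reading off $\chi^{-\beta}\sum_{k\ge1}k^{\gamma-\beta}=\chi^{-\beta}\zeta(\beta-\gamma)$ directly, with the tail dominated by $\sum_{k>1/\chi}k^\gamma\eul^{-\chi k}=\mathcal{O}(\chi^{-\gamma-1})$ -- reaches the same constant in one step and avoids the auxiliary Euler--Maclaurin bookkeeping entirely. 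The trade-off is that the paper uses one uniform device across both divergent regimes, whereas you need separate ad-hoc splittings; but your route is clearly shorter and more transparent once one checks the error terms.

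Two points need tightening before the sketch is a proof. First, in the borderline case $\beta=1+\gamma$ your chain of $\asymp$'s establishes only $\sum g(k)\asymp\chi^{-(\gamma+1)}\ln(\chi^{-1})$, whereas the lemma asserts $\sim$: you must get the leading constant $1$. The standard fix is to split at $\varepsilon/\chi$ rather than $1/\chi$, use the uniform expansion $\eul^{-\chi k}(1-\eul^{-\chi k})^{-\beta}=(\chi k)^{-\beta}\bigl(1+\mathcal{O}(\varepsilon)\bigr)$ valid for $\chi k\le\varepsilon$, check that the middle range $\varepsilon/\chi<k\lesssim 1/\chi$ contributes only $\mathcal{O}(\chi^{-\beta})$, and then let $\varepsilon\to0$ -- but as written your sketch does not contain that step, and $\asymp$ is not the claim. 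Second, in the case $\beta>1+\gamma$ your statement that the correction from replacing $1-\eul^{-\chi k}$ by $\chi k$ is "of order $\chi^{-\beta+1}$" is not uniformly correct: when $1<\beta-\gamma<2$ one has $\sum_{k\le 1/\chi}k^{\gamma-\beta+1}\asymp\chi^{\beta-\gamma-2}$, so the error is $\Theta(\chi^{-\gamma-1})$, not $\Theta(\chi^{-\beta+1})$. The conclusion that it is $o(\chi^{-\beta})$ survives because $\beta>\gamma+1$, but the intermediate bound as stated would mislead a reader, and the sub-case distinction (according to whether $\beta-\gamma$ is smaller than, equal to, or larger than $2$) should be made explicit.
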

\begin{proof}
Define $g(t) := t^\gamma\eul^{-\chi t}/(1-\eul^{-\chi t})^\beta$.
For $\beta-\gamma<1$ the integral $\int_0^\infty t^\gamma\eul^{-t}/(1-\eul^{-t})^\beta dt$ exists (since the integrand is asymptotically $t^{-(\beta-\gamma)}$ as $t\to 0$) so that by convergence of the Riemann sum to the corresponding integral we obtain
\[
    \sum_{k\ge 1} g(k)
    = \chi^{-(\gamma+1)} \cdot \chi \cdot \sum_{k\ge 1} \frac{(\chi k)^\gamma\eul^{-\chi k}}{(1-\eul^{-\chi k})^\beta}
    \sim \chi^{-(\gamma +1)} \int_0^\infty \frac{t^\gamma\eul^{-t}}{(1-\eul^{-t})^\beta}dt.
\]
Next we consider the case $\beta-\gamma> 1$. Let $P_1(x) = x-\floor{x}-1/2$. Then the Euler-Maclaurin formula, see for example~\cite[Ch.~9.5]{Graham1994}, gives us
\begin{align}
    \label{eq:euler_maclaurin_hayman}
    \sum_{k\ge 1}g(k) 
    = \int_1^\infty g(t)dt  + \frac{g(1)}{2} + \int_1^\infty g'(x) P_1(x)dx.
\end{align}
We will determine the first integral by using dominated convergence.
Note that for $t\ge 1$
\[
    \chi^\beta g(t) t^{\beta-\gamma} = (\chi t)^\beta \eul^{-\chi t}/(1-\eul^{-\chi t})^\beta.
\]
Thus
\begin{align}
    \label{eq:pointwise_convergence_hayman}
    \lim_{\chi\to 0}\chi^\beta g(t) = t^{\gamma-\beta},
    \quad
    t \ge 1.
\end{align}
Further, the continuous function $z^\beta\eul^{-z}/(1-\eul^{-z})^\beta$ tends to $1$ as $z\to 0$ and to $0$ as $z\to\infty$. Hence there is some $A>0$ such that \begin{align}
    \label{eq:upper_bound_dom_conv_hayman}
    \chi^\beta g(t) \le {A}t^{\gamma-\beta},
    \quad
    t\ge 1.
\end{align}
Thus, by dominated convergence,~\eqref{eq:pointwise_convergence_hayman}, and the fact that $\int_1^\infty t^{-(\beta-\gamma)}dt$ exists 
\begin{align}
    \label{eq:pointwise_convergence_hayman_next_step}
    \int_1^\infty g(t)dt = \chi^{-\beta}  \int_1^\infty \chi^\beta g(t) dt 
    \sim \chi^{-\beta} \int_1^\infty t^{-(\beta-\gamma)} dt
    = \chi^{-\beta}\frac{1}{\beta-\gamma-1}.
\end{align}
The next term in~\eqref{eq:euler_maclaurin_hayman} is $g(1)/2 \sim \chi^{-\beta}/2$. Moreover,
\begin{align}
    \label{eq:int_g'_P_1_hayman}
    \int_1^\infty g'(t) P_1(t)dt
    = \int_1^\infty \left( \gamma\frac{t^{\gamma-1}\eul^{-\chi t}}{(1-\eul^{-\chi t})^\beta} 
    - \chi\frac{t^{\gamma}\eul^{-\chi t}}{(1-\eul^{-\chi t})^\beta}
    - \chi\beta \frac{t^{\gamma}\eul^{-2\chi t}}{(1-\eul^{-\chi t})^{\beta+1}}\right)P_1(t) dt.
\end{align}
Note that $\beta-\gamma>1$ implies that $\beta-(\gamma-1)>1$ and $(\beta+1)-\gamma>1$ so that as before
\[
    \int_1^\infty g'(t) P_1(t)dt
    \sim \chi^{-\beta}(\gamma-\beta) \int_1^\infty t^{-(\beta-\gamma +1)} P_1(t)dt 
    - \chi^{-\beta+1} \int_1^\infty t^{-(\beta-\gamma)}P_1(t)dt.
\]
As $\beta - \gamma > 1$, the last term is $\bigO{\chi^{-\beta+1}} = o(\chi^{-\beta})$. Moreover, by using again Euler-Maclaurin summation
\[
    \sum_{k\ge 1}\frac{k^{-(\beta-\gamma)}}{\gamma-\beta} 
    = \int_1^\infty \frac{t^{-(\beta-\gamma)}}{\gamma-\beta} dt + \frac{1}{2(\gamma-\beta)} 
    + \int_1^\infty t^{-(\beta-\gamma+1)}P_1(t)dt.
\]
Computing the integral and rearranging the terms yields
\[
    \int_1^\infty t^{-(\beta-\gamma+1)}P_1(t)dt
    = \frac{\zeta(\beta-\gamma)}{\gamma-\beta} + \frac1{(\gamma-\beta)(\gamma-\beta+1)} -\frac{1}{2(\gamma-\beta)} .
\]
The claim follows for $\beta-\gamma>1$. 
Finally, let us consider the case $\beta = \gamma + 1$. Set $a = \ln(\chi^{-1})^{-1/(2(\gamma+1))} = o(1)$. We have that
\[
    \chi^{\gamma+1}\int_1^\infty g(t) dt
    = \int_\chi^\infty \frac{t^\gamma \eul^{-t}}{(1-\eul^{-t})^{\gamma+1}} dt
    =\left( \int_{\chi}^a + \int_a^1 + \int_1^\infty\right) \frac{t^\gamma \eul^{-t}}{(1-\eul^{-t})^{\gamma+1}} dt
    =: I_1 + I_2 + \bigO{1}.
\]
In $I_1$ we use that $t=o(1)$ to obtain that $I_1 \sim \int_{\chi}^a t^{-1} dt = \ln a - \ln \chi \sim \ln(\chi^{-1})$. In $I_2$ we estimate $I_2 \le (1-\eul^{-a})^{-(\gamma+1)}\int_a^1 t^\gamma\eul^{-t} = \Theta( \ln(\chi^{-1})^{1/2}) =o(\ln(\chi^{-1}))$.
Hence
\begin{align}
    \label{eq:gamma+1=beta_case_ln_int}
    \int_1^\infty g(t)dt \sim \chi^{-(\gamma+1)}\ln(\chi^{-1}).
\end{align}
Further $g(1) \sim \chi^{-\beta} = \chi^{-(\gamma+1)}$ and by estimating $\lvert P_1(x)\rvert \le 1$ in~\eqref{eq:int_g'_P_1_hayman} we get 
\[
    \bigg\lvert \int_1^\infty g'(t)P_1(t)dt \bigg\rvert
    \le \int_1^\infty \left( \gamma \frac{t^{\gamma-1}\eul^{-\chi t}}{(1-\eul^{-\chi })^\beta} 
    + \chi \frac{t^\gamma \eul^{-\chi t}}{(1-\eul^{-\chi t})^\beta} + \chi \beta \frac{t^\gamma \eul^{-2\chi t}}{(1-\eul^{-\chi t})^{\beta+1}}
    \right)dt
    =: J_1 + J_2 + J_3.
\]
Since $\beta = \gamma +1 $ we have that $\gamma-1-\beta = -2$ and the integral $\int_1^\infty t^{-2}dt$ exists. Hence, analogous to~\eqref{eq:pointwise_convergence_hayman}--\eqref{eq:pointwise_convergence_hayman_next_step} we obtain by dominated convergence that $J_i = \bigO{\chi^{-(\gamma+1)}}=o(\chi^{-(\gamma+1)}\ln(\chi^{-1}))$ for $i=1,3$. Analogous to~\eqref{eq:gamma+1=beta_case_ln_int} we obtain that
\[
    \chi^{-1}  J_2
    =  \int_1^\infty g(t)dt
    \sim\chi^{-(\gamma+1)}\ln(\chi^{-1})
\]
implying that $J_2 = o(\chi^{-(\gamma+1)}\ln(\chi^{-1})).$
This finishes the proof.
\end{proof}
In the following two lemmas we consider an eventually positive, slowly varying function $h:\Real^+\to\Real_0^+$ which means that $\lim_{x\to\infty}h(\lambda x)/h(x)=1$ for any $\lambda>0$. An immediate consequence from~\cite[Theorem 1.3.1]{Bingham1987} for such functions is that they are subpolynomial, that is, for any $\delta>0$ and $x_0$ sufficiently large
\begin{align}
    \label{eq:sv_estimate_hayman}
	x^{-\delta} 
	\le h(x) 
	\le x^\delta
	\quad\text{and}\quad 
	\Big(\frac{x}{x'}\Big)^{\delta}  
	\le \frac{h(x')}{h(x)} 
	\le \Big(\frac{x'}{x}\Big)^{\delta} 
	\quad \text{ for all } 
	x' \ge x \ge {x}_0.
\end{align}
The next result is a consequence of the famous \emph{Karamata's Tauberian Theorem} which was originally derived in~\cite{Karamata1931}. 
\begin{lemma}[{\cite[Thm.~1.7.1]{Bingham1987}}]
\label{lem:asymptotics_expansive_sum}
Let $\al  > 0$ and $h$ be an eventually positive and slowly varying function. Then, as $\chi \to0$
\[
    \sum_{k\ge 1} h(k)k^{\al-1} \eul^{-\chi k}
    \sim \Gamma(\al) h(\chi^{-1})\chi^{-\al}.
\]
\end{lemma}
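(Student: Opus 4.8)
The plan is to read off the statement from Karamata's Tauberian Theorem. Set $a_k := h(k)\,k^{\al-1}$; since $h$ is eventually positive, only finitely many $a_k$ can fail to be non-negative, and those terms contribute $\bigO{1}$ both to the sum in question and to the summatory function introduced below, hence are negligible against $\chi^{-\al}h(\chi^{-1})$ (note $h(\chi^{-1})\chi^{-\al}\to\infty$ by the subpolynomial bound~\eqref{eq:sv_estimate_hayman}). Let $U(x) := \sum_{1\le k\le x} a_k$, so that the quantity to be estimated is the Laplace--Stieltjes transform $\sum_{k\ge1} a_k\eul^{-\chi k} = \int_{[1,\infty)}\eul^{-\chi t}\,dU(t)$, and $U$ is, up to the harmless finite part, non-decreasing.

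First I would determine the asymptotics of the summatory function: $U(x)\sim \al^{-1}x^{\al}h(x)$ as $x\to\infty$. This is the standard summation theorem for regularly varying sequences --- the map $t\mapsto t^{\al-1}h(t)$ is regularly varying of index $\al-1>-1$ --- see \cite[Prop.~1.5.8]{Bingham1987}; the passage from integer argument to real $x$ is immediate since $U$ is a step function and $h$ varies slowly. Second I would apply Karamata's Tauberian Theorem \cite[Thm.~1.7.1]{Bingham1987} with exponent $\al$ and slowly varying function $\ell(x):=\Gamma(\al)h(x)$: because $\Gamma(\al+1)=\al\Gamma(\al)$, the relation just proved reads $U(x)\sim x^{\al}\ell(x)/\Gamma(\al+1)$, and the theorem gives its equivalence with $\int_{[1,\infty)}\eul^{-\chi t}\,dU(t)\sim\chi^{-\al}\ell(\chi^{-1})=\Gamma(\al)\,h(\chi^{-1})\,\chi^{-\al}$ as $\chi\to0^+$, which is exactly the claim.

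The only ingredient with real content is the summatory asymptotics of the first step, and even there the substance is external (the theory of regular variation); the half of Karamata's theorem we invoke --- passing from the summatory function to its transform --- needs no further regularity of $(a_k)$ beyond non-negativity, so there is no Tauberian hypothesis to check. If one prefers a self-contained argument, the proof of Lemma~\ref{lem:sum_k^a/(1-x)^b_different_regimes_proof_hayman} can be imitated: split $\sum_k a_k\eul^{-\chi k}$ at the scales $\eps\chi^{-1}$ and $\eps^{-1}\chi^{-1}$; on the middle range use the uniform convergence theorem for slowly varying functions to replace $h(k)$ by $(1+o(1))h(\chi^{-1})$ and recognise the Riemann sum $\chi\sum_k(\chi k)^{\al-1}\eul^{-\chi k}\to\int_0^\infty t^{\al-1}\eul^{-t}\,dt=\Gamma(\al)$; and bound the two tails using the subpolynomial estimate $h(k)\le k^{\delta}$ from~\eqref{eq:sv_estimate_hayman} together with the exponential factor, sending $\eps\to0$ at the end. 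The main obstacle in that route is making the tail bounds uniform in $\chi$, but this is routine given~\eqref{eq:sv_estimate_hayman}.
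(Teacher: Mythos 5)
Your proof is correct and takes essentially the same route the paper does: the paper states the lemma with a bare citation to Karamata's Tauberian Theorem, and you simply spell out the short reduction — first obtaining the summatory asymptotics $U(x)\sim\al^{-1}x^{\al}h(x)$ via the Karamata summation theorem for regularly varying sequences, then invoking the Abelian half of BGT Thm.~1.7.1 with $\ell=\Gamma(\al)h$, handling the finitely many possibly-negative terms from "eventually positive" as an $\bigO{1}$ nuisance.
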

The following lemma shows that slowly varying functions appearing in integrals can often be handled with quite easily.
\begin{lemma}[{\cite[Prop. 2]{Freiman2005}}]
\label{lem:asymptotics_expansive_integral_arb_start}
Let $\al>0$ and $h$ be an eventually positive and slowly varying function. Let $(b_n)_{n\in\Nat},(t_n)_{n\in\Nat}$ be sequences such that $b_n\to b\in(0,\infty]$ and $b_nt_n\to\infty$ as $n\to\infty$. Then, as $n\to\infty$,
\[
    \int_{b_n}^\infty h(xt_n)x^{\al}\eul^{-x}dx
    \sim h(b_nt_n) \int_{b_n}^\infty x^{\al}\eul^{-x}dx.
\]
\end{lemma}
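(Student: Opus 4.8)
The plan is to normalize the integration variable so that the two regimes $b_n\to b<\infty$ and $b_n\to\infty$ become a single estimate, and then to split the resulting domain into a compact piece near the lower endpoint (where $h$ is controlled uniformly) and a tail (which carries a vanishing fraction of the mass). First I would substitute $x=b_ny$ and write $u_n:=b_nt_n$, which tends to $\infty$ by hypothesis. Both sides of the claimed relation then pick up the common factor $b_n^{\al+1}$, so that, since $h(u_n)>0$ for $n$ large, it suffices to prove
\[
    \int_1^\infty \frac{h(u_ny)}{h(u_n)}\, y^\al \eul^{-b_ny}\, dy \;\sim\; \int_1^\infty y^\al \eul^{-b_ny}\, dy \qquad (n\to\infty).
\]
Two elementary facts are in place for $n$ large: first, $u_n$ exceeds the threshold in~\eqref{eq:sv_estimate_hayman} (a Potter-type bound), so for every $\delta>0$ one has $h(u_ny)/h(u_n)\le y^{\delta}$ for all $y\ge 1$; second, $b_n\ge c$ for some constant $c>0$, because $b_n\to b\in(0,\infty]$.

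Next I would fix $R>1$ and split $[1,\infty)=[1,R]\cup[R,\infty)$. On $[1,R]$ the ratio $h(u_ny)/h(u_n)$ tends to $1$ uniformly in $y$; this follows from the uniform convergence theorem for slowly varying functions (see~\cite{Bingham1987}), or self-containedly from~\eqref{eq:sv_estimate_hayman} applied with $\delta$ arbitrarily small, which sandwiches the ratio between $R^{-\delta}$ and $R^{\delta}$. Hence $\int_1^R (h(u_ny)/h(u_n))\,y^\al\eul^{-b_ny}\,dy=(1+o(1))\int_1^R y^\al\eul^{-b_ny}\,dy$ as $n\to\infty$ (with $R$, $\delta$ fixed). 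For the tail, the Potter-type bound gives $\int_R^\infty (h(u_ny)/h(u_n))\,y^\al\eul^{-b_ny}\,dy\le \int_R^\infty y^{\al+\delta}\eul^{-b_ny}\,dy$, and then a crude computation — bounding $y^{\al+\delta}\eul^{-cy/4}$ by a constant, extracting the factor $\eul^{-cR/4}$, and using the lower bound $\int_1^\infty y^\al\eul^{-b_ny}\,dy\ge \int_1^{1+1/b_n} y^\al\eul^{-b_ny}\,dy\ge \eul^{-1}\eul^{-b_n}/b_n$ for the denominator — shows
\[
    \int_R^\infty y^{\al+\delta}\eul^{-b_ny}\, dy \;\le\; \eta(R)\int_1^\infty y^\al\eul^{-b_ny}\, dy
\]
for all large $n$, with $\eta(R)\to 0$ as $R\to\infty$ and the bound uniform in $n$ (this is where $b_n\ge c$ is essential). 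The same estimate with $\delta=0$ controls the tail $\int_R^\infty y^\al\eul^{-b_ny}\,dy$ that was dropped from the right-hand side.

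Finally I would assemble the pieces: dividing through by $\int_1^\infty y^\al\eul^{-b_ny}\,dy$ and letting first $n\to\infty$ for fixed $R$, then $R\to\infty$ (and, in the self-contained variant, $\delta\to0$), one obtains that both the $\limsup$ and the $\liminf$ of the ratio of the two integrals equal $1$, which is exactly the assertion. I expect the only genuine obstacle to be making the tail estimate uniform across the two very different shapes of the weight $y^\al\eul^{-b_ny}\,dy$ — spread out when $b_n$ stays bounded, sharply concentrated at $y=1$ when $b_n\to\infty$ — and the point that renders this routine is simply that $b_n$ is bounded away from $0$, so the exponential factor always provides a definite decay rate that dominates the polynomial correction $y^{\al+\delta}$ coming from $h$.
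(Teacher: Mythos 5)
The paper does not prove this lemma itself — it is imported verbatim from \cite[Prop.~2]{Freiman2005} — so there is no in-paper argument to compare against; what matters is whether your proof stands on its own, and it does. After the substitution $x=b_ny$, the reduction to showing $\int_1^\infty (h(u_ny)/h(u_n))\,y^\alpha e^{-b_ny}\,dy \sim \int_1^\infty y^\alpha e^{-b_ny}\,dy$ with $u_n=b_nt_n\to\infty$ is exact, and your two ingredients are both correct: on $[1,R]$ either the Uniform Convergence Theorem for slowly varying functions or, self-containedly, the Potter-type sandwich $y^{-\delta}\le h(u_ny)/h(u_n)\le y^\delta$ from \eqref{eq:sv_estimate_hayman} (letting $\delta$ shrink after fixing $R$) gives uniform convergence of the ratio to $1$; and on $[R,\infty)$ the Potter upper bound with any fixed $\delta$ combined with the elementary lower bound $\int_1^\infty y^\alpha e^{-b_ny}\,dy\ge e^{-1}e^{-b_n}/b_n$ — valid because $\alpha>0$ and $y\ge1$ — yields a tail ratio $\eta(R)$ independent of $n$ with $\eta(R)\to0$, the uniformity resting precisely on $b_n\ge c>0$, as you observe. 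The final assembly (fix $\epsilon$, choose $R$, then let $n\to\infty$) is the standard two-limit argument and closes the proof.
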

The last lemma in this subsection is a simple estimate for series with non-negative coefficients and a positive radius of convergence.
\begin{lemma}[{\cite[Lem.~2.7]{Panagiotou2022}}]
\label{lem:C(x)_approx}
Let $m\in\Nat$. Let $F(x) = \sum_{k\ge m}f_kx^k$ be a power series with non-negative coefficients such that $f_m>0$ and radius of convergence $\rho>0$. Let $0<\eps<1$. Then there exists $A>0$ such that for all $0 \le z\le (1-\eps)\rho$
\[
	1 \le \frac{F(z)}{f_m z^{m}} \le 1 + A z.
\]
\end{lemma}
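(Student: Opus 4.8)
The plan is to establish the two inequalities separately; both amount to elementary manipulations of the defining power series, and the only point requiring genuine care is the \emph{uniformity} of the constant $A$ over the whole interval $[0,(1-\eps)\rho]$. The lower bound is immediate: since all coefficients $f_k$ are non-negative and $z\ge 0$, every summand of $F(z)=\sum_{k\ge m}f_kz^k$ is non-negative, so $F(z)\ge f_mz^m$, which is exactly $F(z)/(f_mz^m)\ge 1$ for $z>0$; the $z=0$ case is trivial if one reads the ratio as the value at $0$ of the power series $\sum_{j\ge 0}(f_{m+j}/f_m)z^j$, namely $1$.

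For the upper bound I would isolate the leading term and factor out one more power of $z$:
\[
    F(z)-f_mz^m
    = \sum_{k\ge m+1}f_kz^k
    = z^{m+1}\,g(z),
    \qquad
    g(z):=\sum_{j\ge 0}f_{m+1+j}\,z^{j}.
\]
The key observation is that $g$ has non-negative coefficients and radius of convergence $\rho$ (shifting and dropping finitely many initial indices of $F$ does not change it), hence $g$ is non-decreasing on $[0,\rho)$; in particular, because $(1-\eps)\rho<\rho$ lies strictly inside the disc of convergence, $B:=g\big((1-\eps)\rho\big)$ is finite. Taking $A:=\max\{1,\,B/f_m\}>0$, we get $g(z)\le B\le Af_m$ for every $0\le z\le(1-\eps)\rho$, and therefore
\[
    F(z)
    = f_mz^m + z^{m+1}g(z)
    \le f_mz^m + Af_mz^{m+1}
    = f_mz^m\big(1+Az\big),
\]
which yields $F(z)/(f_mz^m)\le 1+Az$ after dividing (the case $z=0$ again being immediate).

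I do not expect any real obstacle: this is a warm-up estimate used only as a black box later. If one must name the single delicate point, it is the passage from a pointwise inequality to a uniform one, handled by the monotonicity of $g$ together with the \emph{strict} inequality $(1-\eps)\rho<\rho$, which is precisely what guarantees that the endpoint value $B=g\big((1-\eps)\rho\big)$ is finite. Should $\rho=\infty$, one reads $(1-\eps)\rho$ as an arbitrary finite bound $M$ and runs the identical argument on $[0,M]$.
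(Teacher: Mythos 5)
Your proof is correct. Note that the paper does not prove this lemma at all --- it is imported verbatim from \cite[Lem.~2.7]{Panagiotou2022} and used as a black box --- so there is no in-paper argument to compare against; your derivation (non-negativity of coefficients for the lower bound, and factoring $F(z)-f_mz^m=z^{m+1}g(z)$ with $g$ monotone and finite at $(1-\eps)\rho$ for the upper bound) is the standard one and is complete, including the correct observation that dropping/shifting finitely many terms preserves the radius of convergence so that $B=g((1-\eps)\rho)$ is finite.
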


\section{Proofs}
\label{sec:proofs}
Here we collect the proofs of all our main results. In Section~\ref{sec:proof_h_admissibility} we state and prove Lemma~\ref{lem:h_admissible_gen_series}. Section~\ref{sec:proof_main} contains all the proofs of the statements in Section~\ref{sec:main_results}.

\subsection{Proof of $H$-admissibility}
\label{sec:proof_h_admissibility}
In this section we prove the following lemma, which is the backbone of the other forthcoming proofs in this section, and on the way some properties of functions related to~\eqref{eq:a,b,c_hayman_def}. 
\begin{lemma}
\label{lem:h_admissible_gen_series}
For $\al>0$, $0<\rho\le 1$ and $0<\eps<\al/3$ let $(c_k)_{k\in\Nat}\in\F(2\al/3+\eps,\al,\rho)$. Then
\[
    S(x)C(x)^\ell \quad \text{and}\quad  G(x) \prod_{1\le i\le \ell}\sum\nolimits_{j\ge 1}j^{p_i}C(x^j)
\]
are $H$-admissible for $\ell\in\Nat_0$ and $(p_1,\dots,p_\ell)\in\Nat_0^\ell$.
\end{lemma}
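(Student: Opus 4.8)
The plan is to verify the three Hayman conditions \ref{item:hayman_capture}--\ref{item:hayman_decay} for each of the two families of functions, working in the uniformized variable $x = \rho\eul^{-\chi}$ with $\chi \to 0^+$. The crucial reduction is that all the relevant series — $S(x)C(x)^\ell$ and $G(x)\prod_i \sum_{j\ge1} j^{p_i} C(x^j)$ — have the form $F(x) = \eul^{f(x)}$ where $f(x)$ differs from the ``core'' exponent ($C(x)$ in the set case, $\sum_{j\ge 1} C(x^j)/j$ in the multiset case) by a \emph{lower-order} correction: an additive term $\ell \ln C(x)$ in the first case, and $\sum_i \ln\big(\sum_{j\ge1} j^{p_i} C(x^j)\big)$ in the second. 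So the strategy is: (i) compute the asymptotics of $C(x)$, $xC'(x)$, $x^2 C''(x)$, $x^3 C'''(x)$ as $\chi\to 0$ using Lemma~\ref{lem:asymptotics_expansive_sum} together with the oscillating-expansive bounds \eqref{eq:oscilatting_expansive} (which give two-sided polynomial control even without a genuine slowly varying $h$), and likewise for the multiset sums $\sum_{j\ge1} q^{\gamma j} C^{(\gamma)}(q^j)$ via Lemma~\ref{lem:sum_k^a/(1-x)^b_different_regimes_proof_hayman}; (ii) show the correction terms contribute only $O(\ln C(x))$ and $O(\ln\ln(\cdot))$ to $a(r)$ and to $b(r)$, hence are asymptotically negligible against the core, which behaves like a positive power of $\chi^{-1}$; (iii) deduce that $a,b$ for $F$ are asymptotically equal to those of the core function, reducing everything to $H$-admissibility of $S$ and $G$ themselves.

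For the \textbf{set case} the core is $f_0(x) = C(x)$, so $a(x) = xC'(x)$, $b(x) = x^2 C''(x) + xC'(x)$, $c(x) = x^3C'''(x) + 3x^2 C''(x) + xC'(x)$. Using \eqref{eq:oscilatting_expansive}, $x C'(x) = \sum_k k c_k x^k$ is sandwiched between constant multiples of $\chi^{-\al_1}$-type and $\chi^{-\al_2}$-type quantities — more precisely one gets $a(x), b(x) \to \infty$ and $b(x) = \Theta(x^2 C''(x)) = \Theta(\chi^{-1} \cdot xC'(x))$, and also $c(x) = O(\chi^{-1} b(x))$; these give the Capture condition immediately and feed the Locality/Decay split. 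The standard Hayman argument then applies: choose $\theta_0(r)$ so that $\theta_0^2 b(r) \to \infty$ but $\theta_0^3 c(r) \to 0$ (possible since $c/b^{3/2} = O(\chi^{-1} b^{-1/2}) \to 0$, the key quantitative input), giving Locality from the third-order Taylor remainder in \eqref{eq:h_taylor_hayman_def}; and for Decay one bounds $|C(r\eul^{\iu\theta}) - C(r)|$ away from $\theta = 0$ by exploiting that $c_1 > 0$ forces $\mathrm{Re}\,C(r) - \mathrm{Re}\,C(r\eul^{\iu\theta}) \ge c_1 r(1-\cos\theta) \gg b(r)^{-1}\cdot(\text{const})$... actually one needs the stronger statement that this real-part gap grows faster than $\ln b(r)$, which follows from looking at a range of indices $k$ of size $\Theta(\chi^{-1})$ where $\cos(k\theta)$ is bounded away from $1$. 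Adding the correction $\ell \ln C(x)$ changes $a$ by $\ell\, xC'(x)/C(x)$ and $b$ by a similar $O(xC'(x)/C(x) \cdot \text{stuff})$ term, both $o(b)$ since $C(x)\to\infty$; so all three conditions survive verbatim.

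For the \textbf{multiset case} the core is $f_0(x) = \sum_{j\ge1} C(x^j)/j$, and the relevant derivatives are $x f_0'(x) = \sum_{j\ge1} x^j C'(x^j)$, etc.; these are exactly the sums controlled by Lemma~\ref{lem:sum_k^a/(1-x)^b_different_regimes_proof_hayman} (after substituting the bounds \eqref{eq:oscilatting_expansive} for $c_k$), and one must split into the cases $\rho < 1$ (where only the $j=1$ term matters asymptotically, so the analysis collapses to the set case with saddle point $z_n$) and $\rho = 1$ (where the sum genuinely spreads over many $j$ and the saddle point is $q_n$, with $b(q_n) = \Theta(\sum_{j\ge1} j q_n^{2j} C''(q_n^j))$). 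The correction factors $\sum_{j\ge1} j^{p_i} C(x^j)$ enter $f$ additively as their logarithms; one checks via Lemma~\ref{lem:sum_k^a/(1-x)^b_different_regimes_proof_hayman} that $\sum_j j^{p_i} C(x^j)$ and its logarithmic derivative are of strictly smaller order than $b(x)$, so again negligible. The main obstacle — and where I expect the real work to be — is the \textbf{Decay condition} in the multiset case with $\rho = 1$: one must show $G(\eul^{-\chi+\iu\theta})\prod_i(\cdots) = o(b(\eul^{-\chi})^{-1/2} G(\eul^{-\chi})\prod_i(\cdots))$ uniformly for $\theta_0(\chi) \le |\theta| \le \pi$, which requires controlling $\mathrm{Re}\big(\sum_{j\ge1}C(\eul^{-j\chi+\iu j\theta})/j\big)$ from below over the whole arc; the $j$-sum makes this delicate because for $\theta$ near rational multiples of $2\pi$ many terms $\eul^{\iu j\theta}$ realign, and one has to argue (as in the classical partition case, e.g.\ via a minorant on $1-\cos(j\theta)$ summed against the weights $c_k \eul^{-jk\chi}$) that enough mass remains to beat the merely polynomial factor $b^{1/2} = \chi^{-O(1)}$. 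I would handle this by isolating the $j=1$ contribution, on which $\mathrm{Re}(C(\eul^{-\chi}) - C(\eul^{-\chi+\iu\theta})) \ge c_1 \eul^{-\chi}(1-\cos\theta)$ plus a sum over a block of indices $k \asymp \chi^{-1}$ giving a gain of order $\chi^{-\al_1}(1-\cos\theta)$ or so, which dominates $\ln(\chi^{-1})$; the higher $j$ only help (their real parts are $\le C(\eul^{-j\chi})/j$). Everything else is bookkeeping with the two lemmas above.
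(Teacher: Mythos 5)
Your overall plan coincides with the paper's: verify $H_1$--$H_3$ directly for $F_\ell = S\cdot C^\ell$ and $\tilde F_\ell = G\cdot\prod_i\sum_j j^{p_i}C(x^j)$ at $r=\rho e^{-\chi}$, treat the extra factors as lower-order perturbations of $\ln S$, $\ln G$, and use Lemma~\ref{lem:asymptotics_expansive_sum} / Lemma~\ref{lem:sum_k^a/(1-x)^b_different_regimes_proof_hayman} for the asymptotics; and for the multiset Decay you correctly observe that dropping the $j\ge 2$ terms only helps, so it reduces to the set case. But your sketch of the set-case Decay has a genuine gap, and it is precisely the part you flag as ``where the real work is.'' Your two stated mechanisms do not cover the whole arc $\theta_0\le|\theta|<\pi$. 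Near $\theta\approx\theta_0=\chi^{1+\delta}$, the claim that there is ``a range of indices $k$ of size $\Theta(\chi^{-1})$ where $\cos(k\theta)$ is bounded away from $1$'' is false: then $k\theta\asymp\chi^{\delta}\to 0$, so $\cos(k\theta)\to 1$; the gain in this regime comes from summing the quadratic contributions $c_k r^k(k\theta)^2$ over all $k\lesssim\chi^{-1}$, giving $\Theta(\theta^2\chi^{-(\al_1+2)})=\Theta(\chi^{2\delta-\al_1})$, which explains the constraint $\al/3<\delta<\al/3+\eps/2$. More importantly, for $\theta$ of intermediate size (the paper's $I_2=(\chi,1/2)$, after rescaling by $2\pi$) neither the quadratic nor your single-term $c_1 r(1-\cos\theta)$ does anything, and one must show that in every period of length $1/\theta$ there is at least one index $k$ with $\|\theta k\|\ge 1/4$ (the $Q_j(\theta)$ sets of Freiman--Granovsky) and then run an integral comparison to recover a polynomial-in-$\chi^{-1}$ lower bound. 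This regime is entirely absent from your proposal; without it the Decay condition is not established.

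Two smaller inaccuracies. First, the corrections do \emph{not} change $a(r)$ and $b(r)$ by ``$O(\ln C(x))$'' and ``$O(\ln\ln)$'': for the set case $a_\ell - a_0=\ell\,rC'(r)/C(r)$ is of order roughly $\chi^{-1}$ and $b_\ell-b_0$ is roughly $\chi^{-2}$ in the oscillating window. They are negligible because $A_0(r)\to\infty$ and (by H\"older) $A_1^2\le A_0A_2$, not because they are logarithmic; indeed $c(r)/b(r)$ is only $O(\chi^{-1-\al/3+\eps})$, not $O(\chi^{-1})$. Second, for the Locality condition the third-order Taylor remainder $c_\ell(re^{i\xi})$ contains terms with $A_0(re^{i\xi})$ (resp.\ $A_{0,1+p_i}(re^{i\xi})$) in the denominator, so before you can bound it you need a \emph{lower} bound $|A_0(re^{i\xi})|\sim A_0(r)$ uniformly in $|\xi|\le\theta_0$; this requires its own argument (the paper proves it by splitting the sum at $k=\chi^{-1-\delta/2}$), and your sketch does not mention the step.
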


\subsubsection{Proof of Lemma~\ref{lem:h_admissible_gen_series} in the Set Case} 
\label{sec:proof_lemma_H-adm_set_case}
In this section we prove that $F_\ell(x) = S(x)C(x)^\ell$ is $H$-admissible under the conditions stated in Lemma~\ref{lem:h_admissible_gen_series}, that is, we verify properties~\ref{item:hayman_capture}--\ref{item:hayman_decay} for $F_\ell$ and the corresponding
$$f_\ell = \ln F_\ell = C + \ell \ln C.$$ Before we do so, as a preparation we establish some useful properties of the functions defined in~\eqref{eq:a,b,c_hayman_def}
\begin{align}
\label{eq:def_a_ell}
    a_\ell(x)
    := xf_\ell'(x), 
    \quad b_\ell(x)
    := x^2f_\ell''(x)+xf_\ell'(x)
    \quad\text{and}\quad 
    c_\ell(x)
    := x^3f_\ell '''(x) + 3x^2f_\ell''(x) + xf_\ell'(x) \enspace .
\end{align}
To simplify the notation we also introduce
\begin{align}
    \label{eq:A_s_def_hayman_proof}
    A_s(x)
    := \sum_{k\ge 1}k^sc_kx^k ,\quad s\in\Nat_0.
\end{align}
With this definition at hand we are able to abbreviate
\begin{align*}
    A_0(x) = C(x),
    ~~ A_1(x) = xC'(x),
    ~~ A_2(x) = x^2C''(x)+xC'(x),
    ~~ A_3(x) = x^3C'''(x) + 3x^2C''(x) + xC'(x).
\end{align*}
Then we obtain 
\begin{align}
\begin{split}
    \label{eq:def_a,b,c_set_ell_hayman_proof}
    a_\ell(x)
    &= A_1(x) + \ell \frac{A_1(x)}{A_0(x)}, \quad 
    b_\ell(x)
    = A_2(x) + \ell \bigg(\frac{A_2(x)}{A_0(x)} - \frac{A_1(x)^2}{A_0(x)^2}\bigg) 
    \quad \text{and} \\
    c_\ell(x) &= A_3(x) + \ell \bigg( \frac{A_3(x)}{A_0(x)} - 3\frac{A_1(x)A_2(x)}{A_0(x)^2} + 2\frac{A_1(x)^3}{A_0(x)^3}\bigg).
    \end{split}
\end{align}
An immediate consequence of Lemma~\ref{lem:asymptotics_expansive_sum} is the following auxiliary statement.
\begin{corollary}
\label{lem:A_s_properties_hayman_proof}
Let $(c_k)_{k\in\Nat}\in\F (2\al/3+\eps,\al,\rho)$ for $0<\eps<\al/3, 0<\rho\le 1$ and $\al>0$. Set $r=\rho\eul^{-\chi}$ for $\chi>0$. Then, for any $s\in\Nat_0$ 
\begin{align}
    \label{eq:A_s_2bigOmega_hayman_proof}
    A_s(r)
    = {\cal O}\big(\chi^{-(\al+s)}\big)
    \quad\text{and}\quad 
    A_s(r) 
    =\Omega( \chi^{-(2\al/3+\eps + s)}),\quad \text{ as } \chi\to 0.
\end{align}
\end{corollary}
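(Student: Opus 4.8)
The plan is to derive both bounds directly from the defining inequalities~\eqref{eq:oscilatting_expansive} for an oscillating expansive sequence together with Karamata's Tauberian theorem in the form of Lemma~\ref{lem:asymptotics_expansive_sum}. Recall $A_s(r) = \sum_{k\ge 1} k^s c_k r^k$ and $r = \rho\eul^{-\chi}$, so that $r^k = \rho^k \eul^{-\chi k}$ and the factor $\rho^{-k}$ hidden in $c_k$ cancels against $\rho^k$. Concretely, for $k$ large the hypothesis gives $A_1 k^{\al_1-1}\rho^{-k} \le c_k \le A_2 k^{\al_2-1}\rho^{-k}$ with $\al_1 = 2\al/3+\eps$ and $\al_2 = \al$; multiplying by $k^s r^k = k^s\rho^k\eul^{-\chi k}$ and summing, the finitely many small-$k$ terms contribute only an $O(1)$ error (they are bounded uniformly in $\chi$), so
\[
    A_1 \sum_{k\ge 1} k^{\al_1+s-1}\eul^{-\chi k} + O(1)
    \le A_s(r) \le
    A_2 \sum_{k\ge 1} k^{\al_2+s-1}\eul^{-\chi k} + O(1).
\]

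Next I would apply Lemma~\ref{lem:asymptotics_expansive_sum} with the slowly varying function $h\equiv 1$ (a constant function is trivially slowly varying) and exponent parameter $\al_1+s$ for the lower sum and $\al_2+s$ for the upper sum; note these are both positive since $\al_1,\al_2,s\ge 0$ and $\al_1 > 0$. This yields
\[
    \sum_{k\ge 1} k^{\al_1+s-1}\eul^{-\chi k} \sim \Gamma(\al_1+s)\,\chi^{-(\al_1+s)}
    \quad\text{and}\quad
    \sum_{k\ge 1} k^{\al_2+s-1}\eul^{-\chi k} \sim \Gamma(\al_2+s)\,\chi^{-(\al_2+s)}
\]
as $\chi\to 0$. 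Since $\chi^{-(\al_1+s)}$ and $\chi^{-(\al_2+s)}$ both tend to infinity, the additive $O(1)$ terms are negligible, and we conclude $A_s(r) = O(\chi^{-(\al_2+s)}) = O(\chi^{-(\al+s)})$ and $A_s(r) = \Omega(\chi^{-(\al_1+s)}) = \Omega(\chi^{-(2\al/3+\eps+s)})$, which is exactly the claim.

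There is essentially no hard step here; the only points requiring a little care are (i) checking that replacing the sequence $(c_k)$ by its upper/lower envelopes only alters finitely many terms and hence introduces at most an additive constant, which is absorbed because the main term diverges, and (ii) confirming that Lemma~\ref{lem:asymptotics_expansive_sum} applies with a constant slowly varying function and with the shifted exponent $\al_j+s$ in place of $\al$. Both are routine. One could alternatively invoke a two-sided Tauberian comparison directly, but splitting into the two one-sided estimates via the envelopes of $\F(2\al/3+\eps,\al,\rho)$ and then quoting Lemma~\ref{lem:asymptotics_expansive_sum} twice is the cleanest route and keeps the argument entirely within the tools already set up in Section~\ref{sec:power_series_hayman}.
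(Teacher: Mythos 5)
Your argument is correct and is exactly what the paper has in mind when it labels the corollary an ``immediate consequence'' of Lemma~\ref{lem:asymptotics_expansive_sum}: bound $c_k$ above and below via the defining inequalities of $\F(2\al/3+\eps,\al,\rho)$ so that $\rho^{-k}$ cancels against $\rho^k$, absorb the finitely many exceptional small-$k$ terms into an $O(1)$ error, and apply Karamata's Tauberian theorem with $h\equiv 1$ and shifted exponent $\al_j+s$ to each envelope sum. No discrepancy with the paper's approach.
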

In the next statement we determine simple(-r) asymptotic expressions for the functions in~\eqref{eq:def_a,b,c_set_ell_hayman_proof} when the argument gets close to the radius of convergence.
\begin{lemma}
 \label{lem:a_l_b_l_hayman_proof}
Let $(c_k)_{k\in\Nat}\in\F (2\al/3+\eps,\al,\rho)$ for $0<\eps<\al/3, 0<\rho\le 1$ and $\al>0$. Set $r=\rho\eul^{-\chi}$ for $\chi>0$. Then the functions in~\eqref{eq:def_a,b,c_set_ell_hayman_proof} fulfill for any $\ell\in\Nat_0$
\begin{align}
    \label{eq:a_l_asympt}
    a_\ell(r) \sim A_1(r) \quad \text{and}\quad  b_\ell(r) \sim A_2(r), \quad\text{as }\chi\to 0.
\end{align}
\end{lemma}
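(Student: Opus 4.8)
The plan is to prove the two asymptotic equivalences in~\eqref{eq:a_l_asympt} by showing that the ``correction terms'' multiplied by $\ell$ in~\eqref{eq:def_a,b,c_set_ell_hayman_proof} are of strictly smaller order than the leading terms $A_1(r)$ and $A_2(r)$ as $\chi\to 0$. Since $\ell$ is a fixed constant, it suffices to bound each ratio of $A_s$'s appearing there. The only input needed is Corollary~\ref{lem:A_s_properties_hayman_proof}, which gives the two-sided bounds $A_s(r) = \mathcal{O}(\chi^{-(\al+s)})$ and $A_s(r) = \Omega(\chi^{-(2\al/3+\eps+s)})$ as $\chi\to 0$; I will also use the trivial monotonicity $A_0(r)\to\infty$ (indeed $A_0(r)=\Omega(\chi^{-(2\al/3+\eps)})\to\infty$), which is exactly the reason the expansive regime $\al>0$ is assumed.

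First I would handle $a_\ell$. By~\eqref{eq:def_a,b,c_set_ell_hayman_proof}, $a_\ell(r) = A_1(r)\bigl(1 + \ell/A_0(r)\bigr)$, and since $A_0(r)\to\infty$ as $\chi\to 0$ we immediately get $a_\ell(r)\sim A_1(r)$. Next, for $b_\ell$, I would write $b_\ell(r) = A_2(r) + \ell\bigl(A_2(r)/A_0(r) - A_1(r)^2/A_0(r)^2\bigr)$ and show both correction terms are $o(A_2(r))$. For the first, $A_2(r)/A_0(r) = A_2(r)\cdot A_0(r)^{-1} = o(A_2(r))$ again because $A_0(r)\to\infty$. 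For the second, $A_1(r)^2/A_0(r)^2$: using $A_1(r) = \mathcal{O}(\chi^{-(\al+1)})$ and $A_0(r) = \Omega(\chi^{-(2\al/3+\eps)})$ we get $A_1(r)^2/A_0(r)^2 = \mathcal{O}\bigl(\chi^{-2(\al+1)+2(2\al/3+\eps)}\bigr) = \mathcal{O}\bigl(\chi^{-(2\al/3 - 2\eps + 2)}\bigr)$, while $A_2(r) = \Omega(\chi^{-(2\al/3+\eps+2)})$; since $2\al/3 - 2\eps + 2 < 2\al/3 + \eps + 2$ (as $\eps>0$), this correction term is indeed $o(A_2(r))$. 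Combining, $b_\ell(r)\sim A_2(r)$.

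The argument is essentially a bookkeeping exercise with the exponents coming from Corollary~\ref{lem:A_s_properties_hayman_proof}, so there is no serious obstacle; the one point requiring a little care is making sure the exponent inequality in the $A_1^2/A_0^2$ bound is strict, which is where the hypothesis $\eps>0$ (equivalently, the lower exponent $2\al/3+\eps$ being genuinely above the ``critical'' value needed) enters. I would also remark that the same type of estimate shows $c_\ell(r)\sim A_3(r)$ by the identical reasoning applied to the three correction terms $A_3/A_0$, $A_1A_2/A_0^2$, $A_1^3/A_0^3$, each of which is $o(A_3(r))$ by the same exponent comparison — this is not claimed in the lemma but is the natural companion statement and may be used later, so stating it here costs nothing.
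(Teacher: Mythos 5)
Your proof is correct, and for the key bound on $A_1(r)^2/A_0(r)^2$ it takes a genuinely different route from the paper. Where you do exponent bookkeeping using the two-sided bounds from Corollary~\ref{lem:A_s_properties_hayman_proof} — namely $A_1(r) = \mathcal{O}(\chi^{-(\al+1)})$, $A_0(r) = \Omega(\chi^{-(2\al/3+\eps)})$, $A_2(r) = \Omega(\chi^{-(2\al/3+\eps+2)})$ and then a strict inequality of exponents using $\eps>0$ — the paper instead applies the Cauchy--Schwarz (Hölder) estimate $A_1(r)^2 \le A_2(r) A_0(r)$, which immediately gives $(A_1(r)/A_0(r))^2 \le A_2(r)/A_0(r) = o(A_2(r))$ using only $A_0(r)\to\infty$. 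Both arguments are valid, but the Hölder route is somewhat more robust: it dispenses entirely with the specific parameter range $(2\al/3+\eps,\al)$ at this step and needs only the divergence of $A_0$, whereas your calculation leans on the oscillating-expansive exponents being tightly spaced. The Hölder-type bounds are also reused by the paper later in the verification of $H$-admissibility (for the third-derivative terms $A_3/A_0$, $A_1A_2/A_0^2$, $A_1^3/A_0^3$), so the paper's approach sets up machinery it profits from again; your closing remark that $c_\ell(r)\sim A_3(r)$ would follow by the same exponent comparison is true, but the paper in fact establishes those correction terms via iterated Hölder estimates rather than exponent arithmetic.
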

\begin{proof}
We obtain that $A_0(r) \to\infty$ by~\eqref{eq:A_s_2bigOmega_hayman_proof}, and so  $a_\ell(r) = A_1(r) + \bigO{A_1(r)/A_0(r)} \sim A_1(r)$ as $\chi\to 0$. Further, by applying Hölder's inequality
\begin{align}
    \label{eq:hoelder_ineq_applied_hayman_proof}
    A_1(r)^2
    = \bigg(\sum_{k\ge 1}k\sqrt{c_kr^k} \cdot \sqrt{c_kr^k} \bigg)^2 
    \le A_2(r) A_0(r).
\end{align}
Hence $(A_1(r)/A_0(r))^2 \le A_2(r)/A_0(r)$, and the second statement follows by using $A_0(r)\to\infty$ once again.
\end{proof}
Now we are able to proceed with the proof of Lemma~\ref{lem:h_admissible_gen_series}.
\begin{proof}[Proof of Lemma~\ref{lem:h_admissible_gen_series} (set case).]
We verify properties~\ref{item:hayman_capture}--\ref{item:hayman_decay} for $a_\ell(x), b_\ell(x)$ and $c_\ell(x)$ from~\eqref{eq:def_a,b,c_set_ell_hayman_proof}. 
Set $r=\rho\eul^{-\chi}$. 
From~\eqref{eq:a_l_asympt} we obtain that $a_\ell(r)\sim A_1(r)$ and $b_\ell(r)\sim A_2(r)$. Then~\eqref{eq:A_s_2bigOmega_hayman_proof} implies that $a_\ell(r)$ and $b_\ell(r)$ both tend to infinity as $\chi\to 0$, thus establishing~\ref{item:hayman_capture}. 

We prove~\ref{item:hayman_locality}. For some $\al/3<\delta<\al/3+\eps/2$ set
\begin{align}
    \label{eq:theta_0_def_hayman_proof}
    \theta_0
    := \chi^{1+\delta}.
\end{align}
By applying Taylor's expansion we obtain that for $\lvert\theta\rvert \le \theta_0$ there is a $\xi=\xi(\theta)\in(-\theta,\theta)$ such that
\[
    F_\ell(r\eul^{\iu\theta})
    =F_\ell(r) \cdot \e{ \iu\theta a_\ell(r) - \frac{\theta^2}{2}b_\ell(r) +\iu^3 \frac{\theta^3}{6} c_\ell(r\eul^{\iu\xi})}.
\]
Defining 
\[
    d_\ell(x)
    := \ell\left(\frac{A_3(x)}{A_0(x)} - 3\frac{A_1(x)A_2(x)}{A_0(x)^2} + 2 \frac{A_1(x)^3}{A_0(x)^3}\right)
\]
we get in view of~\eqref{eq:def_a,b,c_set_ell_hayman_proof} that $\lvert \theta^3 c_\ell(r\eul^{\iu\xi})\rvert \le \theta_0^3\lvert A_3(r\eul^{\iu\xi})\rvert + \theta_0^3\lvert d_\ell(r\eul^{\iu\xi})\rvert$ uniformly in $\lvert\theta\rvert\le\theta_0$.
Since $A_3(r)$ has only non-negative coefficients,  the triangle inequality asserts that $\lvert A_3(r\eul^{\iu\xi})\rvert \le A_3(r)$. With~\eqref{eq:A_s_2bigOmega_hayman_proof} we further get that $A_3(r) = \bigO{ \chi^{-(\al+3)} }$ as $\chi\to0$.
We conclude that $\theta_0^3\lvert A_3(r\eul^{\iu\xi})\rvert \le \theta_0^3 A_3(r) = \bigO{\chi^{-\al+3\delta}} = o(1)$ as $\chi\to 0$ uniformly in $\lvert\theta\rvert\le \theta_0$. It remains to show that $\theta_0^3\lvert d_\ell(r\eul^{\iu\xi})\rvert = o(1)$ in order to get~\ref{item:hayman_locality}.
Since the functions appearing in $d_\ell$ cannot necessarily be represented as power series with non-negative coefficients (powers of $A_0$ appear in the denominator), we cannot use the triangle inequality to get rid of $\eul^{\iu\xi}$ that easily. So, we first determine a lower bound for $\lvert A_0(r\eul^{\iu\xi})\rvert$ that holds uniformly in $\lvert\theta\rvert \le \theta_0$; in fact we are going to show that $\lvert A_0(r\eul^{\iu\xi})\rvert \sim A_0(r)$. From the definition of the absolute value of complex numbers 
\begin{align} 
    \label{eq:A_0_complex_absolute_value_hayman_proof}
    \lvert A_0(r\eul^{\iu\xi}) \rvert^2 
    = \bigg(\sum_{k\ge 1}c_kr^k\cos(\xi k)\bigg)^2 + \bigg(\sum_{k\ge 1}c_kr^k\sin(\xi k)\bigg)^2
    \ge \bigg( \sum_{k\ge 1}c_kr^k\cos(\xi k) \bigg)^2.
\end{align}
For $1\le k\le \chi^{-1-\delta/2}$ we have $k\xi=o(1)$ since $\lvert \xi\rvert\le\lvert\theta\rvert \le \theta_0=\chi^{1+\delta}$. Hence
\[
    \sum_{k\ge 1}c_kr^k\cos(\xi k)
    \sim A_0(r) - R_0 + R_1, \quad R_i = \sum_{k>\chi^{-1-\delta/2}} c_kr^k \cos(\xi k)^i, \,i=0,1.
\]
Then for $i=0,1$ and recalling that $(c_k)_{k\in\Nat}\in \F(2\al/3+\eps,\al,\rho)$
\[
    \lvert R_i\rvert 
    = \mathcal{O}\bigg( \sum_{k> \chi^{-1-\delta/2}} k^{\al-1} \eul^{-\chi k}\bigg)
    = \Theta\left( \chi^{-\al} \int_{\chi^{-\delta/2}}^\infty t^{\al-1}\eul^{-t}dt\right)
    =o(1).
\]
Together with $\lvert A_0(r\eul^{\iu\xi})\rvert \le A_0(r)$ we thus deduce $\lvert A_0(r\eul^{\iu\xi})\rvert \sim A_0(r)$ as $\chi\to 0$ uniformly in $\lvert \xi\rvert \le\lvert\theta\rvert\le\theta_0$. Using that $A_s$ has only non-negative coefficients for $s\ge 0$ we then get the estimate
\[
    \ell^{-1}\lvert d_\ell(r\eul^{\iu\xi}) \rvert 
    \le \frac{A_3(r)}{\lvert A_0(r\eul^{\iu\xi})\rvert} + 3\frac{A_1(r)A_2(r)}{\lvert A_0(r\eul^{\iu\xi})\rvert^2} + 2\frac{A_1(r)^3}{\lvert A_0(r\eul^{\iu\xi})\rvert^3}
    \sim \frac{A_3(r)}{A_0(r)} + 3\frac{A_1(r)A_2(r)}{A_0(r)^2} + 2\frac{A_1(r)^3}{A_0(r)^3}.
\]
With H{\"o}lder's inequality we obtain
\begin{align*}
     A_2(r)^2 
     &= \bigg( \sum_{k\ge 1}k^{3/2} \sqrt{c_k r^k} \cdot \sqrt{kc_kr^k} \bigg)^2
    \le A_3(r)A_1(r) \quad\text{and}  \\
    A_1(r)^3 
    &= \bigg(\sum_{k\ge 1} k (c_kr^k)^{1/3} \cdot (c_kr^k)^{2/3} \bigg)^3
    \le A_3(r) A_0(r)^2.
\end{align*}
In~\eqref{eq:hoelder_ineq_applied_hayman_proof} we showed that $A_1(r)^2 \le A_2(r)A_0(r)$. With this at hand, we get that $A_1(r)A_2(r)/A_0(r)^2 \le A_2(r)^2/(A_0(r)A_1(r)) \le A_3(r)/A_0(r)$ and $A_1(r)^3/A_0(r)^3 \le A_3(r)/A_0(r)$. Since $A_0(r)\to\infty$ according to~\eqref{eq:A_s_2bigOmega_hayman_proof} this implies with~\eqref{eq:A_s_2bigOmega_hayman_proof} that, as $\chi\to 0$ and uniformly in $\lvert \theta\rvert \le \theta_0$,
\[
    \theta_0^3\lvert d_\ell(r\eul^{\iu\xi})\rvert 
    = \bigO{ \theta_0^3\frac{A_3(r)}{A_0(r)} }
    = o(\theta_0^3A_3(r))
    =o(1).
\]
This delivers~\ref{item:hayman_locality}. The hardest part of the proof is to show~\ref{item:hayman_decay}, but, as already mentioned in the introduction,~\cite[Lem.~7]{Freiman2002} solves the problem in an almost identical setting; we repeat (and sometimes adapt) the arguments for completeness. Since $C(x)$ has only non-negative coefficients 
\begin{align}
    \label{eq:hayman_decay_proof_1}
    \bigg\lvert\frac{F_\ell(r\eul^{\iu\theta})}{F_\ell(r)}\bigg\rvert
    \le \big\lvert e^{C(r\eul^{\iu\theta})-C(r)} \big\rvert 
    \le\exp\Big\{ \sum_{k\ge 1}c_kr^k (\cos(\theta k)-1) \Big\}
    = \exp\Big\{-2 \sum_{k\ge 1} c_k r^k \sin^2(\theta k/2)\Big\}.
\end{align}
Since $\sin^2(x)$ is symmetric, it is no restriction to consider $\theta>0$ from now on. Denote by $\lVert x\rVert$ the distance from $x\in\Real$ to its nearest integer. We will use the well-known fact
\[
    \sin^2(\pi x)
    \ge 4\lVert x\rVert.
\]
Setting $\al_1 := 2\al/3 + \eps$
and letting $A_1>0$ be such that $c_k\ge A_2k^{\al_1-1}\rho^{-k}$ for $k\in\Nat$, we get that
\[
    (4A_1)^{-1} \sum_{k\ge 1} c_k r^k \sin^2(\theta k/2)
    \ge \sum_{k\ge 1} k^{\al_1-1}\eul^{-\chi k} \lVert \theta k/(2\pi)\rVert 
    =: V(\theta/(2\pi)).    
\]
We claim that there exists $f>0$ such that, as $\chi\to 0$ and uniformly in $\theta_0/(2\pi) \le \theta < 1/2$,
\begin{align}
    \label{eq:hayman_decay_proof_2}
    V(\theta)
    \ge \chi^{-f}.
\end{align}
Since $b_\ell(r) \sim A_2(r)=\bigO{ \chi^{-(\al+2)}}$ due to~\eqref{eq:A_s_2bigOmega_hayman_proof} we obtain that $b(r)^{-1/2} = \Omega( \chi^{-(\al/2+1)})$ implying with~\eqref{eq:hayman_decay_proof_1} that as $\chi\to0$ and uniformly in $\theta_0\le \theta < \pi$
\[
     \bigg\lvert\frac{F_\ell(r\eul^{\iu\theta})}{F_\ell(r)}\bigg\rvert
     \le \e{-8A_1 V(\theta)}
     \le \e{-8A_1\chi^{-f}}
     =o(b(r)^{-1/2}),
\]
which is Condition~\ref{item:hayman_decay}. So, if we show~\eqref{eq:hayman_decay_proof_2} we are done.

From here we basically copy the proof of~\cite[Lem. 7]{Freiman2002} with minor adaptions. 
We partition $[\theta_0/2\pi,1/2)$ into $I_1:=[\theta_0/(2\pi),\chi]$ and $I_2:=(\chi,1/2)$. Consider $\theta\in I_1$. For such $\theta$ we have that
\[
    \lVert \theta k\rVert = \theta k,
    \quad k\le (2\chi)^{-1},
\]
implying
\begin{align}
    \label{eq:hayman_decay_proof_3}
    V(\theta)
    \ge \theta_0^2/(2\pi)^2 \sum_{1\le k\le (2\chi)^{-1}}
    k^{\al_1+1}\eul^{-\chi k}
    = \Theta(\chi^{2\delta-\al_1}).
\end{align}
Since $\delta$ is chosen such that 2$\delta-\al_1 =2 \delta - 2\al/3 - \eps <-\al/3-\eps/2<0$ the claim~\eqref{eq:hayman_decay_proof_2} follows as $\chi\to 0$ and uniformly in $\theta\in I_1$.  Let us now consider $\theta\in I_2$. Define the sets
\[
    Q(\theta)
    := \{ k\ge 1: \lVert \theta k\rVert \ge 1/4\}
    = \bigcup_{j\ge 0} Q_j(\theta),
    \quad Q_j(\theta) := \{k\ge 1: j+1/4 \le \theta k \le j+3/4\}.
\]
Then
\[
    16\cdot V(\theta)
    \ge \sum_{k\in Q(\theta)} k^{\al_1-1}\eul^{-\chi k}
    = \sum_{j\ge 0} \sum_{k\in Q_j(\theta)} k^{\al_1-1}\eul^{-\chi k}.
\]
The intuition behind the choice of these sets is that for any $\theta\in I_2$ and $j\ge 0$ there is a least one element of order $j/\theta$ in $Q_j(\theta)$ because $(j+3/4)/\theta - (j+1/4)/\theta =(2\theta)^{-1}\ge 1$. In particular for $j$ close to $\chi^{-1}\theta$ we sum over at least one $k$ with magnitude $\chi^{-1}$, the range where $k^{\al_1-1}\eul^{-\chi k}$ contributes the most to the entire sum so that the asymptotic order of $\sum_{k\ge 1}k^{\al_1-1}\eul^{-\chi k} = \Theta(\chi^{-\al_1})$ can be recovered even in the limited range $k\in Q(\theta)$. At the same time the term $\lVert\theta k\rVert$ is bounded from below uniformly in $k\in Q(\theta)$ and $\theta\in I_2$.

To substantiate this, as a next step we estimate the sum by an integral. Since for $j\ge 0$ we have that $(j+1/4)/\theta \le k \le (j+3/4)/\theta$ we deduce for $\chi j/\theta$ sufficiently large, say $\ge s_0>0$, that $(\chi k)^{\al-1}\eul^{-\chi k}$ is monotonic decreasing for $k\in \bigcup_{j\ge s_0 \theta\chi^{-1}} Q_j(\theta)$. Hence we find the following lower bound for the sum which holds as $\chi\to0$ uniformly in $\theta\in I_2$
\begin{align*}
    16\cdot V(\theta)
    &\ge \int_{s_0 \theta\chi^{-1}}^\infty \int_{(u+1/4)/\theta}^{(u+3/4)/\theta} v^{\al_1 - 1} \eul^{-\chi v} dv du 
    = \chi^{-\al_1}  \int_{s_0 \theta\chi^{-1}}^\infty  \int_{\chi(u+1/4)/\theta}^{\chi(u+3/4)/\theta} t^{\al_1 - 1} \eul^{- t} dt dv \\
    &= \chi^{-\al_1}  \frac{\theta}{\chi}  \int_{s_0}^\infty
    \int_{s + \chi/(4\theta)}^{s + 3\chi/(4\theta) } t^{\al_1 - 1} \eul^{- t} dt ds,
\end{align*}
where we first applied the change of variables $t=\chi v$ and then $s = \chi/\theta\cdot u$. Estimating $3\chi/(4\theta) \le 1$ and using again that the involved functions are decreasing in the range we are integrating over we get as $\chi\to 0$ uniformly in $\theta\in I_2$
\begin{align*}
    16\cdot V(\theta)
    \ge \chi^{-\al_1}  \frac{\theta}{\chi}  \int_{s_0}^\infty 
    \int_{s + \chi/(4\theta)}^{s + 3\chi/(4\theta) } t^{\al_1 - 1} \eul^{- t} dt ds 
    \ge \chi^{-\al_1}/2 \int_{t_0}^\infty (s+1)^{\al-1}\eul^{-(s+1)} ds 
    =\Theta(\chi^{-\al_1}).
\end{align*}
This shows together with~\eqref{eq:hayman_decay_proof_3} that~\eqref{eq:hayman_decay_proof_2} is valid as $\chi\to 0$ and uniformly in $\theta\in I_1\cup I_2$. Altogether, we have just established that $F_\ell$ has~\ref{item:hayman_decay} and the proof is completed.
\end{proof}

\subsubsection{Proof of Lemma~\ref{lem:h_admissible_gen_series} in the Multiset Case}  
In this section we prove that $\tilde{F}_\ell(x):=G(x)\prod_{1\le i\le \ell}\sum_{j\ge1}j^{p_i}C(x^j)$ is $H$-admissible under the conditions stated in Lemma~\ref{lem:h_admissible_gen_series}, that is, we verify properties~\ref{item:hayman_capture}--\ref{item:hayman_decay} for $\tilde{F}_\ell$ and the corresponding
$$
    \tilde{f}_\ell(x) := \ln \tilde{F}_\ell(x) = \sum_{j\ge 1}C(x^j)/j + \sum_{1\le i\le \ell} \ln \left(\sum_{j\ge1} j^{p_i}C(x^j)\right).
$$
Before we do so, as a preparation we establish some useful properties of the functions defined in~\eqref{eq:a,b,c_hayman_def}
\begin{align}
\label{eq:def_tilde_a_ell}
    \tilde{a}_\ell(x)
    := x\tilde{f}_\ell'(x), 
    \quad \tilde{b}_\ell(x)
    := x^2\tilde{f}_\ell''(x)+x\tilde{f}_\ell'(x)
    \quad\text{and}\quad 
    \tilde{c}_\ell(x)
    := x^3\tilde{f}_\ell '''(x) + 3x^2\tilde{f}_\ell''(x) + x\tilde{f}_\ell'(x).
\end{align}
To simplify the notation we introduce
\begin{align}
    \label{eq:A_s_t_def_hayman_proof}
    A_{s,t}(x)
    := \sum_{j\ge 1}j^{t-1} \sum_{k\ge 1}k^sc_kx^{jk}, 
    \qquad s,t\in\Nat_0.
\end{align}
With this notation
\begin{align*}
    A_{0,t}(x) &= \sum_{j\ge 1}j^{t-1}C(x^j), \quad A_{1,t}(x) = \sum_{j\ge 1}j^{t-1}x^jC'(x^j), \\
    A_{2,t}(x) &= \sum_{j\ge 1} j^{t-1}(x^{2j}C''(x^j)+x^jC'(x^j)) \quad\text{and} \\
    A_{3,t}(x) &= \sum_{j\ge 1}j^{t-1}(x^{3j}C'''(x^j) + 3x^{2j}C''(x^j) + x^jC'(x^j)).
\end{align*}
With this at hand, we can write
\begin{align}
    \begin{split}
    \label{eq:tilde_a_b_ell_hayman_proof}
    \tilde{a}_\ell(x) 
    &= A_{1,1}(x) + \sum_{1\le i\le \ell} \frac{A_{1,2+p_i}(x)}{A_{0,1+p_i}(x)}, \quad 
    \tilde{b}_\ell(x)
    = A_{2,2}(x) + \sum_{1\le i\le \ell} \bigg (\frac{A_{2,3+p_i}(x)}{A_{0,1+p_i}(x)} - \frac{A_{1,2+p_i}(x)^2}{A_{0,1+p_i}(x)^2} \bigg) \quad\text{and} \\
    \tilde{c}_\ell(x) 
    &= A_{3,3}(x) + \sum_{1\le i\le \ell} \bigg( \frac{A_{3,4+p_i}(x)}{A_{0,1+p_i}(x)}-3\frac{A_{1,2+p_i}(x)A_{2,3+p_i}(x)}{A_{0,1+p_i}(x)^2}+ 2\frac{A_{1,2+p_i}(x)^3}{A_{0,1+p_i}(x)^3}\bigg).
    \end{split}
\end{align}
We state some asymptotic properties of~\eqref{eq:A_s_t_def_hayman_proof} based on Lemma~\ref{lem:sum_k^a/(1-x)^b_different_regimes_proof_hayman}. For this also recall the definition of $A_s(x)$ from~\eqref{eq:A_s_def_hayman_proof}.
\begin{lemma}
\label{lem:A_s_t_properties_hayman_proof}
Let $(c_k)_{k\in\Nat}\in\F (2\al/3+\eps,\al,\rho)$ for $0<\eps<\al/3, 0<\rho\le 1$ and $\al>0$. Set $r=\rho\eul^{-\chi}$ for $\chi>0$. Then the following statements are true for any $s,t\in\Nat_0$ and as $\chi\to 0$. First,
\begin{align}
    \label{eq:A_s_s+1_o(1)_hayman_proof}
    \frac{A_{s,s+t}(r)}{A_{s,s}(r)A_{0,t}(r)}
    =o(1).
\end{align}
Moreover, if $0 < \rho < 1$, then
\begin{align}
    \label{eq:A_s_t_0<rho<1_proof_hayman}
    A_{s,t}(r)
    = A_s(r) + \sum_{j\ge 2}j^{t-1} \sum_{k\ge 1}k^sc_k\rho^{js} + o(1)
    = A_s(r) + \bigO{1}.
\end{align}
Finally, if $\rho = 1$, then
\begin{align}
    \label{eq:A_s_t_bigO_proof_hayman}
    A_{s,t}(r)
    = \begin{dcases*}
        {\cal O}(\chi^{-(\al+s)}) , & $t<\al+s$\\
        {\cal O}(\chi^{-(\al+s)} \ln( \chi^{-1})), & $t = \al+s$ \\
        {\cal O}(\chi^{-t}), & $t > \al+s$
    \end{dcases*},
    \quad
    A_{s,t}(r)
        = \begin{dcases*}
        \Omega(\chi^{-(2\al/3+\eps+s)}) , & $t<2\al/3+\eps+s$\\
        \Omega(\chi^{-(2\al/3+\eps+s)} \ln( \chi^{-1}) ), & $t = 2\al/3+\eps+s$ \\
        \Omega(\chi^{-t}), & $t > 2\al/3+\eps+s$
    \end{dcases*}.
\end{align}
\end{lemma}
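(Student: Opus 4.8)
The plan is to prove~\eqref{eq:A_s_t_bigO_proof_hayman} (the case $\rho=1$) first, then~\eqref{eq:A_s_t_0<rho<1_proof_hayman} (the case $0<\rho<1$), and finally to read~\eqref{eq:A_s_s+1_o(1)_hayman_proof} off from the size estimates obtained on the way. Throughout write $r=\rho\eul^{-\chi}$, and use the two groupings
\[
    A_{s,t}(r)=\sum_{j\ge1}j^{t-1}A_s(r^j)=\sum_{k\ge1}k^sc_k\sum_{j\ge1}j^{t-1}r^{jk},
\]
together with the elementary identity $\sum_{j\ge1}j^{t-1}x^{j}=x\,Q_t(x)(1-x)^{-t}$ for integer $t\ge1$, where $Q_t$ is a polynomial with non-negative coefficients satisfying $Q_t(0)=1$ and $Q_t(1)=(t-1)!$ (so $x(1-x)^{-t}\le\sum_{j\ge1}j^{t-1}x^{j}\le(t-1)!\,x(1-x)^{-t}$ on $[0,1)$; the degenerate case $t=0$, which is not needed downstream, will be treated separately). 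I shall also use the sandwich $A_1k^{2\al/3+\eps-1}\rho^{-k}\le c_k\le A_2k^{\al-1}\rho^{-k}$ valid for $k\ge k_0$; the finitely many smaller $k$ add at most $\bigO{1}$ to each sum, which is harmless since all sums in question diverge as $\chi\to0$.

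\emph{Case $\rho=1$.} Here $r^j=\eul^{-\chi j}$; inserting the rational-function identity with $x=r^k$ and bounding $Q_t(r^k)\le(t-1)!$ shows that $A_{s,t}(r)$ equals, up to constant factors, a sum of the form $\sum_{k\ge1}k^{\gamma}\eul^{-\chi k}(1-\eul^{-\chi k})^{-t}$ --- exactly the object handled by Lemma~\ref{lem:sum_k^a/(1-x)^b_different_regimes_proof_hayman}. For the upper bound in~\eqref{eq:A_s_t_bigO_proof_hayman} one uses $c_k\le A_2k^{\al-1}$ and applies Lemma~\ref{lem:sum_k^a/(1-x)^b_different_regimes_proof_hayman} with $\gamma=s+\al-1$ and $\beta=t$, so that its three cases $\beta<1+\gamma$, $\beta=1+\gamma$, $\beta>1+\gamma$ become $t<s+\al$, $t=s+\al$, $t>s+\al$ and reproduce precisely the three branches of the $\bigO{\cdot}$-bound; the lower bound is symmetric, using $c_k\ge A_1k^{2\al/3+\eps-1}$ and $\sum_{j\ge1}j^{t-1}x^j\ge x(1-x)^{-t}$ together with Lemma~\ref{lem:sum_k^a/(1-x)^b_different_regimes_proof_hayman} for $\gamma=s+2\al/3+\eps-1$, $\beta=t$. (For $t=0$ one instead estimates $A_s(r^j)=\bigO{(\chi j)^{-(s+\al)}}$ and sums the convergent series $\sum_{j\ge1}j^{-1-s-\al}$ for the upper bound, and uses $A_{s,0}(r)\ge A_s(r)=\Omega(\chi^{-(s+2\al/3+\eps)})$, via Lemma~\ref{lem:asymptotics_expansive_sum}, for the lower bound.)

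\emph{Case $0<\rho<1$.} Using the first grouping, split off the term $j=1$, which equals $A_s(r)$, from the tail $\sum_{j\ge2}j^{t-1}A_s(r^j)$. For $j\ge2$ one has $r^j\le\rho^j\le\rho^2<\rho$, so $r^j$ lies strictly inside the disc of convergence, $A_s(r^j)\le A_s(\rho^j)$, and $c_k\rho^{jk}=\bigO{k^{\al-1}\rho^{(j-1)k}}$ yields $A_s(\rho^j)=\bigO{\rho^{j-1}}$; hence $j^{t-1}A_s(\rho^j)$ is a summable majorant, and since $A_s(r^j)\to A_s(\rho^j)$ pointwise as $\chi\to0$, dominated convergence gives $\sum_{j\ge2}j^{t-1}A_s(r^j)=\sum_{j\ge2}j^{t-1}\sum_{k\ge1}k^sc_k\rho^{jk}+\smallO{1}$, which is~\eqref{eq:A_s_t_0<rho<1_proof_hayman}. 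It is convenient to record here that, sandwiching $c_k$ and invoking Lemma~\ref{lem:asymptotics_expansive_sum}, one has $A_s(r)=\Omega(\chi^{-(s+2\al/3+\eps)})$ and $A_s(r)=\bigO{\chi^{-(s+\al)}}$, and likewise $C(r)=\Omega(\chi^{-(2\al/3+\eps)})$.

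\emph{Proof of~\eqref{eq:A_s_s+1_o(1)_hayman_proof} and the main difficulty.} It remains to insert the estimates above. If $0<\rho<1$ then $A_{s,s+t}(r)=A_s(r)+\bigO{1}=\bigO{\chi^{-(s+\al)}}$, while $A_{s,s}(r)=\Omega(\chi^{-(s+2\al/3+\eps)})$ and $A_{0,t}(r)=C(r)+\bigO{1}=\Omega(\chi^{-(2\al/3+\eps)})$, so the ratio is $\bigO{\chi^{\,2(2\al/3+\eps)-\al}}=\bigO{\chi^{\,\al/3+2\eps}}=\smallO{1}$. If $\rho=1$, then by~\eqref{eq:A_s_t_bigO_proof_hayman} the numerator $A_{s,s+t}(r)$ is $\bigO{\chi^{-(s+\al)}}$ for $t\le\al$ and $\bigO{\chi^{-(s+t)}}$ for $t\ge\al$ (up to a factor $\ln\chi^{-1}$ at $t=\al$), $A_{s,s}(r)=\Omega(\chi^{-(s+2\al/3+\eps)})$, and $A_{0,t}(r)$ is $\Omega(\chi^{-(2\al/3+\eps)})$ for $t\le 2\al/3+\eps$ and $\Omega(\chi^{-t})$ for $t\ge 2\al/3+\eps$; running through the regimes $t<2\al/3+\eps$, $2\al/3+\eps\le t<\al$, and $t\ge\al$ (note that $\eps<\al/3$ forces $t\ge\al\Rightarrow t>2\al/3+\eps$), one checks in each case that the exponent of $\chi$ in the denominator exceeds the one in the numerator by at least $\al/3+2\eps>0$, a polynomial gain that in particular swallows the logarithms at the boundary values $t\in\{2\al/3+\eps,\al\}$, giving $\smallO{1}$. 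The bulk of the argument is bookkeeping; the only genuinely analytic input is Lemma~\ref{lem:sum_k^a/(1-x)^b_different_regimes_proof_hayman}, and the two steps that demand care are the dominated-convergence argument for $0<\rho<1$ and this final case distinction on the position of $t$.
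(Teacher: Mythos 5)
Your argument is correct and follows essentially the same route as the paper: the split of the $j=1$ term plus dominated convergence for $0<\rho<1$, the conversion of $A_{s,t}$ (for $\rho=1$) into the objects handled by Lemma~\ref{lem:sum_k^a/(1-x)^b_different_regimes_proof_hayman} (applied with $\gamma=s+\al-1$ for the $\bigO{\cdot}$-bounds and $\gamma=s+2\al/3+\eps-1$ for the $\Omega(\cdot)$-bounds), and the same case distinction on the position of $t$ relative to $2\al/3+\eps$ and $\al$ for~\eqref{eq:A_s_s+1_o(1)_hayman_proof}. Two minor differences are worth flagging. (i) The paper writes the $\rho=1$ reduction as a bare term-by-term ``$\Theta$'' that silently degenerates at $t=0$, since $\sum_{j\ge1}j^{-1}x^j=-\ln(1-x)$ is not comparable to $x/(1-x)^0=x$ near $x=1$; your Eulerian-polynomial sandwich $x(1-x)^{-t}\le\sum_{j\ge1}j^{t-1}x^j\le(t-1)!\,x(1-x)^{-t}$ holds only for $t\ge1$, and your separate direct estimate of $\sum_{j\ge1}j^{-1}A_s(r^j)$ for $t=0$ is the right way to patch this. (ii) For the $0<\rho<1$ branch of~\eqref{eq:A_s_s+1_o(1)_hayman_proof} the paper is slightly slicker: since~\eqref{eq:A_s_t_0<rho<1_proof_hayman} gives $A_{s,s+t}(r)\sim A_{s,s}(r)\sim A_s(r)$, the ratio is simply $\sim A_{0,t}(r)^{-1}\to 0$, without the separate polynomial-rate bookkeeping you carry out; your version is also correct and in fact reproduces the $\chi^{\al/3+2\eps}$ margin that you use for $\rho=1$. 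Your observation that this polynomial margin swallows the logarithms at the boundary exponents is exactly the point that makes the final case analysis close.
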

\begin{proof}
To show~\eqref{eq:A_s_t_0<rho<1_proof_hayman} we first note that $A_{s,t}(r) = A_s(r) + \sum_{j\ge 2}j^{t-1}\sum_{k\ge 1}k^sc_kr^{jk}$. Note that for $0<\rho<1$ we have some $\eps>0$ such that for all $j\ge 2$ it holds that $r^{jk} \le (1-\eps)^k \rho^k$. Hence applying Lemma~\ref{lem:C(x)_approx} shows that for $0<\rho<1$
\[
    \sum_{j\ge 2}j^{t-1}\sum_{k\ge 1}k^sc_kr^{jk}
    = \bigO{\sum_{j\ge 2}j^{t-1}\rho^{j}}
    =\bigO{1}.
\] 
By dominated convergence we can let $r\to \rho$. 
In addition, if $0<\rho<1$, then due to~\eqref{eq:A_s_t_0<rho<1_proof_hayman} we obtain $A_{s,s+t}(r)/(A_{s,s}(r)A_{0,t}(r)) \sim A_{0,t}(r)^{-1}=o(1)$. This shows all the statements for $0<\rho<1$.

For the remaining proof assume $\rho=1$. Then for $s,t\in\Nat_0$ as $\chi\to 0$
\begin{align*}
    A_{s,t}(r)
    = \sum_{k\ge 1}k^s c_k \sum_{j\ge 1}j^{t-1} \eul^{-\chi k j}
    = \Theta \bigg( \sum_{k\ge 1} \frac{k^s c_k \eul^{-\chi k}}{(1-\eul^{-\chi k})^{t}} \bigg).
\end{align*}
With this at hand, Lemma~\ref{lem:sum_k^a/(1-x)^b_different_regimes_proof_hayman} reveals~\eqref{eq:A_s_t_bigO_proof_hayman}
.
In turn, with~\eqref{eq:A_s_t_bigO_proof_hayman}
we compute for $s,t\in\Nat$
\begin{align*}
    \frac{A_{s,s+t}(r)}{A_{s,s}(r)A_{0,t}(r)}
    = \begin{dcases*}
         \bigO{\chi^{2\al/3 + \eps }\ln(\chi^{-1})}   ,& $2\al/3+\eps<\al\le t$ \\
         \bigO{\chi^{-\al/3 +\eps +t}}   ,& $ 2\al/3+\eps \le t < \al$ \\
         \bigO{\chi^{\al/3+2\eps}}   ,& $ t<2\al/3+\eps  <\al$ 
    \end{dcases*}.
\end{align*}
The only term which is not readily in $o(1)$ is $\chi^{-\al/3+\eps+t}$. But $\al<3t/2$ in order for $2\al/3+\eps<t$ to hold so that $-\al/3+\eps + t > \eps+t/2>0$ and it follows that $\chi^{-\al/3+\eps+t}=o(1)$. This delivers~\eqref{eq:A_s_s+1_o(1)_hayman_proof} and we are done.
\end{proof}
In complete analogy to Lemma~\ref{lem:a_l_b_l_hayman_proof} we show that the functions in~\eqref{eq:tilde_a_b_ell_hayman_proof} are asymptotically equal to versions of~\eqref{eq:A_s_t_def_hayman_proof} when their argument gets close the radius of convergence $\rho$.
\begin{lemma}
\label{lem:tilde_a_l_b_l_hayman_proof}
Let $(c_k)_{k\in\Nat}\in\F (2\al/3+\eps,\al,\rho)$ for $0<\eps<\al/3, 0<\rho\le 1$ and $\al>0$. Set $r=\rho\eul^{-\chi}$ for $\chi>0$. Then the functions~\eqref{eq:def_a,b,c_set_ell_hayman_proof} fulfill for all $\ell\in\Nat_0,(p_1,\dots,p_\ell)\in\Nat_0^\ell$ that
\begin{align}
    \label{eq:tilde_a_l_asympt}
    \tilde{a}_\ell(r)
    \sim A_{1,1}(r) \quad\text{and}\quad \tilde{b}_\ell(r) \sim A_{2,2}(r), \quad \text{as }\chi\to 0.
\end{align}
\end{lemma}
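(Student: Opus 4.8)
The plan is to transcribe the proof of Lemma~\ref{lem:a_l_b_l_hayman_proof} to the double-indexed setting, with estimate~\eqref{eq:A_s_s+1_o(1)_hayman_proof} of Lemma~\ref{lem:A_s_t_properties_hayman_proof} taking over the role that Hölder's inequality played in the set case. By the decomposition~\eqref{eq:tilde_a_b_ell_hayman_proof} of $\tilde a_\ell$ and $\tilde b_\ell$, and since each sum there runs over the finitely many indices $1\le i\le\ell$, it suffices to show that each correction term $A_{1,2+p_i}(r)/A_{0,1+p_i}(r)$ is $o(A_{1,1}(r))$ and each correction term $A_{2,3+p_i}(r)/A_{0,1+p_i}(r)-A_{1,2+p_i}(r)^2/A_{0,1+p_i}(r)^2$ is $o(A_{2,2}(r))$ as $\chi\to 0$; then $\tilde a_\ell(r)=A_{1,1}(r)(1+o(1))\sim A_{1,1}(r)$ and $\tilde b_\ell(r)=A_{2,2}(r)(1+o(1))\sim A_{2,2}(r)$ follow at once.

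The first of these is immediate from~\eqref{eq:A_s_s+1_o(1)_hayman_proof} with $s=1$ and $t=1+p_i$, which states precisely that $A_{1,2+p_i}(r)=o\big(A_{1,1}(r)A_{0,1+p_i}(r)\big)$. For the second correction term the only point is to control the subtracted square. Writing $w_{j,k}:=j^{p_i}c_kr^{jk}\ge 0$ and noting that
\[
    \frac{A_{1,2+p_i}(r)}{A_{0,1+p_i}(r)}=\frac{\sum_{j,k}w_{j,k}\,(jk)}{\sum_{j,k}w_{j,k}},
    \qquad
    \frac{A_{2,3+p_i}(r)}{A_{0,1+p_i}(r)}=\frac{\sum_{j,k}w_{j,k}\,(jk)^2}{\sum_{j,k}w_{j,k}},
\]
the correction term is exactly the variance of $jk$ under the probability weights proportional to $w_{j,k}$; in particular it is nonnegative and bounded above by the corresponding ``second moment'' $A_{2,3+p_i}(r)/A_{0,1+p_i}(r)$. (The same upper bound also follows directly from the Cauchy--Schwarz inequality applied to the factorization $j^{p_i+1}k\,c_kr^{jk}=\big(j^{p_i}c_kr^{jk}\big)^{1/2}\big(j^{p_i+2}k^2c_kr^{jk}\big)^{1/2}$, which gives $A_{1,2+p_i}(r)^2\le A_{0,1+p_i}(r)\,A_{2,3+p_i}(r)$.) Now~\eqref{eq:A_s_s+1_o(1)_hayman_proof} with $s=2$ and $t=1+p_i$ yields $A_{2,3+p_i}(r)=o\big(A_{2,2}(r)A_{0,1+p_i}(r)\big)$, so this correction term is also $o(A_{2,2}(r))$, and the proof is complete.

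I do not anticipate a genuine obstacle: the argument is a faithful copy of Lemma~\ref{lem:a_l_b_l_hayman_proof}, and the only part that is not purely mechanical is the handling of the subtracted square in $\tilde b_\ell$, which the variance interpretation (equivalently, Cauchy--Schwarz) above disposes of in one line. The remaining work is just matching the indices $s$ and $t$ in~\eqref{eq:A_s_s+1_o(1)_hayman_proof} and summing the $\ell$ vanishing terms.
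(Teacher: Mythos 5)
Your proposal is correct and follows essentially the same route as the paper: decompose via~\eqref{eq:tilde_a_b_ell_hayman_proof}, dispatch the $\tilde a_\ell$ correction terms directly with~\eqref{eq:A_s_s+1_o(1)_hayman_proof}, and bound the subtracted square in the $\tilde b_\ell$ correction by the second-moment term before applying~\eqref{eq:A_s_s+1_o(1)_hayman_proof} again. The paper states the key inequality $A_{1,2+p}(r)^2\le A_{2,3+p}(r)A_{0,1+p}(r)$ via Hölder; your variance interpretation is an equivalent (and slightly cleaner) way to say the same thing.
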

\begin{proof}[Proof of Lemma~\ref{lem:tilde_a_l_b_l_hayman_proof}]
For any $p\in \Nat_0$ we have that $A_{1,2+p}(r)/(A_{1,1}(r) A_{0,1+p}(r)) = o(1)$ due to~\eqref{eq:A_s_s+1_o(1)_hayman_proof} giving us $\tilde{a}_\ell(r) \sim A_{1,1}(r)$. And with H{\"o}lder's inequality we obtain for any $p\in\Nat_0$
\begin{align}
    \label{eq:Hoelder_A_1_2_hayman_proof}
    A_{1,2+p}(r)^2
    = \bigg(\sum_{j\ge 1}\sum_{k\ge 1} j^{(p+2)/2} k\sqrt{c_kr^{jk}} \cdot j^{p/2} \sqrt{c_kr^{jk}}\bigg)^2
    \le A_{2,3+p}(r)A_{0,1+p}(r).
\end{align}
Hence~\eqref{eq:A_s_s+1_o(1)_hayman_proof} delivers that $A_{2,3+p}(r)/(A_{2,2}(r)A_{0,1+p}(r)) = o(1)$ for any $p\in\Nat_0$ giving $\tilde{b}_\ell (r) \sim A_{2,2}(r)$ as $\chi\to 0$.
\end{proof}
With these statements at hand, we are able to deliver the proof of Lemma~\ref{lem:h_admissible_gen_series}.
\begin{proof}[Proof of Lemma~\ref{lem:h_admissible_gen_series} (multiset case).]
We need to verify~\ref{item:hayman_capture}--\ref{item:hayman_decay} for~\eqref{eq:tilde_a_b_ell_hayman_proof}. Set $r=\rho\eul^{-\chi}$ for $\chi>0$. Then~\eqref{eq:tilde_a_l_asympt} shows that $a_\ell(r)\sim A_{1,1}(r)$ and $\tilde{b}_\ell(r)\sim A_{2,2}(r)$. Equations~\eqref{eq:A_s_2bigOmega_hayman_proof},\eqref{eq:A_s_t_0<rho<1_proof_hayman} for $0<\rho<1$ and~\eqref{eq:A_s_t_bigO_proof_hayman} for $\rho=1$ show that $\tilde{a}_\ell(r)$ and $\tilde{b}_\ell(r)$ both tend to infinity as $\chi\to 0$ showing~\ref{item:hayman_capture}.

As in~\eqref{eq:theta_0_def_hayman_proof} we define $\theta_0 = \chi^{1+\delta}$ for some $\al/3<\delta<\al/3+\eps/2$. The Taylor expansion of $\tilde{F}_\ell(r\eul^{\iu\theta})$ yields that for $\lvert\theta\rvert \le\theta_0$ there is some $\eta=\eta(\theta)\in (-\theta,\theta)$ such that
\[
    \tilde{F}_\ell(r\eul^{\iu\theta})
    = \tilde{F}_\ell(r) \cdot \e{\iu\theta \tilde{a}_\ell(r) - \frac{\theta^2}{2}\tilde{b}_\ell(r) + \iu^3\frac{\theta^3}{6}\tilde{c}_\ell(r\eul^{\iu\eta})}.
\]
Define
\[
    \tilde{d}_\ell(x)
    := \sum_{1\le i\le \ell} \bigg( \frac{A_{3,4+p_i}(x)}{A_{0,1+p_i}(x)}-3\frac{A_{1,2+p_i}(x)A_{2,3+p_i}(x)}{A_{0,1+p_i}(x)^2}+ 2\frac{A_{1,2+p_i}(x)^3}{A_{0,1+p_i}(x)^3}\bigg).
\]
In view of~\eqref{eq:tilde_a_b_ell_hayman_proof} we obtain for $\lvert\theta\rvert \le \theta_0$ that $\lvert\theta \tilde{c}_\ell(r\eul^{\iu\eta})\rvert \le  \theta_0^3\lvert A_{3,3}(r\eul^{\iu\eta})\rvert + \theta_0^3\lvert \tilde{d}_\ell(r\eul^{\iu\eta})\rvert$.
Since $A_{3,3}(x)$ does only have non-negative coefficients we obtain that $\lvert A_{3,3}(r\eul^{\iu\eta})\rvert \le A_{3,3}(r)$ uniformly in $\eta$ so that by~\eqref{eq:A_s_2bigOmega_hayman_proof},~\eqref{eq:A_s_t_0<rho<1_proof_hayman} (for $0<\rho<1$) and~\eqref{eq:A_s_t_bigO_proof_hayman} (for $\rho=1$) we get $\theta_0^3\lvert A_{3,3}(r\eul^{\iu\eta})\rvert \le\theta_0^3 A_{3,3}(r) =  \bigO{\chi^{3\delta - \al} } = o(1)$ as $\chi\to 0$ and uniformly in $\lvert\theta\rvert \le \theta_0$. Thus, it is left to show that $\theta_0^3\lvert \tilde{d}_\ell(r\eul^{\iu\eta})\rvert = o(1)$ to get~\ref{item:hayman_locality}. We cannot simply apply the triangle inequality to get rid of $\eul^{\iu\eta}$ as powers of $A_{0,1+p_i}$, $1\le i\le \ell$, are appearing in the denominators in the sum representing $\tilde{d}_\ell(r\eul^{\iu\eta})$. So, we first establish that $\lvert A_{0,1+p}(r\eul^{\iu\eta})\rvert \sim A_{0,1+p}(r)$ as $\chi\to0$ uniformly in $\lvert\theta\rvert\le\theta_0$ and for any $p\in\Nat_0$. 
We proceed similar to~\eqref{eq:A_0_complex_absolute_value_hayman_proof} and the subsequent text. For $j\cdot k\le \chi^{-1-\delta/2}$ we get that $\eta j k = o(1)$ since $\lvert \eta\rvert \le \theta_0=\chi^{-1-\delta}$ implying
\begin{align*}
    \lvert A_{0,1+p}(r\eul^{\iu\eta})\rvert
    \ge \bigg|\sum_{j\ge 1}j^p\sum_{k\ge 1}c_kr^{jk}\cos(\eta jk)\bigg|
    \sim  \bigg\lvert \sum_{ \substack{ j,k\ge 1,\\ jk< \chi^{-1-\delta/2}}}j^{p}c_kr^{jk} + \sum_{\substack{j,k\ge1, \\jk\ge \chi^{-1-\delta/2}}}j^{p}c_kr^{jk}
    \cos(\eta j k)\bigg\rvert .
\end{align*}
Hence, as $\chi\to0$ uniformly in $\lvert\eta\rvert\le\lvert \theta\rvert \le \theta_0 = \chi^{-1-\delta/2}$
\[
    \sum_{j\ge 1}j^p\sum_{k\ge 1}c_kr^{jk}\cos(\eta jk)
    \sim  A_{0,1+p}(r) - R_0 + R_1, 
    \quad\text{where }R_i := \sum_{j,k\ge 1, jk\ge \chi^{-1-\delta/2}}j^pc_kr^{jk}\cos(\eta j k)^i,\, i=0,1.
\]
Note that we have that $c_kr^{jk} =\bigO{k^{\al-1} \rho^{-k}\rho^{jk}\eul^{-\chi jk}}=\bigO{ k^{\al-1}\eul^{-\chi jk}}$ since $(c_k)_{k\in\Nat}\in \F(2\al/3+\eps,\al,\rho)$ and $0<\rho\le 1$. Further observe that if $1\le k\le \chi^{-1-\delta/2}$ then $\max\{1,\chi^{-1-\delta/2}/k\} = \chi^{-1-\delta/2}/k$ and $1$ otherwise. Thus we obtain for $i=0,1$
\begin{align*}
    \lvert R_i \rvert 
    &=\bigO{ \sum_{1\le k\le \chi^{-1-\delta/2}}k^{\al-1} \sum_{j\ge \chi^{-1-\delta/2}/k}j^{p}\eul^{-\chi jk}}
    + \bigO{\sum_{k> \chi^{-1-\delta/2}} k^{\al-1} \sum_{j\ge 1}  j^{p}\eul^{-\chi jk}} \\
    &= \mathcal{O}\bigg( \eul^{-\chi^{-\delta/2}}\sum_{1\le k\le \chi^{-1-\delta/2}}k^{\al-1} \frac{1}{(1- \eul^{-\chi k})^{p+1}} \bigg)
    + \mathcal{O}\bigg(\sum_{k> \chi^{-1-\delta/2}} k^{\al-1}\frac{\eul^{-\chi k}}{(1-\eul^{-\chi k})^{p+1}} \bigg).
\end{align*}
The first term is bounded by
\[
    \eul^{-\chi^{-\delta/2}}\sum_{1\le k\le \chi^{-1-\delta/2}}c_k \frac{1}{(1- \eul^{-\chi k})^{p+1}}
    =\mathcal{O}\bigg( \eul^{-\chi^{-\delta/2}} \chi^{-1-\delta/2} (\chi^{-1-\delta/2})^{\al-1} \chi^{-(p+1)} \bigg)
    =o(1).
\]
For the second term we get
\[
    \sum_{k> \chi^{-1-\delta/2}} c_k \frac{\eul^{-\chi k}}{(1-\eul^{-\chi k})^{p+1}}
    \sim \sum_{k> \chi^{-1-\delta/2}} c_k \eul^{-\chi k}
    = \mathcal{O}\bigg( \int_{ \chi^{-1-\delta/2}}^\infty x^{\al-1} \eul^{-\chi x}dx \bigg)
    =o(1).
\]
All in all, $\lvert R_i\rvert = o(1)$ for $i=0,1$. This implies that $\lvert A_{0,1+p}(r\eul^{\iu\eta})\rvert \ge A_{0,1+p}(r)+o(1)$. As additionally $A_{0,1+p}$  has only non-negative coefficients we obtain $\lvert A_{0,1+p}(r\eul^{\iu\eta})\rvert \le A_{0,1+p}(r)$. Thus, for any $p\in\Nat_0$, we conclude that $A_{0,1+p}(r\eul^{\iu\eta})\sim A_{0,1+p}(r)$ as $\chi\to 0$ and uniformly in $\lvert\eta\rvert\le \lvert\theta\rvert \le \theta_0$. Using that $A_{s,t}$ has only non-negative coefficients for any $s,t\in\Nat_0$ we then get the estimate
\begin{align*}
    \lvert \tilde{d}_\ell(r\eul^{\iu\eta}) \rvert 
    &\le \sum_{1\le i\le \ell}  \left(\frac{A_{3,4+p_i}(r)}{\lvert A_{0,1+p_i}(r\eul^{\iu\eta})\rvert} + 3\frac{A_{1,2+p_i}(r)A_{2,3+p_i}(r)}{\lvert  A_{0,1+p_i}(r\eul^{\iu\eta})\rvert^2} + 2\frac{A_{1,2+p_i}(r)^3}{\lvert A_{0,1+p_i}(r\eul^{\iu\eta})\rvert^3}\right) \\
    &\sim \sum_{1\le i\le \ell}\left(\frac{A_{3,4+p_i}(r)}{A_{0,1+p_i}(r)} + 3\frac{A_{1,2+p_i}(r)A_{2,3+p_i}(r)}{A_{0,1+p_i}(r)^2} + 2\frac{A_{1,2+p_i}(r)^3}{A_{0,1+p_i}(r)^3}\right).
\end{align*}
With H{\"o}lder's inequality we obtain for any $p\in\Nat_0$
\begin{align*}
     A_{2,3+p}(r)^2 
     &= \bigg( \sum_{j\ge 1}\sum_{k\ge 1}j^{p/2+3/2}k^{3/2}\sqrt{c_k r^{jk}} \cdot j^{p/2+1/2}\sqrt{kc_kr^{jk}} \bigg)^2
    \le A_{3,4+p}(r)A_{1,2+p}(r) \quad\text{and}  \\
    A_{1,2+p}(r)^3 
    &= \bigg(\sum_{k\ge 1} j^{p/3+1} k (c_kr^{jk})^{1/3} \cdot j^{2p/3}(c_kr^{jk})^{2/3} \bigg)^3
    \le A_{3,4+p}(r) A_{0,1+p}(r)^2.
\end{align*}
The bounds in the previous display together with $A_{1,2+p}(r)^2 \le A_{2,3+p}(r)A_{0,1+p}(r)$, which was showed in~\eqref{eq:Hoelder_A_1_2_hayman_proof}, entail the estimates 
\[
    \frac{A_{1,2+p}(r)A_{2,3+p}(r)}{A_{0,1+p}(r)^2} \le \frac{A_{2,3+p}(r)^2}{A_{0,1+p}(r)A_{1,2+p}(r)} 
    \le \frac{A_{3,4+p}(r)}{A_{0,1+p}(r)}\quad\text{and}\quad
    \frac{A_{1,2+p}(r)^3}{A_{0,1+p}(r)^3} \le \frac{A_{3,4+p}(r)}{A_{0,1+p}(r)}
\] 
hold true. We conclude that
\[
    \lvert\tilde{d}_\ell(r\eul^{\iu\eta})\rvert 
    = \bigO{\sum_{1\le i\le \ell}\frac{A_{3,4+p_i}(r)}{A_{0,1+p_i}(r)}}.
\]
Since according to~\eqref{eq:A_s_s+1_o(1)_hayman_proof} we have for any $p\in\Nat_0$ that $A_{3,4+p}(r)/(A_{0,1+p}(r)A_{3,3}(r)) = o(1)$ as $\chi\to 0$ we obtain in turn $\theta_0^3\lvert  \tilde{d}_\ell(r\eul^{\iu\eta})\rvert = o(\theta_0^3A_{3,3}(r))=o(1)$ as $\chi\to 0$ and uniformly in $\lvert\eta\rvert\le\lvert\theta\rvert\le\theta_0$ finishing the proof for~\ref{item:hayman_locality}.

In order to show~\ref{item:hayman_decay} we use that $\sum_{j\ge 1}j^{p_i}C(x^j)$ has only non-negative coefficients and obtain similar to~\eqref{eq:hayman_decay_proof_1} 
\[
    \bigg\lvert \frac{\tilde{F}_\ell(r\eul^{\iu \theta})}{\tilde{F}_\ell(r)} \bigg\rvert
    \le \bigg\lvert \frac{G(r\eul^{\iu \theta})}{G(r)} \bigg\rvert
    = \e{-2\sum_{j\ge 1} j^{-1} \sum_{k\ge 1}c_kr^{jk}\sin^2(\theta j k/2) }
    \le \e{-2\sum_{k\ge 1}c_kr^k\sin^2(\theta k/2)}.
\]
Noting that we chose the same $\theta_0$ as in~\eqref{eq:theta_0_def_hayman_proof} we obtain as in~\eqref{eq:hayman_decay_proof_2} that there is some $f>0$ yielding as $\chi\to 0$ and uniformly in $ \theta_0 \le \lvert\theta\rvert <\pi$
\[
    \bigg\lvert \frac{\tilde{F}_\ell(r\eul^{\iu \theta})}{\tilde{F}_\ell(r)} \bigg\rvert
    \le \eul^{- \chi^{-f}}.
\]
Since $\tilde{b}_\ell(r) \sim A_{2,2}(r)=\Omega(\chi^{-2\al/3+\eps+2})$ due to~\eqref{eq:tilde_a_l_asympt} as well as~\eqref{eq:A_s_2bigOmega_hayman_proof},~\eqref{eq:A_s_t_0<rho<1_proof_hayman} (for $0<\rho<1$) and~\eqref{eq:A_s_t_bigO_proof_hayman} (for $\rho=1$) this shows~\ref{item:hayman_decay} and we are done.
\end{proof}

\subsection{Proof of the Main Results}
\label{sec:proof_main}
\subsubsection{Proofs for Coefficient Extraction and Counting} 
\label{sec:coro:coeff_eC(x)_C(x)^k}
\begin{proof}[Proof of Theorem~\ref{thm:coeff_eC(x)_C(x)^k} (set case).]
Let $z_n$ be such that $z_nC'(z_n)=n$. By applying Lemma~\ref{lem:asymptotics_expansive_sum} we obtain that as $\eta \to 0$,
\begin{equation}
\label{eq:CC'C''}
    C(\rho e^{-\eta}) = \Theta(h(\eta^{-1}) \eta^{-\alpha}),
    \quad
    C'(\rho e^{-\eta}) = \Theta(h(\eta^{-1})\eta^{-(\alpha+1)}),
    \quad
    C''(\rho e^{-\eta}) = \Theta(h(\eta^{-1})\eta^{-(\alpha+2)}).
\end{equation}
In particular, if we set $z_n=\rho \eul^{-\eta_n}$, then $\eta_n \to 0$ as $n\to\infty$. With this observation let us proceed with extracting the $n$-th coefficient in $F_\ell(x) = S(x)C(x)^\ell$.
Lemma~\ref{lem:h_admissible_gen_series} guarantees that $F_\ell$ is $H$-admissible for any $\ell\in\Nat_0$, and so, by applying Lemma~\ref{lem:hayman} we obtain that
\[
    [x^n]S(x)C(x)^\ell 
    = \frac{S(z_n)C(z_n)^\ell}{\sqrt{2\pi z_n^2C''(z_n)}} z_n^{-n} \bigg( \e{-\frac{(a_\ell(z_n)-n)^2}{2b_\ell(z_n)}} + o(1)\bigg),
\]
where $a_\ell(x)$ and $b_\ell(x)$ are the functions defined in~\eqref{eq:a,b,c_hayman_def} for $f_\ell(x) = \log F_\ell(x) = C(x) + \ell \ln C(x)$, see~\eqref{eq:def_a_ell}. Moreover, from~\eqref{eq:def_a,b,c_set_ell_hayman_proof} and by plugging in~\eqref{eq:CC'C''} we obtain that
\[
    a_\ell(z_n) - n = A_1(z_n) + \ell \frac{A_1(z_n)}{A_0(z_n)}  - n = \Theta(\eta_n^{-1}),
    \qquad
    b_\ell(z_n) = \Theta(h(\eta_n^{-1}) \eta_n^{-(\alpha+2)}).
\]
Since $\alpha > 0$, the claim follows.
\end{proof}
\begin{proof}[Proof of Theorem~\ref{thm:coeff_eC(x)_C(x)^k} (multiset case).]
In the forthcoming argumentation recall the definitions of $A_s(x)$ and $A_{s,t}(x)$ from~\eqref{eq:A_s_def_hayman_proof} and~\eqref{eq:A_s_t_def_hayman_proof}. In particular, note that
\[
    A_0(\rho\eul^{-\eta})=C(\rho\eul^{-\eta}),\quad
     A_1(\rho\eul^{-\eta})= \rho\eul^{-\eta}C'(\rho\eul^{-\eta})
    \quad\text{and}\quad 
     A_2(\rho\eul^{-\eta})\sim (\rho\eul^{-\eta})^2 C''(\rho\eul^{-\eta}),\quad \text{as }\eta\to 0
\]
as well as
\begin{align*}
    A_{0,0}(\rho\eul^{-\eta})
    &= \sum_{j\ge1}C(\rho\eul^{-\eta})/j , \quad 
    A_{1,1}(\rho\eul^{-\eta})
    =\sum_{j\ge 1} (\rho\eul^{-\eta})^jC'((\rho\eul^{-\eta})^j) 
    \quad\text{and} \\
    A_{2,2}(\rho\eul^{-\eta})
    &\sim \sum_{j\ge 1}j(\rho\eul^{-\eta})^{2j}C''((\rho\eul^{-\eta})^j)
    ,\quad \text{as }\eta\to0.
\end{align*}
Let $z_n = \rho\eul^{-\eta_n}$ be the solution to $z_nC'(z_n)=n$ and $q_n=\rho\eul^{-\xi_n}$ to $\sum_{j\ge 1}q_n^jC'(q_n^j) = n$.
Due to~\eqref{eq:A_s_t_bigO_proof_hayman} and~\eqref{eq:CC'C''} we have that $\eta_n\to0 $ as $n\to\infty$ for $0<\rho\le 1$ and $\xi_n\to0$ as $n\to\infty$ for $0<\rho<1$. Equation~\eqref{eq:A_s_t_0<rho<1_proof_hayman} implies that $\xi_n\to 0$ as $n\to\infty$. With this at hand, we proceed extracting the $n$-th coefficient in $\tilde{F}_\ell(x) = G(x)\prod_{1\le i\le \ell}\sum_{j\ge 1}j^{p_i}C(x^j)$. Due to Lemma~\ref{lem:h_admissible_gen_series} we know that $\tilde{F}_\ell$ is $H$-admissible so that applying Lemma~\ref{lem:hayman} delivers 
\[
    [x^n]\tilde{F}_\ell(x)
    = \frac{\tilde{F}_\ell(w_n)}{\sqrt{2\pi \tilde{b}_\ell(\omega_n)}} 
    w_n^{-n} \bigg( \e{-\frac{(\tilde{a}_\ell(\omega_n)-n)^2}{2\tilde{b}_\ell(\omega_n}) } + o(1) \bigg), \qquad w_n\in\{z_n,q_n\},
\]
where $\tilde{a}_\ell$ and $\tilde{b}_\ell$ are the functions defined in~\eqref{eq:a,b,c_hayman_def} for $\tilde{f}_\ell = \log \tilde{F}_\ell$, see~\eqref{eq:tilde_a_b_ell_hayman_proof}. Equation~\eqref{eq:tilde_a_l_asympt} yields $\tilde{b}_\ell(w_n) \sim A_{2,2}(w_n)$ for $w_n\in\{z_n,q_n\}$. Recalling~\eqref{eq:tilde_a_b_ell_hayman_proof} and using \eqref{eq:A_s_t_0<rho<1_proof_hayman} for $0<\rho<1$ we further obtain for $0<\rho\le 1$ that $\tilde{a}_\ell(w_n) = n + \mathcal{O}\big( \sum_{1\le i\le \ell}A_{1,2+p_i}(w_n)/A_{0,1+p_i}(w_n)\big)$. We deduce
\[
    [x^n]\tilde{F}_\ell(x)
    = \frac{\tilde{F}_\ell(w_n)}{\sqrt{2\pi A_{2,2}(w_n)}} 
    w_n^{-n} \bigg( \e{\bigO{\bigg(\sum_{1\le i\le \ell}\frac{A_{1,2+p_i}(w_n)}{A_{0,1+p_i}(w_n)}\bigg)^2\frac{1}{A_{2,2}(w_n)}} } + o(1) \bigg), \qquad w_n\in\{z_n,q_n\}.
\]
According to~\eqref{eq:A_s_2bigOmega_hayman_proof},~\eqref{eq:A_s_t_0<rho<1_proof_hayman} for $0<\rho<1$ and~\eqref{eq:A_s_t_bigO_proof_hayman} for $\rho=1$  we obtain for any $p\in\Nat_0$ that
\[
    \frac{A_{1,2+p}(w_n)^2}{A_{0,1+p}(w_n)^2A_{2,2}(w_n)}
    = \bigO{\beta_n^{3\eps}} 
    = o(1), \quad \beta_n\in\{\eta_n,\xi_n\}.
\]
Hence $[x^n]\tilde{F}_\ell(x) \sim \tilde{F}_\ell(w_n)/\sqrt{2\pi A_{2,2}(w_n)}w_n^{-n}$ for $w_n\in\{z_n,q_n\}$.
Note that we can rewrite $\tilde{F}_\ell(x) = \e{A_{0,0}(x)}\prod_{1\le i\le \ell}A_{0,p_i}(x)$. Accordingly, if $0<\rho<1$, we get due to Equation~\eqref{eq:A_s_t_0<rho<1_proof_hayman} that $\tilde{F}_\ell(z_n) \sim \e{\sum_{j\ge 2}C(\rho^j)/j} \e{C(z_n)} C(z_n)^\ell$.
In addition, if $\rho<1$,~\eqref{eq:A_s_t_0<rho<1_proof_hayman} gives that $A_{2,2}(z_n)\sim z_n^2C''(z_n)$. So,~\eqref{eq:coeff_ogf_muliset_rho<1} follows. If $\rho=1$ we use that
$A_{2,2}(q_n) \sim \sum_{j\ge 1}jq_n^{2j}C''(q_n^j)$. This entails~\eqref{eq:coeff_ogf_muliset_rho=1} and we are done.
\end{proof}

\subsubsection{Proofs for the Distribution of the Largest and Smallest clusters}
\label{sec:proof_smallest/largest}

\subsubsection*{Proof of Theorem~\ref{thm:largest_comp_hayman}} 
\label{sec:thm:largest_comp_hayman}

\begin{proof}[Proof of Theorem~\ref{thm:largest_comp_hayman}.]
We conduct the proof for $\mathcal{L}(\mathsf{S}^{(n)})$ and $\mathcal{L}(\mathsf{G}^{(n)})$ simultaneously. Therefore let $\mathsf{F}^{(n)}\in\Omega_n$ be either $\mathsf{S}^{(n)}$ or $\mathsf{G}^{(n)}$ and we will specify when there is need to differentiate the two cases. We start with some statements about $z_n=\rho\eul^{-\eta_n}$ and $q_n=\rho\eul^{-\xi_n}$ solving $z_nC'(z_n)=n$ and $\sum_{j\ge 1}q_n^jC'(q_n^j)=n$, respectively. Set 
\begin{align}
	\label{eq:def_w_beta_hayman}
	w_n
	=\rho\eul^{-\beta_n} 
	= \begin{dcases*}
		z_n, &$\mathsf{F}^{(n)}=\mathsf{S}^{(n)}$ or $\mathsf{F}^{(n)}=\mathsf{G}^{(n)}$ and $0<\rho<1$ \\
		q_n, &$\mathsf{F}^{(n)}=\mathsf{G}^{(n)}$ and $\rho=1$
	\end{dcases*}
	\quad\text{and}\quad 
	\beta_n =\ln(\rho/w_n).
\end{align}
According to~\eqref{eq:sv_estimate_hayman} we obtain the bounds $k^{-\delta}\le h(k)\le k^{\delta}$ for  any $0<\delta<\al$ and $k$ sufficiently large; so $(c_k)_{k\in\Nat}\in \F(\al-\delta,\al+\delta,\rho)$. Hence, due to~\eqref{eq:A_s_2bigOmega_hayman_proof},~\eqref{eq:A_s_t_0<rho<1_proof_hayman} and~\eqref{eq:A_s_t_bigO_proof_hayman}
$$
    n = 
    z_nC'(z_n) =
    \mathcal{O}(\eta_n^{-(\al-\delta+1)})\cap\Omega(\eta_n^{-(\al+\delta+1)}),
    \quad
    n=\sum_{j\ge 1}q_n^jC'(q_n^j) = \mathcal{O}(\xi_n^{-(\al-\delta+1)})\cap\Omega(\xi_n^{-(\al+\delta+1)}).
$$
Thus 
\begin{align}
	\label{eq:set_max_xi_n_hayman}
	\beta_n = \mathcal{O}(n^{-1/(\al+\delta+1)}) \cap \Omega(n^{-1/(\al-\delta+1)})
	\quad\text{and}\quad 
	C(w_n) 
	= \mathcal{O}(n^{(\al+\delta)/(\al-\delta+1)}) \cap \Omega(n^{(\al-\delta)/(\al+\delta+1)}).
\end{align}
We proceed with the proof of the theorem. We obtain that 
\begin{align}
	\label{eq:set_max_prob_as_set_hayman}
	\pr{\mathcal{L}(\mathsf{F}^{(n)})\le s}
	=\pr{\mathsf{F}^{(n)}\in \Omega_{n,\le s}},
	\quad	\Omega_{n,\le s}:=\{(N_1,\dots,N_n)\in\Omega_n: \forall s<i\le n: N_i=0\}.
\end{align}
Further, for $\ell \in \Nat$ and $\mathrm{k}=(k_1,\dots,k_\ell)\in\Nat^\ell$ let 
\begin{align*}
	\Omega_{n,\mathrm{k}}
	:= \{(N_1,\dots,N_n)\in\Omega_n:\forall 1\le i\le \ell: N_{k_i}\ge 1\}.
\end{align*}
With this at hand, we obtain
\begin{align}
	\label{eq:largest_comp_proof_Omega_les}
	\Omega_{n,\le s}
	= \Omega_{n} \setminus \bigcup_{k>s}\Omega_{n,k}.
\end{align}
Define
\[
	I_\ell
	=I_\ell(n) 
	:= \sum_{s<k_1<\cdots< k_\ell \atop k_1+\cdots+k_\ell \le n}\pr{\mathsf{F}^{(n)} \in \Omega_{n,\mathrm{k}}},
	\qquad \ell\in\Nat.
\]
Here comes the inclusion/exclusion principle into play. In light of~\eqref{eq:set_max_prob_as_set_hayman} and~\eqref{eq:largest_comp_proof_Omega_les} we obtain that
\begin{align*}
	\pr{\mathcal{L}(\mathsf{F}^{(n)})\le s}
	&= \pr{\mathsf{F}^{(n)}\in \Omega_n\setminus \bigcup_{k>s}\Omega_{n,k}}
	= \pr{\mathsf{F}^{(n)}\in \Omega_n}
	+\sum_{\ell\ge 1}(-1)^\ell I_\ell  
	= 1 + \sum_{\ell\ge 1}(-1)^\ell I_\ell .
\end{align*}
Computing the union of events by the inclusion/exclusion principle entails the helpful ``sandwich-property'' that for any $M>1$
\begin{align}
	\label{eq:set_max_inclusion_sandwich_hayman}
	1+\sum_{1\le \ell\le 2M-1}(-1)^\ell I_\ell
	\le \pr{\mathcal{L}(\mathsf{F}^{(n)})\le s}
	\le 1+\sum_{1\le \ell\le 2M}(-1)^\ell I_\ell.
\end{align}
This has the great advantage that we can take a large but fixed $M$ when analysing $\mathcal{L}(\mathsf{F}^{(n)})$;  investigating $I_\ell=I_\ell(n)$ for fixed $\ell$ is much easier, as we only need to let one parameter (namely $n$) tend to infinity. 
Recall that for $t\in\Real$ we defined
\[
	s_n
	= s(t,\beta_n)
	:= \beta_n^{-1} \big( \ln X(\beta_n) + t\big) ,\quad X(\beta_n)
	:=\frac{1}{\Gamma(\al)} C(w_n) (\ln C(w_n))^{\al-1}
	\frac{h(\beta_n^{-1}\ln C(w_n))}{h(\beta_n^{-1})}.
\]
Since $h(\beta_n^{-1}\ln C(w_n))/h(\beta_n^{-1})=\bigO{\ln C(w_n)}$ due to~\eqref{eq:sv_estimate_hayman} we obtain 
\begin{align}
	\label{eq:s(t,beta)_asymptotic_hayman}
	s_n
	\sim \beta_n^{-1} \ln C(w_n), \quad n\to\infty.
\end{align}
In order to get a grip on~\eqref{eq:set_max_inclusion_sandwich_hayman} we claim that
\begin{align}
	\label{eq:F_n_k_i_as_F_and_c_i_sim_hayman}
	\pr{\mathsf{F}^{(n)}\in \Omega_{n,\mathrm{k}}}
	\sim \prod_{1\le i\le \ell}c_{k_i}w_n^{k_i}, \quad 
	\ell\in\Nat,\, s_n< k_1< \cdots < k_\ell < s_n + o(s_n)
\end{align}
and for any $\eps>0$ when $n$ sufficiently large
\begin{align}
	\label{eq:F_n_k_i_as_F_and_c_i_le_hayman}
	\pr{\mathsf{F}^{(n)}\in \Omega_{n,\mathrm{k}}}
	\le (1+\eps)  \cdot \prod_{1\le i\le \ell}c_{k_i}w_n^{k_i}, \quad 
	\ell\in\Nat,\, k_1,\dots,k_\ell > 0.
\end{align}
The proof of~\eqref{eq:F_n_k_i_as_F_and_c_i_sim_hayman} and~\eqref{eq:F_n_k_i_as_F_and_c_i_le_hayman}, which is rather lengthy and relies heavily upon the underlying generating series to be $H$-admissible, is deferred to the end of this section for better readability.  
We continue by showing
\begin{align}
	\label{eq:set_max_A_ell_hayman}
	I_\ell 
	\sim \frac{1}{\ell!}(\eul^{-t})^\ell, \quad  \ell \in \Nat.
\end{align}
Then, by choosing $M\in\Nat$ sufficiently large in~\eqref{eq:set_max_inclusion_sandwich_hayman}, we get that $\pr{\mathcal{L}(\mathsf{F}^{(n)}) \le s_n}$ gets arbitrarily close to $1 + \sum_{\ell\ge 1}(-1)^\ell (\eul^{-t})^\ell/\ell! = \eul^{-\eul^{-t}}$ and the claim of Theorem~\ref{thm:largest_comp_hayman} follows. 
We first treat the case $\ell=1$ in~\eqref{eq:set_max_A_ell_hayman}, which is instructive for the remaining cases $\ell\ge 2$.
Let $\nu\equiv \nu(\beta_n)$ be in $\omega(\beta_n^{-1})\cap o(s_n)$, which is possible due to~\eqref{eq:s(t,beta)_asymptotic_hayman}. Note further that $\nu=o(\beta_n^{-1}\ln C(w_n))=o(n)$ as $n\to\infty$ according to~\eqref{eq:set_max_xi_n_hayman}. Then 
\[
	I_1
	= \bigg( \sum_{s_n<k\le s_n+\nu} + \sum_{k>s_n+\nu}\bigg) \pr{\mathsf{F}^{(n)}\in \Omega_{n,k}}
	=: I_{1,1} + I_{1,2}.
\]
We are first going to show that $I_{1,1}\sim\eul^{-t}$. Due to~\eqref{eq:F_n_k_i_as_F_and_c_i_sim_hayman} we obtain
\begin{align}
	\label{eq:set_max_A_1_1_computation_hayman}
	I_{1,1} 
	\sim \sum_{s_n<k\le s_n+\nu} c_kw_n^k
	= \sum_{s_n<k\le s_n+\nu} h(k) k^{\al-1} \eul^{-\beta_n k}
	\sim h(s_n) s_n^{\al-1} \eul^{-\beta_n s_n} \sum_{0<k \le \nu} \eul^{-\beta_n k}
	\sim h(s_n) s_n^{\al-1} \eul^{-\beta_n s_n}  \beta_n^{-1}.
\end{align}
Recalling $s_n\sim \beta_n^{-1}\ln C(w_n)$ from~\eqref{eq:s(t,beta)_asymptotic_hayman} and plugging in $s_n$ into the expression in the previous display gives by Lemma~\ref{lem:asymptotics_expansive_sum}
\begin{align}
	\label{eq:set_max_A_1_1_hayman}
	I_{1,1}
	\sim \frac{\Gamma(\al) h(\beta_n^{-1})\beta_n^{-\al}}{C(w_n)}\cdot \eul^{-t}
	\sim \eul^{-t}.
\end{align}
For $k>s_n+\nu$ we use the estimate~\eqref{eq:F_n_k_i_as_F_and_c_i_le_hayman} and obtain
\begin{align*}
	\pr{\mathsf{F}^{(n)}\in \Omega_{n,k}}
	= \bigO{c_kw_n^k}.
\end{align*}
With this at hand and since $\nu=\omega(\beta_n^{-1}$) Lemma~\ref{lem:asymptotics_expansive_integral_arb_start} reveals for $k>s_n+\nu$ that
\begin{align}
	\label{eq:set_max_A_1_2_hayman}
	I_{1,2}
	= \bigO{\sum_{k>s_n+\nu}c_kw_n^k}
	= \bigO{h(s_n)s_n^{\al-1}\eul^{-\beta_n (s_n+\nu)}\beta_n^{-1}}
	= \bigO{\eul^{-t} \eul^{-\beta_n \nu}} = o(1).
\end{align}  
It follows that $I_1\sim\eul^{-t}$, as claimed. Next, we treat the remaining $\ell\ge 2$ in~\eqref{eq:set_max_A_ell_hayman}. Define
$$
    {B}
	:=
	\{\mathrm{k}\in \Nat^\ell:\forall 1\le i\le \ell: k_i>s_n, k_1+\cdots+k_\ell\le n\},
$$
and
$$
	{B}_=
	:= \{\mathrm{k}\in{B}: \exists 1\le i< j \le \ell: k_i=k_j\}.
$$
In addition we need the sets
\begin{align*}
	{B}_\le 
	:= \{\mathrm{k}\in{B}: \forall 1\le i\le \ell: k_i\le s_n+\nu\},
	\quad
	{B}_>
	:= \{\mathrm{k}\in{B}: \exists 1\le i\le \ell: k_i> s_n+\nu\}.
\end{align*}
Further set 
\[
	f(\mathrm{k})
	:= \prod_{1\le i\le k}c_{k_i}w_n^{k_i} ,
	\quad\mathrm{k}\in{B}.
\]
With these definitions at hand and since $\nu = o(s_n)$ we obtain by applying ~\eqref{eq:F_n_k_i_as_F_and_c_i_sim_hayman} and~\eqref{eq:F_n_k_i_as_F_and_c_i_le_hayman}
\begin{align*}
	\nonumber
	I_\ell 
	&= \bigg( \sum_{\mathrm{k}\in {B}_\le ,\atop k_1<\cdots<k_\ell}
	+\sum_{\mathrm{k}\in{B}_>, \atop k_1<\cdots<k_\ell } \bigg)
	\pr{\mathsf{F}^{(n)}\in \Omega_{n,k_1,\dots,k_\ell}} 
	\sim  \frac{1}{\ell!}\bigg(\sum_{{B}_\le}-\sum_{{B}_=}\bigg)f(\mathrm{k}) 
	+ \mathcal{O}\bigg(\sum_{{B}_>}f(\mathrm{k})\bigg).
\end{align*}
where we slightly abuse the notation and write $\sum_{\mathcal{X}}=\sum_{x\in \mathcal{X}}$ for any set $\mathcal{X}$.
We will prove
\begin{align}
	\label{eq:set_max_order_of_sums_hayman}
	\frac{1}{\ell!}\sum_{{B}_\le}f(\mathrm{k})
	\sim \frac{1}{\ell!}(\eul^{-t})^\ell,\quad 
	\sum_{{B}_>}f(\mathrm{k}) = o(1) \quad\text{and}\quad 
	\sum_{{B}_=}f(\mathrm{k})=o(1),
\end{align}
from which~\eqref{eq:set_max_A_ell_hayman} follows immediately.
We start with the first asymptotic identity in~\eqref{eq:set_max_order_of_sums_hayman}. 
The estimate $k_i\le s_n+ \nu$ for all $1\le i\le \ell$ implies that $k_1+\cdots+k_\ell \le \ell(s_n+\nu)=o(n)$  so that with~\eqref{eq:set_max_A_1_1_hayman}
\[
	\sum_{{B}_\le}f(\mathrm{k}) 
	= \sum_{{B}_\le} \prod_{1\le i\le \ell}c_{k_i}w_n^{k_i} 
	= I_{1,1}^\ell 
	\sim (\eul^{-t})^\ell.
\]
Next we show the second claim in~\eqref{eq:set_max_order_of_sums_hayman}. By applying~\eqref{eq:set_max_A_1_2_hayman} we obtain
\[
	\sum_{{B}_>}f(\mathrm{k})
	= \sum_{{B}_>} \prod_{1\le i\le \ell}c_{k_i}w_n^{k_i}
	\le I_{1,2}\cdot I_{1}^{\ell-1}
	= o(\eul^{-t(\ell-1)})
	= o(1).
\]
Finally, we show the third asymptotic identity in~\eqref{eq:set_max_order_of_sums_hayman}. 
We get
\[
	\sum_{{B}_{=}}f(\mathrm{k})
	\le \binom{\ell}{2} \sum_{s_n<k_1,\dots,k_{\ell-1}\le n}f(k_1,k_1,k_2,\dots,k_\ell)
	\le \binom{\ell}{2} \sum_{s_n<k\le n}c_k^2 w_n^{2k} \cdot I_{1}^{\ell-2}.
\]
According to Lemma~\ref{lem:asymptotics_expansive_integral_arb_start} and analogous to~\eqref{eq:set_max_A_1_2_hayman} we find that
\[
	\sum_{k>s_n+\nu}c_k^2 w_n^{2k}
	= \bigO{h(s_n)^2s_n^{2(\al-1)}\eul^{-2\beta_n(s_n+\nu)} \beta_n^{-1}}
	=o(1).
\]
Additionally, just as in~\eqref{eq:set_max_A_1_1_computation_hayman} and~\eqref{eq:set_max_A_1_1_hayman} we compute that 
\[
	\sum_{s_n<k\le s_n+\nu}c_k^2 w_n^{2k}
	\sim h(s_n)^2 s_n^{2(\al-1)}\eul^{-2\beta_n s_n} \sum_{1\le k\le \nu} \eul^{-2\beta_n k}
	=\bigO{ (h(s_n)s_n^{\al-1}\eul^{-\beta_n s_n} \beta_n^{-1})^2 \beta_n}
	= \bigO{\eul^{-2t} \beta_n}=o(1).
\]
Since $I_{1}^{\ell-2} \sim \eul^{-t(\ell-2)}$ this shows that $\sum_{{B}_{=}}f(\mathrm{k})=o(1)$ and we haven proven~\eqref{eq:set_max_order_of_sums_hayman} and we are done.

\vspace{3mm}
\noindent
\textit{Proof of~\eqref{eq:F_n_k_i_as_F_and_c_i_sim_hayman} and~\eqref{eq:F_n_k_i_as_F_and_c_i_le_hayman}.} 
Let $\ell\in\Nat$, $\mathrm{k}:=(k_1,\dots,k_\ell)\in\Nat^\ell$ and recall that
\[
	\Omega_{n,\mathrm{k}}
	:= \{(N_1,\dots,N_n)\in \Omega_n: \forall 1\le i\le\ell: N_{k_i}\ge 1\}.
\]
We further abbreviate
\[
	\sum_{\Omega_{n,\mathrm{k}}}  = \sum_{(N_1,\dots,N_n)\in\Omega_{n,\mathrm{k}}}, \qquad 
	\Sigma_{\mathrm{k}} := k_1+\cdots+k_\ell
	\qquad\text{and}\qquad 
	[n]_{\neq\mathrm{k}} 
	:= \{1,\dots,n\} \setminus\{k_1,\dots,k_\ell\}.
\]
From here on we treat the two cases ``set'' and ``multiset'' seperately. 
\paragraph{The set case.} 
From~\eqref{eq:distribution_cluster_set} we obtain
\begin{align}  \label{eq:set_cluster_as_sum_hayman}
	\pr{\mathsf{S}^{(n)} \in \Omega_{n,\mathrm{k}}}
	&= \frac{1}{[x^n]S(x)} \sum_{\Omega_{n,\mathrm{k}}}	
	\prod_{1\le i\le n} \frac{c_i^{N_i}}{N_i!}
	= \frac{1}{[x^n]S(x)} \cdot \prod_{1\le i\le \ell} c_{k_i} \cdot 
	\sum_{\Omega_{n,\mathrm{k}}}	
	\prod_{i\in[n]_{\neq\mathrm{k}} } \frac{c_i^{N_i}}{N_i!}
	\cdot \prod_{1\le i\le \ell} \frac{c_{k_i}^{N_{k_i}-1}}{N_{k_i}!}.
\end{align}
First observe that if $(N_1,\dots,N_n)\in\Omega_{n,\mathrm{k}}$ we necessarily have that $N_i=0$ for $i> n-\Sigma_{\mathrm{k}}$.  Since further for $N_{k_i}\ge 1$ the estimate $c_{k_i}^{N_{k_i}-1}/N_{k_i}!\le c_{k_i}^{N_{k_i}-1}/(N_{k_i}-1)!$ holds for $1\le i \le \ell$ we get that 
\begin{align*}
	\sum_{\Omega_{n,\mathrm{k}}}	
	\prod_{i\in[n]_{\neq\mathrm{k}} } \frac{c_i^{N_i}}{N_i!}
	\cdot \prod_{1\le i\le \ell} \frac{c_{k_i}^{N_{k_i}-1}}{N_i!}
	&\le \sum_{\Omega_{n-\Sigma_{\mathrm{k}}}} 
	\prod_{i=1}^{n-\Sigma_{\mathrm{k}}} \frac{c_i^{N_i}}{N_i!} 
	= [x^{n-\Sigma_{\mathrm{k}}}]S(x).
\end{align*}
All in all, we have shown that
\begin{align}
	\label{eq:set_cardinalities_as_coefficients_hayman}
	 \pr{\mathsf{F}^{(n)} \in \Omega_{n,\mathrm{k}}}
	\le \frac{[x^{n-\Sigma_{\mathrm{k}}}]S(x)}{[x^n]S(x)} \cdot \prod_{1\le i\le k}c_{k_i}.
\end{align}
Lemma~\ref{lem:h_admissible_gen_series} asserts that $S(x)$ is $H$-admissible. Recall the definition of $A_s(x)$ from~\eqref{eq:A_s_def_hayman_proof} and note that the functions $a(x),b(x)$ from~\eqref{eq:a,b,c_hayman_def} for $f(x)=\ln S(x)$ are exactly $A_1(x),A_2(x)$, compare to~\eqref{eq:def_a,b,c_set_ell_hayman_proof} with $\ell=0$. We chose $z_n$ such that $A_1(z_n)=z_nC'(z_n)=n$. Let $\eps>0$. Then Lemma~\ref{lem:hayman} reveals that there is some (potentially large but fixed) $K=K(\eps)>0$ such that for $n$ sufficiently large
\begin{align}
	\label{eq:set_cluster_as_sum_upper_bound_hayman}
	\frac{[x^{n-\Sigma_{\mathrm{k}}}]S(x)}{[x^n]S(x)}
	\le (1+\eps) \cdot z_n^{\Sigma_{\mathrm{k}}},
	\quad \text{uniformly in }\mathrm{k}\in\Nat^\ell\text{ with }0<\Sigma_{\mathrm{k}}\le n-K.
\end{align}
For all $\mathrm{k}\in\Nat^\ell$ such that $n-K< \Sigma_{\mathrm{k}} \le n$ we have that $[x^{n-\Sigma_{\mathrm{k}}}]S(x)=\bigO{1}$ and hence with Lemma~\ref{lem:hayman}, noting that $z_n\le\rho\le1$,
\begin{align}
	\label{eq:set_cluster_as_sum_upper_bound_2_hayman}
	\frac{[x^{n-\Sigma_{\mathrm{k}}}]S(x)}{[x^n]S(x)}
	=\bigO{ z_n^{n}\sqrt{b(z_n)}/S(z_n) }
	= o\big(z_n^{\Sigma_{\mathrm{k}}}\big)
	, \quad\text{uniformly in }n-K< \Sigma_{\mathrm{k}} \le n.
\end{align}
In any case, by combining~\eqref{eq:set_cardinalities_as_coefficients_hayman}--\eqref{eq:set_cluster_as_sum_upper_bound_2_hayman}, and since $\ell$ is fixed, we obtain for $n$ sufficiently large
\begin{align}
	\label{eq:set_cluster_as_sum_upper_bound_final_hayman}
	\pr{\mathsf{S}^{(n)} \in \Omega_{n,\mathrm{k}}}
	\le (1+\eps) \cdot \prod_{1\le i\le \ell}c_{k_i}z_n^{k_i}, \quad \text{uniformly in }\mathrm{k}\in\Nat^\ell\text{ with }
	0< \Sigma_{\mathrm{k}} \le n.
\end{align}
This shows~\eqref{eq:F_n_k_i_as_F_and_c_i_le_hayman}. Let us next demonstrate that~\eqref{eq:F_n_k_i_as_F_and_c_i_sim_hayman} is valid. In light of~\eqref{eq:set_cluster_as_sum_upper_bound_final_hayman} it is left to show that 
\begin{align}
	\label{eq:set_cluster_as_sum_lower_bound_final_hayman}
	\pr{\mathsf{S}^{(n)} \in \Omega_{n,\mathrm{k}}}
	\ge (1+o(1)) \cdot \prod_{1\le i\le \ell}c_{k_i}z_n^{k_i}
	\quad \text{for }s_n <k_1<\cdots <k_\ell < s_n + o(s_n) \text{ and as }n\to\infty.
\end{align}
For that let $S_{\neq\mathrm{k}}(x)$ be the generating series of elements such that there are no clusters of sizes $k_1,\dots,k_\ell$, that is,
\begin{align}
	\label{eq:set_egf_excluded_cluster_sizes_hayman}
	S_{\neq\mathrm{k}}(x)
	= S(x) \cdot T_1(x),
	\quad \text{where} \quad
	T_1(x) = \exp\bigg\{-\sum_{1\le i\le \ell}c_{k_i}x^{k_i}\bigg\}.
\end{align}
Note that for any $s_n<k<s_n+o(s_n)$ and by plugging in $s_n$ we get analogous to~\eqref{eq:set_max_A_1_1_computation_hayman} (where $\beta_n=\eta_n$ for $\mathsf{F}^{(n)}=\mathsf{S}^{(n)}$) that $c_kz_n^k = h(k)k^{\al-1}\eul^{-\eta_n k} \le  h(s_n)s_n^{\al-1}\eul^{-\eta_n s_n}(1+o(1))\sim \eta_n=o(1)$. Hence 
\begin{align}
	\label{eq:T1(w)_sim_1_hayman}
	T_1(z_n)\sim 1.
\end{align} 
Writing $\mathsf{S}^{(n)} = (\mathsf{S}_1^{(n)},\dots, \mathsf{S}_n^{(n)})$, we conclude
\begin{align}
	\label{eq:set_cluster_as_sum_lower_bound_hayman}
	\pr{\mathsf{S}^{(n)} \in \Omega_{n,\mathrm{k}}} 
	\ge \pr{\forall 1\le i\le \ell: \mathsf{S}^{(n)}_{k_i} = 1}
	= \frac{[x^{n-\Sigma_{\mathrm{k}}}]S_{\neq \mathrm{k}}(x)}{[x^n]S(x)} \cdot \prod_{1\le i\le \ell}c_{k_i}.
\end{align}
We compute
\begin{align}
	\label{eq:set_cluster_lower_bound_1}
	\frac{[x^{n-\Sigma_{\mathrm{k}}}]S_{\neq \mathrm{k}}(x)}{[x^n]S(x)} 
	= \frac{[x^{n-\Sigma_{\mathrm{k}}}]S(x)}{[x^n]S(x)} 
	+ \sum_{1\le u\le n} \frac{[x^{n-\Sigma_{\mathrm{k}} - u}]S(x)}{[x^n]S(x)}
	[x^u]T_1(x).
\end{align}
Analogous to~\eqref{eq:set_cluster_as_sum_upper_bound_hayman} and~\eqref{eq:set_cluster_as_sum_upper_bound_2_hayman} we obtain due to~\eqref{eq:T1(w)_sim_1_hayman}
\begin{align}
	\label{eq:set_cluster_lower_bound_2}
	\sum_{1\le u\le n} \frac{[x^{n-\Sigma_{\mathrm{k}} - u}]S(x)}{[x^n]S(x)}
	[x^u]T_1(x)
	=\mathcal{O}\bigg( z_n^{\Sigma_{\mathrm{k}}} \sum_{u\ge 1}[x^u]T_1(x) z_n^u\bigg)
	=\mathcal{O}\big( z_n^{\Sigma_{\mathrm{k}}} T_1(z_n)\big)
	= o\big(z_n^{\Sigma_{\mathrm{k}}} \big).
\end{align}
Next we show that the first term on the right-hand side of~\eqref{eq:set_cluster_lower_bound_1} is asymtotically equal to $z_n^{\Sigma_{\mathrm{k}}}$. 
Since $\Sigma_{\mathrm{k}} = \mathcal{O}(\eta_n^{-1}\ln C(z_n)) = o(n)$ for $s_n<k_1<\cdots <k_\ell<s_n+o(s_n)$ due to~\eqref{eq:set_max_xi_n_hayman} and~\eqref{eq:s(t,beta)_asymptotic_hayman} we obtain by Lemma~\ref{lem:hayman} as $n\to\infty$
\[
	\frac{[x^{n-\Sigma_{\mathrm{k}}}]S(x)}{[x^n]S(x)} 
	= z_n^{\Sigma_{\mathrm{k}}} 
	\bigg(\exp\bigg\{-\frac{\big(A_1(z_n)-(n-\Sigma_{\mathrm{k}})\big)^2}{2A_2(z_n)}\bigg\} + o(1) \bigg).
\]
Since $A_1(z_n)=n$ we just need to show that $\Sigma_{\mathrm{k}}^2/b(z_n)=o(1)$. We compute, noting that $A_2(z_n) =\Theta(h(\eta_n^{-1})\eta_n^{-(\al+2)})$ according to Lemma~\ref{lem:asymptotics_expansive_sum}, 
\begin{align}
	\label{eq:exp_hayman_o(1)_in_largest_cluster_proof}
	\frac{\Sigma_{\mathrm{k}}^2}{A_2(z_n)}
	= \bigO{\eta_n^{\al}\cdot \ln^2C(z_n)\cdot h(\eta_n^{-1})} 
	= o(1).
\end{align}
Concluding, we obtain by plugging in~\eqref{eq:set_cluster_lower_bound_2} into~\eqref{eq:set_cluster_lower_bound_1} that
\[
	\frac{[x^{n-\Sigma_{\mathrm{k}}}]S_{\neq \mathrm{k}}(x)}{[x^n]S(x)} 
	=(1+o(1)) \cdot z_n^{\Sigma_{\mathrm{k}}},\quad \text{for }s_n<k_1<\cdots<k_\ell<s_n+o(s_n) \text{ and as }n\to\infty,
\]
which in turn brings with~\eqref{eq:set_cluster_as_sum_lower_bound_hayman} that~\eqref{eq:set_cluster_as_sum_lower_bound_final_hayman} is true. This concludes the proof of~\eqref{eq:F_n_k_i_as_F_and_c_i_sim_hayman}. The set case is completed and we move on to the multiset case.

\paragraph{The multiset case.} 
This case is proven almost analogously to the set case, which is why we will be sparing with details.
Like in~\eqref{eq:set_cluster_as_sum_hayman} we get due to~\eqref{eq:distribution_cluster_multiset}
\[
	\pr{\mathsf{G}^{(n)} \in \Omega_{n,\mathrm{k}}}
	=\frac{1}{[x^n]G(x)}\cdot \prod_{1\le i\le \ell}c_{k_i}
	\cdot \sum_{\Omega_{n,\mathrm{k}}} \prod_{i\in[n]_{\neq \mathrm{k}}} \binom{c_i+N_i-1}{N_i} \prod_{1\le i\le\ell} \binom{c_{k_i}+N_{k_i}-1}{N_{k_i}}\frac1{c_{k_i}}.
\]
It is easy to check that $\binom{a+b-1}{b}/a \le \binom{a+b-2}{b-1}$ for $a,b\in\Nat$.
In addition $N_i=0$ for any $i>n-\Sigma_{\mathrm{k}}$ if $(N_1,\dots,N_n)\in\Omega_{n,\mathrm{k}}$. Thus, since $c_{k_i}, N_{k_i}\in\Nat$ for $1\le i\le \ell$ (otherwise the claims~\eqref{eq:F_n_k_i_as_F_and_c_i_sim_hayman}--\eqref{eq:F_n_k_i_as_F_and_c_i_le_hayman} are trivially true) we obtain that
\[
	\pr{\mathsf{G}^{(n)} \in \Omega_{n,\mathrm{k}}}
	\le \frac{[x^{n-\Sigma_{\mathrm{k}}}]G(x)}{[x^n]G(x)} \cdot \prod_{1\le i\le \ell}c_{k_i}.
\]
Let $w_n$ be given as in~\eqref{eq:def_w_beta_hayman}. Replacing $S$ by $G$ and $z_n$ by $w_n$ we obtain completely analogous to~\eqref{eq:set_cluster_as_sum_upper_bound_final_hayman} that for any $\eps>0$ and sufficiently large $n$
\[
	\pr{\mathsf{G}^{(n)} \in \Omega_{n,\mathrm{k}}}
	\le (1+\eps) \prod_{1\le i\le \ell}c_{k_i}w_n^{k_i}, 
	\quad \text{uniformly in }\mathrm{k}\in\Nat^\ell\text{ with }
	0< \Sigma_{\mathrm{k}} \le n,
\]
proving~\eqref{eq:F_n_k_i_as_F_and_c_i_le_hayman}. To finish the proof in the multiset case it suffices to show that
\begin{align}
	\label{eq:multiset_cluster_as_sum_lower_bound_final_hayman}
	\pr{\mathsf{G}^{(n)} \in \Omega_{n,\mathrm{k}}}
	\ge (1+o(1)) \cdot \prod_{1\le i\le \ell}c_{k_i}w_n^{k_i}
	\quad \text{for }s_n <k_1<\cdots k_\ell < s_n + o(s_n)\text{ and as }n\to\infty.
\end{align}
Let $G_{\neq \mathrm{k}}(x)$ be the generating series of elements such that there are no clusters of sizes $k_1,\dots,k_\ell$, that is,
\[
	G_{\neq \mathrm{k}}(x)
	= G(x) \cdot T_2(x), \quad\text{where }T_2(x) = \exp\bigg\{-\sum_{j\ge 1}\sum_{1\le i\le \ell}c_{k_i}x^{jk_i}\bigg\}.
\]
For any $s_n<k<s_n+o(s_n)$ and by plugging in $s_n$ we get analogous to~\eqref{eq:set_max_A_1_1_computation_hayman} that $c_kw_n^k = h(k)k^{\al-1}\eul^{-\beta_n k} \le  h(s_n)s_n^{\al-1}\eul^{-\beta_n s_n}(1+o(1))\sim \beta_n=o(1)$. Hence 
\begin{align}
	\label{eq:T2(w)_sim_1_hayman}
	T_2(w_n)\sim 1.
\end{align} 
Consequently, we are at the exact same starting point as in~\eqref{eq:set_egf_excluded_cluster_sizes_hayman} and~\eqref{eq:T1(w)_sim_1_hayman}. Analogous to~\eqref{eq:set_cluster_as_sum_lower_bound_hayman}--\eqref{eq:set_cluster_lower_bound_2} we thus obtain as $n\to\infty$ and for $s_n<k_1<\cdots<k_\ell<s_n+o(s_n)$ as $n\to\infty$
\begin{align}
	\label{eq:multiset_prob_as_prod_hayman}
	\pr{\mathsf{G}^{(n)}\in\Omega_{n,\mathrm{k}}} 
	\ge \prod_{1\le i\le \ell}c_{k_i} \cdot 
	\bigg( \frac{[x^{n-\Sigma_{\mathrm{k}}}]S(x)}{[x^n]S(x)} + o\big(z_n^{\Sigma_{\mathrm{k}}}\big)
	\bigg).
\end{align}
Recall the definition of $A_{s,t}(x)$ from~\eqref{eq:A_s_t_def_hayman_proof}. The functions $a,b$ from~\eqref{eq:a,b,c_hayman_def} for $f(x)=\ln G(x)$ are then given by $A_{1,1}(x),A_{2,2}(x)$, 
see also~\eqref{eq:tilde_a_b_ell_hayman_proof} with $\ell=0$. Since $\Sigma_{\mathrm{k}} = o(n)$ for $s_n<k_1<\cdots<k_\ell<s_n+o(s_n)$ we obtain by Lemma~\ref{lem:hayman}
\begin{align}
	\label{eq:multiset_hayman_applied}
	\frac{[x^{n-\Sigma_{\mathrm{k}}}]S(x)}{[x^n]S(x)}
	= w_n^{\Sigma_{\mathrm{k}}}
	\bigg( \exp\bigg\{ -\frac{\big(A_{1,1}(w_n)-(n-\Sigma_{\mathrm{k}})\big)^2}{2A_{2,2}(w_n)} \bigg\} +o(1) \bigg)
\end{align}
Start with $0<\rho<1$, i.e. $w_n=z_n$ in~\eqref{eq:def_w_beta_hayman}. Due to~\eqref{eq:A_s_t_0<rho<1_proof_hayman} we have in this setting $ A_{1,1}(z_n) = A_1(z_n) + \bigO{1}$ and $A_{2,2}(z_n)\sim A_2(z_n)$. Hence analogous to~\eqref{eq:exp_hayman_o(1)_in_largest_cluster_proof} we get 
\begin{align}
	\label{eq:set_max_exponent_o(1)_mset_rho<1_hayman}
	\frac{\big(\tilde{a}(z_n)-(n-\Sigma_{\mathrm{k}})\big)^2}{2\tilde{b}(z_n)}
	\sim \frac{\Sigma_{\mathrm{k}}^2}{A_2(z_n)}
	=o(1).
\end{align} 
Now consider $\rho=1$ in which case $w_n=q_n$, chosen such that $A_{1,1}(q_n) = n$. Since $n^{-\delta}\le h(n)\le  n^{\delta}$ for any $0<\delta<\al$ and $n$ sufficiently large due to~\eqref{eq:sv_estimate_hayman} we have that $(c_k)_{k\in\Nat}\in\F(\al-\delta,\al+\delta,\rho)$ for any $0<\delta<\al$. So, we compute with~\eqref{eq:A_s_t_bigO_proof_hayman} that $ A_{2,2}(q_n) = \Omega(\xi_n^{-(\al-\delta+2)})$ giving us with~\eqref{eq:set_max_xi_n_hayman} and~\eqref{eq:s(t,beta)_asymptotic_hayman} that
\begin{align}
	\label{eq:set_max_exponent_o(1)_mset_rho=1_hayman}
	\frac{\big(A_{1,1}(q_n)-(n-\Sigma_{\mathrm{k}})\big)^2}{A_{2,2}(q_n)}
	=\mathcal{O}\bigg( \frac{s_n^2}{A_{2,2}(q_n)} \bigg)
	= \bigO{\ln^2 C(z_n) \xi_n^{\al-\delta}}
	=o(1).
\end{align}
Plugging~\eqref{eq:multiset_hayman_applied}--\eqref{eq:set_max_exponent_o(1)_mset_rho=1_hayman} into~\eqref{eq:multiset_prob_as_prod_hayman} yields~\eqref{eq:multiset_cluster_as_sum_lower_bound_final_hayman} and we are done.
\end{proof}

\begin{proof}[Proof of Example~\ref{example:largest_comp}]
We get by~\cite[Lem.~4.2]{Erlihson2004} that there is a $A:\Real^+\to\Real^+$  such that
\[
	C(z_n) = \Gamma(\al)(\eta_n^{-\al}+A(\al)) + \bigO{\eta_n} \quad\text{and}\quad 
	z_nC'(z_n) = \Gamma(\al+1)(\eta_n^{-(\al+1)}+A(\al+1)) + \bigO{\eta_n}.
\]
This immediately gives us that $z_nC'(z_n)=n$ implies $\eta_n = \Gamma(\al+1)^{1/(\al+1)}n^{-1/(\al+1)} +o(n^{-1})$. Plugging this into $C(z_n)$ yields $C(z_n) =\Gamma(\al)\Gamma(\al+1)^{-\al/(\al+1)}n^{\al/(\al+1)} + \bigO{1}$. Hence, setting $f(n) = (n/\Gamma(\al+1))^{1/(\al+1)}$,
\[
	\ln X = \ln \big( \Gamma(\al)^{-1}C(z_n) (\ln C(z_n))^{\al-1}\big)
	= \al\ln f(n) + (\al-1)\ln\ln f(n) + (\al-1)\ln \al + o(1).
\] 
Next consider $\rho=1$ in the multiset case. We want to compute the solution $q_n=\eul^{-\xi_n}$ to $\sum_{j\ge 1}q_n^jC'(q_n^j)=n$. Clearly $\xi_n\to 0$ as $n\to\infty$. For $j\le \xi_n^{-1}$ we obtain with~\cite[Lem.~4.2]{Erlihson2004} that $q_n^jC'(q_n^j) = \Gamma(\al+1)((j\xi_n)^{-(\al+1)} + A(\al+1)) + \bigO{j\xi_n}$. Hence
\begin{align*}
    \sum_{1\le j\le \xi_n^{-1}}q_n^jC'(q_n^j) 
    &=\Gamma(\al+1)\zeta(\al+1)\xi_n^{-(\al+1)} + \bigO{ \xi_n^{-(\al+1)} \sum_{j>\xi_n^{-1}}j^{-(\al+1)}} + \bigO{\xi_n^{-1}} \\
    &= \Gamma(\al+1)\zeta(\al+1)\xi_n^{-(\al+1)} + \bigO{\xi_n^{-1}}.
\end{align*}
We further get that $C'(q_n^j)=\sum_{k\ge 1}k^\al \eul^{-\xi_nj(k-1)}\le \eul \sum_{k\ge 1}k^\al \eul^{-k}=:B<\infty$. Hence
\[
    \sum_{j>\xi_n^{-1}}q_n^jC'(q_n^j)
    \le B \sum_{j>\xi_n^{-1}} \eul^{-j\xi_n}
    = \bigO{\xi_n^{-1}}.
\]
All in all, we conclude
\begin{align}
    \label{eq:example_eq}
    \sum_{j\ge 1}q_n^jC'(q_n^j)
    = \Gamma(\al+1)\zeta(\al+1)\xi_n^{-(\al+1)}  + \bigO{\xi_n^{-1}}.
\end{align}
Define $\tilde{f}(n) := (n/(\Gamma(\al+1)\zeta(\al+1))^{1/(\al+1)}$. Setting~\eqref{eq:example_eq} equal to $n$ and applying~\cite[Lem.~4.2]{Erlihson2004} yields that
\[
    \xi_n
    = \tilde{f}(n)^{-1} + \bigO{n^{-1}}
    \quad\text{and}\quad
    C(q_n)
    =\Gamma(\al)\tilde{f}(n)^\al + \bigO{1}.
\]
This finally gives us 
\[
    \ln X
    = \ln (\Gamma(\al)^{-1}C(z_n)(\ln C(z_n))^{\al-1})
    = \al \ln \tilde{f}(n) + (\al-1)\ln\ln\tilde{f}(n) + (\al-1)\ln\al + o(1).
\]
\end{proof}

\subsubsection*{Proof of Corollary~\ref{coro:smallest_comp_hayman}}
\label{sec:coro:smallest_comp_hayman}
\begin{proof}[Proof of Corollary~\ref{coro:smallest_comp_hayman}]
 Let $\mathsf{F}^{(n)}$ be either $\mathsf{S}^{(n)}$ or $\mathsf{G}^{(n)}$. Set $F(x)=S(x)$ if $\mathsf{F}^{(n)}=\mathsf{S}^{(n)}$ and $F(x)=G(x)$ if $\mathsf{F}^{(n)}=\mathsf{G}^{(n)}$. Further we define the generating series for all elements such that the smallest object is of size greater than $s$  by
\begin{align*}
	F_{>s}(x)
	:= \begin{dcases*}
	\exp\bigg\{\sum_{k>s}c_kx^k\bigg\}, &$\mathsf{F}^{(n)}=\mathsf{S}^{(n)}$ \\
	\exp\bigg\{\sum_{j\ge 1}\sum_{k>s}c_kx^{jk}/j\bigg\}, &$\mathsf{F}^{(n)}=\mathsf{G}^{(n)}$
	\end{dcases*}.
\end{align*}
Then
\[
	\pr{\mathcal{M}(\mathsf{F}^{(n)})>s}
	=\frac{[x^n]F_{>s}(x)}{[x^n]F(x)}.
\]
Since $(c_k)_{k\in\Nat}\in \F(2\al/3+\eps,\al,\rho)$ we also have that $(c_k)_{k>s}\in \F(2\al/3+\eps,\al,\rho)$ for fixed $s\in\Nat$. Then Lemma~\ref{lem:h_admissible_gen_series} reveals that both $F_{>s}$ and $F$ are $H$-admissible. Letting $a,b$ and $a_{>s},b_{>s}$ be the functions~\eqref{eq:a,b,c_hayman_def} we immediately see that $a(x)-a_{>s}(x)$ and $b(x)-b_{>s}(x)$ are bounded uniformly in $x<\rho$. Then Lemma~\ref{lem:hayman} gives for any $w_n\to\rho$ as $n\to\infty$
\begin{align*}
	\pr{\mathcal{M}(\mathsf{F}^{(n)})>s}
	=\frac{[x^n]F_{>s}(x)}{[x^n]F(x)}
	\sim \frac{F_{>s}(w_n)}{F(w_n)} \bigg( \e{-\frac{(a_{>s}(w_n)-n)^2}{2b_{>s}(w_n)}}
	\e{\frac{(a(w_n)-n)^2}{2b(w_n)}} + o(1)\bigg).
\end{align*}
Choosing $w_n$ as in Theorem~\ref{thm:coeff_eC(x)_C(x)^k} for the different cases depending on $S,G$ and $\rho$ as well as noting again that $a(w_n)-a_{>s}(w_n)=\bigO{1}$ we get that the exponents of the exponential functions in the previous display are $o(1)$ and so (as $s$ is fixed and $w_n\to\rho$)
\begin{align*}
	\pr{\mathcal{M}(\mathsf{F}_n)>s}
	\sim \frac{F_{>s}(w_n)}{F(w_n)}
	\sim \begin{dcases*}
	\exp\bigg\{-\sum_{1\le k\le s}c_k\rho^k\bigg\},&$\mathsf{F}^{(n)}=\mathsf{S}^{(n)}$ \\
	\exp\bigg\{-\sum_{j\ge 1}\sum_{1\le k\le s}c_k\rho^{jk}/j\bigg\},  &$\mathsf{F}^{(n)}=\mathsf{G}^{(n)}$
	\end{dcases*}.
\end{align*}
\end{proof}

\subsubsection{Proofs for the Cluster Distribution}

\subsubsection*{Proof of Corollary~\ref{coro:mean_sec_moment_hayman}} 
\label{sec:coro:mean_sec_moment_hayman}
\begin{proof}[Proof of Corollary~\ref{coro:mean_sec_moment_hayman} (set case).]
Due to~\eqref{eq:number_comp_falling_factorial_hayman} we have that for any $\ell\in\Nat$
\[
	\ex{(\kappa(\mathsf{S}^{(n)}))_\ell}=\frac{[x^n]\e{C(x)}C(x)^\ell}{[x^n]\e{C(x)}}.
\]
An application of Theorem~\ref{thm:coeff_eC(x)_C(x)^k} delivers 
\begin{align}
	\label{eq:falling_factorial_kappa_S_hayman}
    \ex{(\kappa(\mathsf{S}^{(n)}))_\ell}
    \sim C(z_n)^\ell.
\end{align}
In particular $\ex{\kappa(\mathsf{S}^{(n)})} \sim C(z_n)$, which is the starting point for our induction. Assume that $\ex{\kappa(\mathsf{S}^{(n)})^\ell} \sim C(z_n)^\ell$ for $\ell\in\Nat$.
There are constants $(d_1,\dots,d_{\ell})=(d_1(\ell),\dots,d_\ell(\ell)\in\Real^\ell$ such that
\begin{align*}
    \ex{(\kappa(\mathsf{S}^{(n)})_{\ell+1}} 
    &= \ex{\kappa(\mathsf{S}^{(n)})(\kappa(\mathsf{S}^{(n)})-1)\cdots (\kappa(\mathsf{S}^{(n)})-\ell+2)}  \\
    &= \ex{(\kappa(\mathsf{S}^{(n)})^{\ell+1}} + \sum_{1\le i\le \ell}d_i\ex{(\kappa(\mathsf{S}^{(n)})^i}
    \sim \ex{(\kappa(\mathsf{S}^{(n)})^{\ell+1}} + \sum_{1\le i\le \ell}d_iC(z_n)^i,
\end{align*}
where we used the induction hypothesis in the last asymptotic identity of the previous display.
Since~\eqref{eq:falling_factorial_kappa_S_hayman} reveals that $\ex{(\kappa(\mathsf{S}^{(n)})_{\ell+1}} \sim C(z_n)^{\ell+1}$ and $C(z_n)^{\ell+1} = \omega(C(z_n)^i)$ for all $1\le i\le \ell$ the claim follows. 
\end{proof}
\begin{proof}[Proof of Corollary~\ref{coro:mean_sec_moment_hayman} (multiset case).]
Next we want to compute $\ex{\kappa(\mathsf{G}^{(n)})^\ell}$ for $0<\rho<1$. Setting $(x)_0=1$ for any $x\in\Real$, define $B_k(x,y) := \sum_{j\ge 1}(j-1)_{k-1} C(x^j) y^{j-k}$. Then $d/dy G(x,y) = G(x,y)B_1(x,y)$ and $d/dy B_k(x,y) = B_{k+1}(x,y)$ for $k\in\Nat$. By a simple induction there exist real-valued constants $(d_{k_1,\dots,k_{\ell-1}})_{k_1,\dots,k_{\ell-1}\in\Nat_0}=(d_{k_1,\dots,k_{\ell-1}}(\ell))_{k_1,\dots,k_{\ell-1}\in\Nat_0}$ such that
\begin{align}
	\label{eq:derivative_G(x,y)_in_terms_B(x,y)_hayman}
	\frac{d^\ell}{dy^\ell}G(x,y)
	= G(x,y)\bigg( B_1(x,y)^\ell + \sum_{0\le k_1\le \cdots\le k_{\ell-1}\atop k_1+\cdots+k_{\ell-1} = \ell} d_{k_1,\dots,k_{\ell-1}} \prod_{1\le i\le \ell-1} B_{k_i}(x,y)\bigg).
\end{align}
Recall the definition of $A_{s,t}$ from~\eqref{eq:A_s_t_def_hayman_proof}. Clearly, for any $k\in\Nat$, we can rewrite $B_k(x,1)=A_{0,k}(x) + \sum_{1\le i \le k-1}b_iA_{0,i}(x)$ for some real-valued constants $(b_1,\dots, b_{\ell-1})=(b_1(\ell),\dots,b_{\ell-1}(\ell))$. Hence together with~\eqref{eq:derivative_G(x,y)_in_terms_B(x,y)_hayman} there are real-valued constants $(d'_{k_1,\dots,k_{\ell-1}})_{k_1,\dots,k_{\ell-1}\in\Nat_0}=(d'_{k_1,\dots,k_{\ell-1}}(\ell))_{k_1,\dots,k_{\ell-1}\in\Nat_0}$ such that
\begin{align}
	\label{eq:derivative_G(x,1)_in_terms_B(x,1)_hayman}
	\frac{d^\ell}{dy^\ell}G(x,y)\bigg|_{y=1}
	= G(x,y)\bigg( A_{0,1}(x)^\ell + \sum_{0\le k_1\le \cdots\le k_{\ell-1}\atop k_1+\cdots+k_{\ell-1} \le \ell} d'_{k_1,\dots,k_{\ell-1}} \prod_{1\le i\le \ell-1} A_{0,k_i}(x)\bigg).
\end{align}
Now~\eqref{eq:number_comp_falling_factorial_hayman} and Theorem~\ref{thm:coeff_eC(x)_C(x)^k} give us
\[
	\ex{(\kappa(\mathsf{G}^{(n)}))_\ell}
	=\frac{[x^n]d^\ell/(dy^\ell) G(x,y)}{[x^n]G(x,y)}
	\sim  A_{0,1}(z_n)^\ell + \sum_{0\le k_1\le \cdots\le k_{\ell-1}\atop k_1+\cdots+k_{\ell-1} = \ell} d'_{k_1,\dots,k_{\ell-1}} \prod_{1\le i\le \ell-1} A_{0,k_i}(z_n).
\]
Due to~\eqref{eq:A_s_t_0<rho<1_proof_hayman} we get that $A_{0,k}(z_n) \sim A_{0}(z_n)=C(z_n)$. Since for any $0\le k_1\le\cdots \le k_{\ell-1}$ we always have that $\prod_{1\le i\le \ell-1}A_{0,k_i}(z_n)\sim C(z_n)^{\ell'}$ for $\ell'<\ell$ and $C(z_n)^\ell = \omega(C(z_n)^{\ell'})$ we finally obtain that
\[
	\ex{(\kappa(\mathsf{G}^{(n)}))_\ell}
	\sim C(z_n)^\ell.
\]
The claim $\ex{\kappa(\mathsf{G}^{(n)})^\ell}\sim C(z_n)^\ell$ follows analogously to the induction after~\eqref{eq:falling_factorial_kappa_S_hayman}.

Let us now consider $\rho=1$. Here we would also get~\eqref{eq:derivative_G(x,1)_in_terms_B(x,1)_hayman} but we cannot simplify $A_{0,k}(z_n)\sim C(z_n)$ and thereby let all the terms but $A_{0,1}(z_n)^\ell$ asymptotically vanish; in fact, all the terms could (depending on $\al>0$) play a role. This is why we are content with only computing $\ex{\kappa(\mathsf{G}^{(n)})^\ell}$ for $\ell=1,2$ in this case. With~\eqref{eq:number_comp_falling_factorial_hayman} and Theorem~\ref{thm:coeff_eC(x)_C(x)^k} we have that
\[
	\ex{\kappa(\mathsf{G}^{(n)})} = \frac{[x^n]G(x)\sum_{j\ge 1}C(x^j)}{[x^n]G(x)}
	\sim \sum_{j\ge 1}C(q_n^j).
\]
We have that $\ex{(\kappa(\mathsf{G}^{(n)}))_2} = \ex{\kappa(\mathsf{G}^{(n)})^2} - \ex{\kappa(\mathsf{G}^{(n)})}$ such that due to~\eqref{eq:number_comp_falling_factorial_hayman} and Theorem~\ref{thm:coeff_eC(x)_C(x)^k}
\[
	\ex{(\kappa(\mathsf{G}^{(n)}))_2}
	= \frac{[x^n]G(x)\big(\sum_{j\ge1 }C(x^j)\big)^2}{[x^n]G(x)} 
	+ \frac{[x^n]G(x)\sum_{j\ge1 }jC(x^j)}{[x^n]G(x)}
	\sim \bigg(\sum_{j\ge1 }C(q_n^j)\bigg)^2 + \sum_{j\ge1 }jC(q_n^j).
\]
Since obviously $(\sum_{j\ge1 }C(q_n^j))^2 = \omega(\sum_{j\ge1 }C(q_n^j))$ the claim follows.
\end{proof}

\subsubsection*{Proof of Theorems~\ref{thm:LLT_number_comp_hayman} and~\ref{thm:bivariate_counting_set_hayman}} 
\label{sec:thm:bivariate_counting_set_hayman}
The proofs of the part concerning $\mathsf{S}^{(n)}$ in Theorem~\ref{thm:LLT_number_comp_hayman} and of Theorem~\ref{thm:bivariate_counting_set_hayman} are a straightforward consequence of the following local limit theorem, which we state for completeness.
\begin{lemma}[{\cite[Lem~3.7]{Panagiotou2022}}]
\label{lem:enumeration_paper_LLT_lemma_hayman}
Let $(c_k)_{k\in\Nat}$ be expansive. For $\chi>0$ set $q=\rho\eul^{-\chi}$. Let $C_1(\chi), C_2(\chi),\dots$ be iid integer-valued non-negative random variables with probability generating function $C(qx)/C(q)$. Define $S_p(\chi) := \sum_{1\le i\le p}C_i(\chi)$ for $p\in\Nat$ as well as
\[
    \nu_p(\chi) 
    := \ex{S_p(\chi)} 
    = p \frac{qC'(q)}{C(q)} \quad \text{and}\quad 
    \sigma_p(\chi)^2
    := \Var{S_p(\chi)}
	= p \left(\frac{{q}^2C''({q}) + qC'({q})}{C({q})} 
	-\left(\frac{{q}C'({q})}{C({q})}\right)^2\right).
\]
Then for $p=p(\chi)$ such that $p\to\infty$ as $\chi\to 0$ we obtain for $t=o(p^{1/6})$
\[
    \pr{ S_p(\chi) = \nu_p(\chi) + t\sigma_p(\chi) }
    \sim \eul^{-t^2/2} \cdot \frac{1}{\sqrt{2\pi}\sigma_p(\chi)}, \qquad\chi\to0.
\]
\end{lemma}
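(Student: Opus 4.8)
The plan is to prove this as a local central limit theorem for a triangular array, via Fourier inversion on the probability generating function, handled with the same circle-method estimates that underlie the $H$-admissibility arguments of Section~\ref{sec:proof_h_admissibility}. Since $S_p(\chi)$ is $\Nat_0$-valued with probability generating function $(C(qx)/C(q))^p$, one starts from
\[
    \pr{S_p(\chi)=m}
    = \frac1{2\pi}\int_{-\pi}^\pi\left(\frac{C(q\eul^{\iu\theta})}{C(q)}\right)^{\!p}\eul^{-\iu m\theta}\,d\theta .
\]
Writing $\psi(\theta):=\log C(q\eul^{\iu\theta})-\log C(q)$, a direct computation gives $\psi'(0)=\iu\, qC'(q)/C(q)=\iu\,\nu_p(\chi)/p$ and $\psi''(0)=-\sigma_p(\chi)^2/p$, matching the stated centring and scaling. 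By Lemma~\ref{lem:asymptotics_expansive_sum} (equivalently Corollary~\ref{lem:A_s_properties_hayman_proof}) one has $qC'(q)/C(q)=\bigTheta{\chi^{-1}}$ and $\sigma_p(\chi)^2/p=\bigTheta{\chi^{-2}}$, and more generally the $j$-th $\theta$-derivative of $\log C(q\eul^{\iu\theta})$ is $\bigO{\chi^{-j}}$ at $\theta=0$, and — using $\lvert C(q\eul^{\iu\theta})\rvert\sim C(q)$ uniformly on $\lvert\theta\rvert\le\chi^{1+\delta}$, as shown in the proof of Lemma~\ref{lem:h_admissible_gen_series} — also uniformly on that arc.

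For the main contribution I would fix a threshold $\theta_0=\theta_0(\chi,p)$ with $\lvert t\rvert/\sigma_p(\chi)\ll\theta_0\ll\chi p^{-1/3}$; such $\theta_0$ exists because $\sigma_p(\chi)\cdot\chi p^{-1/3}=\bigTheta{p^{1/6}}=\omega(\lvert t\rvert)$ when $t=o(p^{1/6})$. On $\lvert\theta\rvert\le\theta_0$ one expands $p\psi(\theta)=\iu\,\nu_p(\chi)\theta-\tfrac12\sigma_p(\chi)^2\theta^2+R(\theta)$ with $\lvert R(\theta)\rvert=\bigO{p\chi^{-3}\lvert\theta\rvert^3}$; substituting $\theta=s/\sigma_p(\chi)$ and $m=\nu_p(\chi)+t\sigma_p(\chi)$ turns the central part of the integral into $(2\pi\sigma_p(\chi))^{-1}\int_{\lvert s\rvert\le\sigma_p(\chi)\theta_0}\eul^{-\iu ts-s^2/2}\eul^{R(s/\sigma_p(\chi))}\,ds$. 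Completing the square and shifting the contour, $\int_{\Real}\eul^{-\iu ts-s^2/2}\,ds=\sqrt{2\pi}\,\eul^{-t^2/2}$; the truncation at $\lvert s\rvert\le\sigma_p(\chi)\theta_0\gg\lvert t\rvert$ is negligible, and writing $\eul^{R}-1\approx \tfrac{p\psi'''(0)}{6\sigma_p(\chi)^3}s^3+\dots$ the leading (cubic Edgeworth) correction contributes $\bigO{\lvert t\rvert^3p^{-1/2}}\eul^{-t^2/2}$, which is $o(\eul^{-t^2/2})$ exactly when $t=o(p^{1/6})$ — this is precisely where the hypothesis, and the exponent $1/6$, enter. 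Hence the central part equals $(1+o(1))\,\eul^{-t^2/2}/(\sqrt{2\pi}\,\sigma_p(\chi))$, as claimed.

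The remaining step, and the one I expect to be the main obstacle, is the tail bound $\int_{\theta_0\le\lvert\theta\rvert\le\pi}\lvert C(q\eul^{\iu\theta})/C(q)\rvert^{p}\,d\theta=o(1/\sigma_p(\chi))$. For $\theta_0\le\lvert\theta\rvert\le\chi^{1+\delta}$ the quadratic term dominates (as $\delta<\al/2$), so $\lvert C(q\eul^{\iu\theta})/C(q)\rvert=\eul^{\Re\psi(\theta)}\le\eul^{-c\sigma_p(\chi)^2\theta^2/p}$ and the integrand is at most $\eul^{-c(\sigma_p(\chi)\theta_0)^2}$, smaller than any power of $\chi^{-1}$. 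For $\chi^{1+\delta}\le\lvert\theta\rvert\le\pi$ one reuses the estimate behind the decay condition~\ref{item:hayman_decay} in the proof of Lemma~\ref{lem:h_admissible_gen_series}, namely that $\sum_{k\ge1}c_kq^k\sin^2(\theta k/2)$ is bounded below by a positive power of $\chi^{-1}$ uniformly on this range; the delicate point is that this controls $C(q)-\Re C(q\eul^{\iu\theta})$, whereas here one needs the modulus, so one must pass through $C(q)-\lvert C(q\eul^{\iu\theta})\rvert=2\inf_{\varphi}\sum_{k\ge1}c_kq^k\sin^2((k\theta+\varphi)/2)$ and argue that for $\lvert\theta\rvert\ge\chi^{1+\delta}$ the progression $(k\theta+\varphi)_k$ equidistributes against the weights $c_kq^k$, forcing this infimum to be a constant multiple of $C(q)$ and hence $\lvert C(q\eul^{\iu\theta})/C(q)\rvert\le 1-c''$ there. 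Raising to the power $p$ and combining the two ranges shows this part of the integral is $o(1/\sigma_p(\chi))$ (using $\sigma_p(\chi)=\bigTheta{\sqrt p\,\chi^{-1}}$), and assembling it with the central estimate completes the proof.
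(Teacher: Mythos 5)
This lemma is quoted verbatim from~\cite[Lem.~3.7]{Panagiotou2022}; the present paper does not prove it, so there is no in-paper proof to compare against. Judged on its own terms, your plan — Fourier inversion of the probability generating function, a central arc handled by a Gaussian/Edgeworth expansion, and a tail estimate reusing the circle-method bounds behind the $H$-admissibility of $S$ — is the standard route to a local CLT for such triangular arrays, and the central-arc analysis is essentially correct: the identification $\psi'(0)=\iu\nu_p/p$, $\psi''(0)=-\sigma_p^2/p$, the choice $\lvert t\rvert/\sigma_p\ll\theta_0\ll\chi p^{-1/3}$, and the observation that the cubic Edgeworth correction contributes $\bigO{\lvert t\rvert^3 p^{-1/2}}\eul^{-t^2/2}$ and so forces exactly $t=o(p^{1/6})$, all check out.

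The genuine gap is in the tail. You claim that for $\chi^{1+\delta}\le\lvert\theta\rvert\le\pi$ the equidistribution of $(k\theta+\varphi)_k$ against the weights $c_kq^k$ forces $\inf_\varphi\sum c_kq^k\sin^2((k\theta+\varphi)/2)$ to be a constant multiple of $C(q)$, hence $\lvert C(q\eul^{\iu\theta})/C(q)\rvert\le 1-c''$ uniformly there. That is false near the lower end of the range. For $\theta\in[\chi^{1+\delta},\chi]$ one may take $\varphi\approx\theta\chi^{-1}$ (centring the phase near the bulk $k\asymp\chi^{-1}$ of the measure $c_kq^k$), and the infimum is then of order $\theta^2\cdot\sum c_kq^k(k-\chi^{-1})^2=\Theta\big(\theta^2 h(\chi^{-1})\chi^{-(\al+2)}\big)$, which is $o(C(q))$ in this range — consistent with the quadratic behaviour $\Re\psi(\theta)\sim -\tfrac12(\sigma_p^2/p)\theta^2$, not with a uniform constant $c''$. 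In other words, the ratio $\lvert C(q\eul^{\iu\theta})/C(q)\rvert$ tends to $1$ as $\chi\to 0$ for $\theta=\chi^{1+\delta}$. Note also that your justification of the third-derivative bound (via $\lvert C(q\eul^{\iu\theta})\rvert\sim C(q)$) was only established on $\lvert\theta\rvert\le\chi^{1+\delta}$, so the quadratic-dominance estimate you use on $[\theta_0,\chi^{1+\delta}]$ does not automatically extend further, and the two regimes fail to meet.

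To close the gap, split the tail at $\epsilon\chi$ for a small fixed $\epsilon>0$ rather than at $\chi^{1+\delta}$. On $[\theta_0,\epsilon\chi]$ one can show $C(q)-\lvert C(q\eul^{\iu\theta})\rvert\ge c\,\theta^2\chi^{-2}C(q)$ directly from the $\inf_\varphi$ representation (the quadratic lower bound just described), giving $\lvert C(q\eul^{\iu\theta})/C(q)\rvert^p\le\eul^{-c\sigma_p^2\theta^2}$ whose integral over $\lvert\theta\rvert\ge\theta_0$ is $o(1/\sigma_p)$ once $\sigma_p\theta_0\to\infty$. On $[\epsilon\chi,\pi]$ the measure $c_kq^k/C(q)$ genuinely spreads over $\Theta(\chi^{-1})$ residues modulo $2\pi/\theta$, and there one does obtain $\lvert C(q\eul^{\iu\theta})/C(q)\rvert\le 1-c''(\epsilon)$ uniformly in $\chi$; this is also where the aperiodicity of the support (guaranteed by $h$ being eventually positive) is used. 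One should also be aware that bounding the $[\epsilon\chi,\pi]$ contribution by $(1-c'')^p=o(\chi/\sqrt p)$ tacitly requires $p\gg\ln\chi^{-1}$; this holds in every application in the paper, but it is not literally implied by ``$p\to\infty$ as $\chi\to 0$'', and a fully self-contained proof would need to make this explicit or sharpen the estimate.
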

Since Theorem~\ref{thm:bivariate_counting_set_hayman} will be needed for the proof of the set case in Theorem~\ref{thm:LLT_number_comp_hayman} we start in anti-chronological order.
\begin{proof}[Proof of Theorem~\ref{thm:bivariate_counting_set_hayman}]
Write $r_n=\rho\eul^{-\varphi_n}$. Let $S_p=S_p(\varphi_n)$, $\nu_p=\nu_p(\varphi_n)$ and $\sigma_p^2=\sigma_p(\varphi_n)^2$ be all defined as in Lemma~\ref{lem:enumeration_paper_LLT_lemma_hayman} for $p\in\Nat$. Recalling $r_nC'(r_n)/C(r_n)=n/N$ we obtain that $\nu_{N}= n$. Summarizing,
\[
    [x^ny^N]S(x,y)
    =\frac{1}{N!}[x^n]C(x)^N 
    = \frac{r_n^{-n}C(r_n)^{N}}{N!}\cdot \pr{ S_{N} = n}
    = \frac{r_n^{-n}C(r_n)^{N}}{N!}\cdot \pr{ S_{N} = \nu_{N}}. 
\]
Since $r_nC'(r_n)/C(r_n)=n/N\to\infty$ we necessarily have that $\varphi_n\to0$ as $n\to\infty$ so that we are allowed to apply Lemma~\ref{lem:enumeration_paper_LLT_lemma_hayman} and obtain
\[ 
     \pr{ S_{N} = \nu_{N}} 
     \sim \frac{1}{\sqrt{2\pi }\sigma_N}.
\]
Making use of Lemma~\ref{lem:asymptotics_expansive_sum} we finish the proof by computing 
\[
	\sigma_N^2
	= \frac{N}{C(r_n)} \big(r_nC''(r_n)+r_nC'(r_n) - (r_nC'(r_n))^2/C(r_n)\big)
	\sim N\frac{r_n^2C''(r_n)}{(\al+1)C(r_n)}.
\]
\end{proof}

\begin{proof}[Proof of Theorem~\ref{thm:LLT_number_comp_hayman} (set case).]
Here we are going to prove the local limit theorem for $\kappa(\mathsf{S}^{(n)})$. Set $N'=N'(n,t):=C(z_n) + L$ where $L = \floor{C(z_n) + t\sqrt{C(z_n)/(\al+1)}}-C(z_n) = t\sqrt{C(z_n)/(\al+1)} + \bigO{1}$. We want to determine $\pr{\kappa(\mathsf{S}^{(n)}) = N'} = [x^ny^{N'}]S(x,y)/[x^n]S(x)$. Let $z_n= \rho\eul^{-\eta_n}$ be such that $z_nC'(z_n)=n$ (implying that $\eta_n\to0$ as $n\to\infty$). Let $S_p=S_p(\eta_n),\nu_p=\nu_p(\eta_n)$ and $\sigma_p=\sigma_p(\eta_n)$ be as in Lemma~\ref{lem:enumeration_paper_LLT_lemma_hayman} for $p\in\Nat$. Then we obtain
\begin{align}
	\label{eq:coeff_S(x,y)_in_LLT_proof_hayman}
	[x^ny^{N'}]S(x,y) = \frac{z_n^{-n}C(z_n)^{N'}}{N'!}  \pr{S_{N'} = n}.
\end{align}
We have $\nu_{N'} = (C(z_n) + L) z_nC'(z_n)/C(z_n) = n + Lz_nC'(z_n)/C(z_n)$. Further as $N'\sim C(z_n)$ we get with Lemma~\ref{lem:asymptotics_expansive_sum} $Lz_nC'(z_n)/C(z_n)\sim t\cdot\sqrt{\al/(\al+1)}\cdot \sqrt{z_n^2C''(z_n)/(\al+1)} \sim t \cdot \sqrt{\al/(\al+1)}\cdot \sigma_{N'}$. Hence Lemma~\ref{lem:enumeration_paper_LLT_lemma_hayman} delivers for any $K>0$ and uniformly in $t\in[-K,K]$
\[
	\pr{S_{N'} = n}
	= \pr{S_{N'} = \nu_{N'} - (t\sqrt{\al/(\al+1)}+o(1))\sigma_{N'}}
	\sim \eul^{-t^2\al/(2(\al+1))} \frac{1}{\sqrt{2\pi z_n^2C''(z_n)/(\al+1)}}.
\]
We treat the remaining terms in~\eqref{eq:coeff_S(x,y)_in_LLT_proof_hayman} by Stirling's formula and using that $(1+a)^b=\e{b\ln(1+a)}=\e{b(a-a^2/2+a^3/3-\cdots)}$ for $b>0,0<a<1$ which gives us
\[
	\frac{C(z_n)^{N'}}{N'!}
	\sim \frac{\eul^{N'}}{\sqrt{2\pi C(z_n)}} \bigg(1+ \frac{L}{C(z_n)}\bigg)^{-N'}
	\sim \frac{\eul^{N'}}{\sqrt{2\pi C(z_n)}} \eul^{-L-t^2/(2(\al+1))}.
\]
Plugging everything back together yields
\[
	[x^ny^{N'}]S(x,y)
	\sim \eul^{-t^2/2} \frac{\e{C(z_n)}}{2\pi\sqrt{C(z_n)z_n^2C''(z_n)/(\al+1)}}\cdot z_n^{-n}.
\]
The claim follows by computing $[x^n]S(x) \sim \e{C(z_n)}/\sqrt{2\pi z_n^2C''(z_n)}\cdot z_n^{-n}$ due to Theorem~\ref{thm:coeff_eC(x)_C(x)^k} and dividing $[x^ny^{N'}]S(x,y)/[x^n]S(x)$.
\end{proof}
\begin{proof}[Proof of Theorem~\ref{thm:LLT_number_comp_hayman} (multiset case).]
Next we show the local limit theorem for $\kappa(\mathsf{G}^{(n)})$.
We write $N' =N'(n,t) := C(z_n) + L$ where $L=\floor{C(z_n) +t\sqrt{C(z_n)/(\al+1)}} -C(z_n) = t\sqrt{C(z_n)/(\al+1)} + \bigO{1}$.
 In what follows we want to determine the probability $\pr{\kappa(\mathsf{G}^{(n)})=N'} =[x^ny^{N'}]G(x,y)/[x^n]G(x)$. For that we need to repeat some notation from~\cite{Panagiotou2022}. Let $m$ be the first index such that $c_m>0$. For $n,N\in\Nat$ let $x,y$ be the solution to the system of equations 
\begin{align}
\label{eq:saddle_point_equation_bivariate_hayman}
	xyC'(x) + mc_m\frac{x^my}{1-x^my}=n, \quad yC(x) + c_m\frac{x^my}{1-x^my}=N
	\quad\text{and}\quad x^my<1.
\end{align}
Further let $u$ the solution to the system in the variable $v$
\begin{align}
\label{eq:def_slowly_g_hayman}
	u h(u)^{1/(\al+1)} = v^{1/(\al+1)}.
\end{align}
Then~\cite[Lem. 2.1]{Panagiotou2022} says that for $n,N,n-mN$ and $v$ sufficiently large there are unique solutions $x_{n,N},y_{n,N}$ and $u_v$ solving~\eqref{eq:saddle_point_equation_bivariate_hayman} and~\eqref{eq:def_slowly_g_hayman}, respectively. In particular, there is a slowly varying function $g:\Real^+\to\Real^+$ such that $u_v = v^{1/(\al+1)}/g(v)$. With this at hand, define
\[
	N_n^*
	=C_0\cdot g(n) \cdot n^{\al/(\al+1)},
	\quad\text{where } C_0
	:=\al^{-1}(\rho^{-m}\Gamma(\al+1))^{1/(\al+1)}.
\]
Depending on $\limsup N/N_n^*<1$ or $\liminf N/N_n^*>1$ there is a phase transition at which $[x^ny^N]G(x,y)$ switches its asymptotic behaviour, see~\cite[Thm.~1(I),~1(II)]{Panagiotou2022}. First we are going to show that 
\begin{align}
	\label{eq:C(z_n)_with_slowly_varying_g_hayman}
	C(z_n)
	\sim \al^{-1}\Gamma(\al+1)^{1/(\al+1)} \cdot g(n) \cdot n^{\al/(\al+1)}.
\end{align}
This implies that $C(z_n)/N_n^* \sim \rho^{m/(\al+1)}<1$ so that we are able to use all the results for Case~$(I)$ in~\cite{Panagiotou2022}, in particular Theorem~$1(I)$ for the determination of $[x^ny^{N'}]G(x,y)$. Since $z_nC'(z_n) = n$ we necessarily have that $z_n=\rho\eul^{-\eta_n}$ such that $\eta_n\to 0$ as $n\to\infty$ due to Lemma~\ref{lem:asymptotics_expansive_sum}, which also gives us $z_nC'(z_n) \sim \Gamma(\al+1)h(\eta_n^{-1})\eta_n^{-(\al+1)}$. Hence
\[
	\eta_n^{-1} h(\eta_n^{-1})^{1/(\al+1)}
	= (n/\Gamma(\al+1))^{1/(\al+1)}(1+o(1)).
\]
Since $n/\Gamma(\al+1)\to\infty$ it directly follows from~\eqref{eq:def_slowly_g_hayman} and the subsequent text that $\eta_n^{-1} \sim (n/\Gamma(\al+1))^{1/(\al+1)}/g(n)$. We also deduce that $g(n)^{\al+1}\sim h(\eta_n^{-1})$ by comparing the two representations of $\eta_n^{-1}$. Hence Lemma~\ref{lem:asymptotics_expansive_sum} yields
\[
	C(z_n)
	\sim \Gamma(\al)h(\eta_n^{-1})\eta_n^{-\al} 
	\sim \al^{-1}\Gamma(\al+1)^{1/(\al+1)}\cdot g(n) \cdot n^{\al/(\al+1)}.
\]
Consequently $\limsup N'/N_n^* < 1$ and $N'/N_n^*=\Theta(1)$. Let $(x_n,y_n)=(x_{n,N'},y_{n,N'})$ be the solution to~\eqref{eq:saddle_point_equation_bivariate_hayman}. Then~\cite[Thm.~1(I)]{Panagiotou2022} together with Theorem~\ref{thm:coeff_eC(x)_C(x)^k} reveal that
\begin{align}
	\label{eq:g_nN'_div_by_g_n_hayman}
	\frac{g_{n,N'}}{g_n}
	\sim \frac{\e{\sum_{j\ge 2}C(\rho^j)y_n^j/j}}{\e{\sum_{j\ge2}C(\rho^j)/j}}
	\cdot \frac{\e{y_nC(x_n) - C(z_n)}}{\sqrt{2\pi N'y_nx_n^2C''(x_n)/((\al+1)z_n^2C''(z_n)) }} \cdot \left(\frac{z_n}{x_n}\right)^n \cdot y_n^{-N'}.
\end{align}
In the remaining proof we show that $z_n/x_n$ and $y_n$ are so close to $1$ that the right-hand side of~\eqref{eq:g_nN'_div_by_g_n_hayman} is asymptotically $(2\pi N' /(\al+1))^{-1/2} \eul^{-t^2/2}$. For that we first repeat some important properties of $(x_n,y_n)$ from~\cite[Lem.~4.1]{Panagiotou2022}, that is,
\begin{align}
	\label{eq:S=O(1)_hayman}
	x_n \sim \rho,\quad
	\limsup_{n\to\infty} y_n < \rho^{-m}
	\quad\text{and}\quad 
	S_n:=\frac{x_n^my_n}{1-x_n^my_n} = \Theta(1).
\end{align}
Parametrize $x_n = z_n \eul^{\delta_n}$ for an appropriate $\delta_n$. We first show that 
\begin{align}
	\label{eq:delta_n_o_eta_n_hayman}
	\delta_n=o(\eta_n).
\end{align}
By~\eqref{eq:saddle_point_equation_bivariate_hayman} and~\eqref{eq:S=O(1)_hayman} we have that $y_nC(x_n) \sim C(z_n) + L \sim C(z_n)$ and $x_ny_nC'(x_n)\sim n = z_nC'(z_n)$. Plugging in $x_n=z_n\eul^{\delta_n} = \rho\eul^{-(\eta_n-\delta_n)}$ we obtain by Lemma~\ref{lem:asymptotics_expansive_sum} 
\[
	C(z_n) \sim 
	y_nC(x_n) 
	\sim y_n \Gamma(\al)h(\eta_n-\delta_n)(\eta_n-\delta_n)^{-\al}
	\sim y_n C(z_n) \frac{h(\eta_n-\delta_n)}{h(\eta_n)} \frac{(\eta_n-\delta_n)^{-\al}}{\eta_n^{-\al}}
\]
implying that
\begin{align}
	\label{eq:N'_sim_1_hayman}
	y_n \frac{h(\eta_n-\delta_n)}{h(\eta_n)} \frac{(\eta_n-\delta_n)^{-\al}}{\eta_n^{-\al}} \sim 1.
\end{align}
Analogously
\[
	n \sim x_ny_nC'(x_n)
	\sim y_n z_nC'(z_n) \frac{h(\eta_n-\delta_n)}{h(\eta_n)} \frac{(\eta_n-\delta_n)^{-(\al+1)}}{\eta_n^{-(\al+1)}}
\]
implying that
\begin{align}
	\label{eq:n_sim_1_hayman}
	y_n \frac{h(\eta_n-\delta_n)}{h(\eta_n)} \frac{(\eta_n-\delta_n)^{-(\al+1)}}{\eta_n^{-(\al+1)}} \sim 1.
\end{align}
Combining~\eqref{eq:N'_sim_1_hayman} and~\eqref{eq:n_sim_1_hayman} we obtain that $(\eta_n-\delta_n)/\eta_n \sim 1$ implying~\eqref{eq:delta_n_o_eta_n_hayman}. 

In what follows we use, without mentioning it every time, that $\delta_n=o(\eta_n)$ and Lemma~\ref{lem:asymptotics_expansive_sum} imply
\[
	z_n^kC^{(k)}(z_n)\delta_n 
	= \Theta(h(\eta_n^{-1})\eta_n^{-(\al+k)} \delta_n)
	=o(h(\eta_n^{-1})\eta_n^{-(\al+k-1)})
	=o(C^{(k-1)}(z_n)),
	\quad k\in\Nat.
\]
Next we expand $C(z_n\eul^{\delta})$ at $\delta=0$. Since $\delta_n=o(\eta_n)$ and $y_n=\bigO{1}$ we obtain that
\begin{align}
	\label{eq:y_nC(x_n)_expanded_more_orders_hayman}
	y_nC(x_n)
	&= y_nC(z_n) + y_n z_nC'(z_n) \delta_n +  y_nz_n^2C''(z_n)\delta_n^2/2 + o(C''(z_n)\delta_n^2) \\ \nonumber
	&=y_nC(z_n) + y_n z_nC'(z_n) \delta_n+	o(y_nC'(z_n)\delta_n).
\end{align}
Then the second identity in~\eqref{eq:saddle_point_equation_bivariate_hayman} gives
\[
	N'
	= y_nC(z_n) + y_nz_nC'(z_n)\delta_n + c_mS_n + o(C'(z_n)\delta_n) . 
\]
Recalling that $N'=C(z_n)+L$ and dividing both sides by $C(z_n)$ entails that $y_n\sim 1$ and
\begin{align}
	\label{eq:y_n_second_order_hayman}
	y_n  = 1 +\frac{L - c_m S}{C(z_n)} - \frac{z_nC'(z_n)}{C(z_n)}\delta_n + o\bigg(\frac{C'(z_n)}{C(z_n)}\delta_n\bigg).
\end{align}
We proceed similarly with the second identity in~\eqref{eq:saddle_point_equation_bivariate_hayman}. Expanding $z_n\eul^{\delta}y_nC'(z_n\eul^{\delta})$ around $\delta=0$ and using~\eqref{eq:delta_n_o_eta_n_hayman} yields
\begin{align}
	\nonumber
	x_ny_nC'(x_n)
	&= y_nz_nC'(z_n) + y_n(z_nC'(z_n) + z_n^2C''(z_n))\delta_n + \bigO{C'''(z_n)\delta_n^2} \\
	\label{eq:x_ny_nC'(x_n)_as_z_n_hayman}
	&=y_n n + y_n z_nC''(z_n) \delta_n + o(C''(z_n)\delta_n).
\end{align}
Note that Lemma~\ref{lem:asymptotics_expansive_sum} implies that $C''(z_n)=\Theta(C'(z_n)^2/C(z_n)) = \Theta(nC'(z_n)/C(z_n))$. Keeping this in mind, we plug in
\eqref{eq:x_ny_nC'(x_n)_as_z_n_hayman} and $y_n$ from~\eqref{eq:y_n_second_order_hayman} into the first equation of~\eqref{eq:saddle_point_equation_bivariate_hayman} to obtain
\begin{align*}
	n
	&= y_n n + y_nz_n^2C''(z_n)\delta_n + o(C''(z_n)\delta_n) + mc_mS_n \\
	&= n + n\frac{L-c_mS}{C(z_n)} +\delta_n\bigg( z_n^2C''(z_n)- n\frac{z_nC'(z_n)}{C(z_n)}\bigg) + o(C''(z_n)\delta_n) + mc_mS_n.
\end{align*}
Since $S_n=\Theta(1)$ due to~\eqref{eq:S=O(1)_hayman} this implies together with Lemma~\ref{lem:asymptotics_expansive_sum} that
\[
	\delta_n \sim -n\frac{L-c_mS}{C(z_n)} \bigg(z_n^2C''(z_n)-n\frac{z_nC'(z_n)}{C(z_n)}\bigg)^{-1} 
	\sim - \al\frac{L-c_mS}{ n}.
\]
This, in turn, implies with~\eqref{eq:y_n_second_order_hayman} that
\[
	y_n
	= 1 + (\al+1)\frac{L-c_mS_n}{C(z_n)} + o\bigg( \frac{L-c_mS_n}{C(z_n)}\bigg).
\]
It follows that there are $\al_n=o(L/n)$ and $\beta_n=o(L/C(z_n))$ such that
\begin{align}
	\label{eq:delta_y_n_hayman}
	\delta_n
	= - \al\frac{L-c_mS_n}{n} + \al_n \quad\text{and}\quad
	y_n
	= 1 + (\al+1)\frac{L-c_mS_n}{C(z_n)} + \beta_n.
\end{align}
Recall that $L\sim t\sqrt{C(z_n)/(\al+1)}$ and $S_n=\Theta(1)$ due to~\eqref{eq:S=O(1)_hayman}. Plugging this and the expressions for $\delta_n,y_n$ into~\eqref{eq:y_nC(x_n)_expanded_more_orders_hayman} as well as using Lemma~\ref{lem:asymptotics_expansive_sum} for $C(z_n),z_nC'(z_n)=n,z_n^2C''(z_n)$ yields
\begin{align*}
	y_nC(x_n)
	&=
	y_nC(z_n) + y_n z_nC'(z_n) \delta_n +  y_nz_n^2C''(z_n)\delta_n^2/2 + o(C''(z_n)\delta_n^2) \\
	&=C(z_n) +(\al+1)(L-c_mS_n) + \beta_nC(z_n) + 
	-\al(L-c_mS) + \al_n n \\ 
	&\quad-\al(\al+1)\frac{L^2}{C(z_n)} 
	+  z_n^2C''(z_n)\frac{\al^2}{2}\frac{L^2}{n^2}
	+o(1) \\
	& = C(z_n) + L - c_mS_n + \beta_n C(z_n) + \al_n n - \frac{\al}{2}t^2 + o(1).
\end{align*}
Plugging this into the second identity of~\eqref{eq:saddle_point_equation_bivariate_hayman} entails
\begin{align}
	\label{eq:beta_al_n_identity_hayman}
	N' = y_nC(x_n) + c_mS_n 
	\Rightarrow 
	\beta_n C(z_n) + \al_n n = \frac{\al}{2}t^2 + o(1).
\end{align}
With~\eqref{eq:delta_y_n_hayman} at hand we obtain that
\[
	\bigg(\frac{z_n}{x_n}\bigg)^n
	=\eul^{-\delta_n n}
	=\eul^{\al (L-c_m S_n) - \al_n n}
\]
and
\[
	y_n^{-N'}
	\sim \bigg(1 + (\al+1)\frac{L-c_mS_n}{C(z_n)}+\beta_n\bigg)^{-C(z_n)+t\sqrt{C(z_n)/(\al+1)}}
	\sim \eul^{-(\al+1)(L-c_mS_n) - t^2 - \beta_n C(z_n) + (\al+1)t^2/2}.
\]
Combining the previous two displays with~\eqref{eq:beta_al_n_identity_hayman} delivers
\[
	\bigg(\frac{z_n}{x_n}\bigg)^n
	y_n^{-N'} 
	= \eul^{\al (L-c_m S_n) - \al_n n-(\al+1)(L-c_mS_n) - t^2 - \beta_n C(z_n) + (\al+1)t^2/2}
	\sim \eul^{L-c_mS_n - t^2/2}.
\]
From $N'=y_nC(x_n) + c_mS_n$ we directly get $y_nC(x_n)-C(z_n) = L-c_mS_n$ so that
\[
	\e{y_nC(x_n)-C(z_n)}\cdot \bigg(\frac{z_n}{x_n}\bigg)^n\cdot y_n^{-N'}
	\sim \eul^{-t^2/2}.
\]
Since $\delta_n=o(\eta_n)$ as showed in~\eqref{eq:delta_n_o_eta_n_hayman} we obtain that $z_n^2C''(z_n)\sim x_n^2C''(x_n)$. Concluding, and plugging in $y_n\sim 1$, we obtain in~\eqref{eq:g_nN'_div_by_g_n_hayman}
\[
	\frac{g_{n,N'}}{g_n}
	\sim \frac{1}{\sqrt{2\pi N'/(\al+1)}} \eul^{-t^2/2}
	\sim \frac{1}{\sqrt{2\pi C(z_n)/(\al+1)}} \eul^{-t^2/2}
\]
as claimed.
\end{proof}

\bibliographystyle{abbrvnat}
\bibliography{references}

\end{document}